\numberwithin{equation}{section}
\newtheorem{theorem}{Theorem}[section]
\newtheorem{definition}[theorem]{Definition}
\newtheorem{proposition}[theorem]{Proposition}
\newtheorem{corollary}[theorem]{Corollary}
\newtheorem{lemma}[theorem]{Lemma}
\newtheorem{remark}[theorem]{Remark}
\newcommand{\PSL}{\rm PSL}
\newcommand{\SU}{{\rm SU}}
\newcommand{\PSU}{{\rm PSU}}
\newcommand{\SL}{{\rm SL}}
\newcommand{\Aut}{{\rm Aut}}
\newcommand{\supp}{{\rm supp}}
\newcommand{\diff}{{\rm d}}
\newcommand{\omegaFS}{\omega_{\text{FS}}}
\newcommand{\del}{\partial}
\newcommand{\dist}{\mathop{\mathrm{dist}}\nolimits}
\newcommand{\ddc}{{\rm dd^c}}
\newcommand{\id}{{\rm id}}
\newcommand{\B}{\mathbb{B}}
\newcommand{\DD}{\mathbb{D}}
\newcommand{\C}{\mathbb{C}}
\newcommand{\N}{\mathbb{N}}
\newcommand{\R}{\mathbb{R}}
\renewcommand\P{\mathbb{P}}
\newcommand{\E}{\mathbf{E}}
\newcommand{\lp}{\langle}
\newcommand{\rp}{\rangle}
\newcommand{\Fix}{\mathrm{Fix}}
\newcommand{\Tr}{\mathrm{Tr}}
\newcommand{\Lip}{{\rm Lip}}
\newcommand{\area}{{\rm area}}
\title{Random products of matrices: a dynamical point of view}
\author{Tien-Cuong Dinh, Lucas Kaufmann and Hao Wu}
\address{Department of Mathematics,  National University of Singapore - 10, Lower Kent Ridge Road - Singapore 119076}
\email{matdtc@nus.edu.sg; lucaskaufmann@nus.edu.sg; e0011551@u.nus.edu}
\date{}
\begin{document}

\begin{abstract}
We study random products of matrices in $\SL_2(\C)$ from the point of view of holomorphic dynamics. For non-elementary measures with finite first moment we obtain the exponential convergence towards the stationary measure in Sobolev norm. As a consequence we obtain the exponentially fast equidistribution of forward images of points towards the stationary measure. We also give a new proof of the Central Limit Theorem for the norm cocycle under a second moment condition, originally due to Benoist-Quint, and obtain some general regularity results for stationary measures.
\end{abstract}

\maketitle

\tableofcontents

%

\section{Introduction and main results}

Let $G$ be the group $\SL_2(\C)$ of complex $2 \times 2$ matrices with determinant one and let $\mu$ be a probability measure on $G$. It is a classical problem to study random products of the form $g_n \cdots g_1$ where the $g_i$ are independent and identically distributed (i.i.d.) matrices with law $\mu$. This is a very rich theory with many beautiful results. A standard reference is the book \cite{bougerol-lacroix}. For a more recent account that deals with more general Lie group actions, the reader may also consult \cite{benoist-quint:book}. The goal of this paper is to revisit this problem using  the point of view of holomorphic dynamics. This is inspired by our recent work \cite{DKW:correspondences}. We hope that our methods can be applied in higher dimensions and can give a simplified treatment of known results.
\vskip5pt
The group $G$ acts naturally on the complex projective line $\P^1$. In the standard affine coordinate of $\P^1 = \C \cup \{\infty\}$ a matrix $\left( \begin{smallmatrix} a & b \\ c & d \end{smallmatrix} \right)$ acts  via the M\"obius transformation $z \mapsto \frac{az+b}{cz+d}$. This allows us to identify the group $\Aut(\P^1)$  of holomorphic automorphisms of  $\P^1$ with the group $ \PSL_2(\C)= \SL_2(\C) \slash \{\pm \text{Id} \}$. In what follows we will also denote this group by $G$ and we keep denoting by $\mu$ the probability measure induced on $\PSL_2(\C)$ by the measure $\mu$ on $\SL_2(\C)$. This shouldn't cause any confusion.

The probability measure $\mu$ defines a positive closed $(1,1)$-current on $\P^1 \times \P^1$ given by  $$[\Gamma_\mu]:= \int_G [\Gamma_g]\, \diff \mu(g),$$
where $[\Gamma_g]$ is the current of integration along the graph $\Gamma_g$ of $g \in \Aut(\P^1)$. The reader may consult \cite{demailly:agbook} and \cite{dinh-sibony:cime} for background material on currents on complex manifolds.

The current $[\Gamma_\mu]$ can be seen as the graph of a generalized correspondence, which we will denote by $f_\mu$. When the support of $\mu$ is finite $f_\mu$ is a correspondence in the usual sense, that is, $[\Gamma_\mu]$ is an effective one-dimensional cycle on $\P^1 \times \P^1$. In this case $f_\mu$ can be seen as a multivalued holomorphic map.

 This generalized correspondence acts on a current $T$ on $\P^1$ (e.g.\ a continuous function, a positive measure or a differential form) by the formula $$f^*_\mu (T) := \int_G g^*T \, \diff \mu(g),$$ or equivalently $f^*_\mu (T) = (\pi_1)_*(\pi_2^*(T) \wedge [\Gamma_\mu])$. We can also define $(f_\mu)_*$ by interchanging the roles of $\pi_1$ and $\pi_2$ or, equivalently, by replacing $g^*$ by $g_*$ in the above formula.

For a continuous function $\varphi$ on $\P^1$ we get
\begin{equation} \label{eq:transition-operator}
 f^*_\mu (\varphi)(x) = \int_G \varphi(g \cdot x) \, \diff \mu(g), \quad x \in \P^1,
\end{equation}
which is the standard Markov-Feller operator (or transfer operator) associated with $\mu$. Dually, if $m$ is a probability measure on $\P^1$ we have $(f_\mu)_* m = \mu \ast m$, the convolution of $\mu$ and $m$ (see \cite{bougerol-lacroix} for more details). A probability measure $m$ on $\P^1$ is called \textbf{stationary} with respect to $\mu$ if $\mu \ast m = m$, or equivalently if $m$ is $(f_\mu)_*$-invariant.

For $n \geq 1$ we define $f^n_\mu$ to be the correspondence associated with the convolution measure $\mu^{*n} = \mu * \cdots * \mu$ ($n$ times) which is the pushforward of the product measure $\mu^{\otimes n}$ on $G^n$ by the map $(g_1, \ldots, g_n) \mapsto g_n \cdots g_1$. When $\mu$ is finitely supported we recover the usual notion of iteration of a correspondence.

We say that $\mu$ is \textbf{non-elementary} if its support does not preserve a finite subset of $\P^1$ and if the semi-group generated by $\supp (\mu)$ is not relatively compact. It is a result of Furstenberg that a non-elementary measure admits a unique stationary measure (see \cite[II.4.1]{bougerol-lacroix} and Remark \ref{remark:unique-measure}).

Our first main result is the following. See Theorem \ref{thm:exponential-convergence} and also Remark \ref{remark:unique-measure} for the precise statement. See also Definition \ref{def:moments} for more on moment conditions on $\mu$.

\begin{theorem} \label{thm:exponential-convergence-main}
Let $\mu$ be a non-elementary probability measure on $G=\PSL_2(\C)$ and let $\nu$ be its unique stationary measure. Assume that $\mu$ has  a finite first moment, i.e.\ $\int_G \log \|g\| \diff \mu (g) < + \infty$.  Then the iterates of the transfer operator associated with $\mu$ converge exponentially fast to $\nu$ with respect to the Sobolev norm $\|\cdot\|_{W^{1,2}}$ on test functions. 
\end{theorem}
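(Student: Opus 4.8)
The plan is to set up the transfer operator $f^*_\mu$ acting on the Sobolev space $W^{1,2}(\P^1)$ and prove a spectral gap. First I would observe that $f^*_\mu$ fixes constants and preserves the space of functions with zero mean against the stationary measure $\nu$, so it suffices to show that $\|(f^*_\mu)^n\varphi\|_{W^{1,2}}$ decays geometrically on the hyperplane $\{\int\varphi\,d\nu=0\}$. The key point is to control the action of $f^*_\mu$ on gradients: since $f^*_\mu\varphi(x)=\int_G\varphi(g\cdot x)\,d\mu(g)$, differentiating gives $d(f^*_\mu\varphi)(x)=\int_G g^*(d\varphi)(x)\,d\mu(g)$, so the $L^2$-norm of the differential transforms by a weighted pullback involving the derivative cocycle $\|(g^{-1})'\|$ along random orbits. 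The heart of the matter is a contraction estimate: for a non-elementary $\mu$ with finite first moment, the random products $g_n\cdots g_1$ are proximal (Furstenberg), and the derivative of $g\cdot$ at a generic point decays exponentially in the operator norm; this should yield $\int_{\P^1}|d((f^*_\mu)^n\varphi)|^2_{\FS}\,\omegaFS \le C\lambda^n\|\varphi\|_{W^{1,2}}^2$ for some $\lambda<1$, at least after combining with the zero-mean normalization to handle the $L^2$-part of the norm via a Poincaré-type inequality against $\nu$ or $\omegaFS$.

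The main technical steps, in order, would be: (1) establish that $f^*_\mu$ is a bounded operator on $W^{1,2}(\P^1)$ under the first moment hypothesis — this requires estimating $\int_G\|g\|^2$-type quantities, but the logarithmic moment should suffice once one works with the conformal factor $|g'|$ which on $\P^1$ is comparable to $\|g\|^{-2}$ near the attracting point, and one integrates against the smooth Fubini--Study form rather than Lebesgue measure; (2) prove exponential decay of the $L^2$-norm $\|(f^*_\mu)^n\varphi - \langle\nu,\varphi\rangle\|_{L^2}$, which follows from the already-cited uniqueness of the stationary measure together with a quantitative equidistribution / contraction-on-average statement coming from proximality; (3) upgrade to the gradient estimate using the cocycle computation above, controlling $\int_G\int_G\cdots$ over $n$ steps of the product by the exponential decay of the norm cocycle; and (4) interpolate or combine (2) and (3), using that on the zero-mean hyperplane the full $W^{1,2}$-norm is controlled by the gradient norm plus the $L^2$-norm, to conclude $\|(f^*_\mu)^n\varphi-\langle\nu,\varphi\rangle\|_{W^{1,2}}\le C\rho^n\|\varphi\|_{W^{1,2}}$.

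I expect the principal obstacle to be step (3): converting the pointwise exponential contraction of derivatives along a single random orbit into a genuine operator-norm estimate in $W^{1,2}$. The difficulty is twofold. First, the derivative cocycle $g\mapsto \|(g\cdot)'(x)\|$ is unbounded on $G$ and the contraction is only exponential "on average" and only away from a small bad set near the repelling point of $g$, so one must carefully handle the regions of $\P^1$ where individual matrices expand rather than contract, and show that the $\mu^{\otimes n}$-average over such configurations is still small — this is where the non-elementary hypothesis and the finite first moment genuinely enter, presumably through a large-deviation or sub-multiplicativity argument for the norm cocycle. Second, Sobolev norms are not multiplicative under composition, so a naive iteration loses constants; the remedy is likely to prove a single-step "norm-decreasing up to a compact perturbation" inequality (a Lasota--Yorke / Doeblin--Fortet type estimate $\|f^*_\mu\varphi\|_{W^{1,2}}\le \theta\|\varphi\|_{W^{1,2}} + C\|\varphi\|_{L^2}$ with $\theta<1$) and then combine it with the $L^2$-decay from step (2) via a standard Hennion-type argument to extract the spectral gap. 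Making the constant $\theta$ explicitly less than one is the crux, and I anticipate it requires the hyperbolicity (positivity of the top Lyapunov exponent) that non-elementarity guarantees.
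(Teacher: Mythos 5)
The central gap is in your step (3), and it propagates to the Lasota--Yorke estimate you want in step (4). Your contraction mechanism is pointwise exponential decay of the derivative cocycle at generic points, controlled through positivity of the Lyapunov exponent and a large-deviation treatment of the bad region near the repelling points. But the quantity that must contract is the $L^2(\omegaFS)$-norm of the $(1,0)$-form $\partial\varphi$, and this norm is a conformal invariant: for \emph{every single} $g\in\PSL_2(\C)$ one has $\int_{\P^1} i\, g^*\partial\varphi\wedge\overline{g^*\partial\varphi}=\int_{\P^1} i\,\partial\varphi\wedge\overline{\partial\varphi}$, so each $g^*$ is an isometry on $L^2_{(1,0)}$. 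Hence no termwise bound on the size of the derivative cocycle, however sharp your control of the bad set, can yield a factor $\lambda^n<1$: the expansion near the repelling point exactly compensates the contraction elsewhere, and the triangle inequality gives only $\|(f_\mu^n)^*\partial\varphi\|_{L^2}\le\int\|(g_n\cdots g_1)^*\partial\varphi\|_{L^2}\,d\mu^{\otimes n}=\|\partial\varphi\|_{L^2}$, with no gain. The decay must come from cancellation between the pullbacks attached to \emph{independent} products, and this is exactly what the paper exploits: the identity $f_\mu^*(i\phi\wedge\overline\phi)-i f_\mu^*\phi\wedge\overline{f_\mu^*\phi}=\tfrac12\iint i(g_1^*\phi-g_2^*\phi)\wedge\overline{(g_1^*\phi-g_2^*\phi)}\,d\mu(g_1)d\mu(g_2)$ shows $\|f_\mu^*\|_{L^2_{(1,0)}}\le1$, and if equality held for all iterates a compactness argument produces a probability measure on $\P^1$ invariant under pullback by all $gh^{-1}$ with $g,h\in\supp(\mu^{*n})$, which contradicts non-elementarity via the existence of loxodromic elements (appendix). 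No Lyapunov exponent, proximality, or large deviations enters, which is precisely how the first-moment hypothesis becomes sufficient. Relatedly, your step (2) is circular: uniqueness of the stationary measure only gives qualitative convergence, and the ``quantitative equidistribution coming from proximality'' you invoke is essentially the statement being proved; under a mere first moment the classical Le Page route you are implicitly following is exactly what is unavailable.

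A secondary but genuine gap is step (1). Pullback of a \emph{function} involves no conformal factor: change of variables gives only $\|g^*\varphi\|^2_{L^2}\le\|g\|^4\|\varphi\|^2_{L^2}$, which under a first moment does not give a bounded operator on $W^{1,2}$, and your remark about $|g'|\sim\|g\|^{-2}$ near the attracting point does not repair this. The paper's mechanism is the Moser--Trudinger exponential estimate: functions in a bounded subset of $W^{1,2}$ satisfy $\int e^{\alpha\varphi^2}\omegaFS\le A$, and Jensen's inequality applied to $\exp$ converts the factor $\|g\|^4$ into an additive $\log\|g\|$, i.e.\ $\|g^*\varphi\|_{L^2}\lesssim 1+\log\|g\|$ on the unit ball, after which the finite first moment suffices. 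Finally, note that the paper does not presuppose the stationary measure in the proof: once the strict contraction on $L^2_{(1,0)}$ is available for some iterate, $\nu$ is \emph{constructed} by a telescoping series $(f_\mu^n)^*h=h_n+c_0+\cdots+c_n$ with $|c_n|\lesssim\lambda^{n-1}\|\partial h\|_{L^2}$ and $\|h_n\|_{L^2}\lesssim\lambda^n\|\partial h\|_{L^2}$ (Poincar\'e--Sobolev for the zero-mean parts), which simultaneously yields the exponential convergence in $W^{1,2}$; your plan would need an analogous bookkeeping step even if the contraction were established.
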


When the measure $\mu$ has a finite exponential moment, that is, when $\int_G \|g\|^\alpha \diff \mu (g) < + \infty$ for some $\alpha > 0$, the exponential convergence of the transfer operator towards the stationary measure is a fundamental result of Le Page \cite{lepage:theoremes-limites}. The convergence in this case is for test functions in some H\" older space and it has many important consequences such as the Central Limit Theorem mentioned below, the Large Deviation Theorem and other analogues of classical limit theorem for i.i.d.\ random variables.
\vskip5pt
The results that follow will be consequences of Theorem \ref{thm:exponential-convergence-main}. The first one says that the forward images of any given point  $a \in \P^1$ by the generalized correspondence $f_\mu$ converge to $\nu$ exponentially fast and uniformly in $a$.

\begin{theorem} \label{thm:equidistribution-images}
Let $\mu$ be a non-elementary probability measure on $G=\PSL_2(\C)$. Assume that $\int_G (\log \|g\|)^{1 + \epsilon} \, \diff \mu(g) < + \infty$  for some $\epsilon > 0$. Then, there is a constant $0< \gamma < 1$ such that for any $a \in \P^1$ and every test function $\varphi$ of class $\mathcal C^\beta$ on $\P^1$, with $0<\beta \leq 1$,  we have
\begin{equation}
\big| \big \langle (f_\mu^n)_*\delta_a - \nu, \varphi \big \rangle \big | \leq A_\beta \|\varphi\|_{\mathcal C^\beta} \gamma^{\beta n} \quad \text{for every} \quad n \geq 0,
\end{equation}
where $A_\beta > 0$ is a constant independent of $n$, $a$ and $\varphi$.
\end{theorem}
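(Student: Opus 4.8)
The strategy is to reduce the statement, via the stationarity of $\nu$, to a single uniform exponential contraction estimate for the random products; this sidesteps having to evaluate Sobolev functions at points.

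\emph{Reduction.} Write $X_n := g_n\cdots g_1$, the $g_i$ being i.i.d.\ with law $\mu$, so that $(f_\mu^n)^*\varphi(x) = \E[\varphi(X_n\cdot x)]$ by \eqref{eq:transition-operator}. Since $\nu$ is $(f_\mu)_*$-invariant we have $\langle\nu,\varphi\rangle = \langle\nu,(f_\mu^n)^*\varphi\rangle$, and therefore for every $a\in\P^1$
\[
\big\langle (f_\mu^n)_*\delta_a - \nu,\varphi\big\rangle = (f_\mu^n)^*\varphi(a) - \int_{\P^1}(f_\mu^n)^*\varphi\,\diff\nu = \int_{\P^1}\Big((f_\mu^n)^*\varphi(a) - (f_\mu^n)^*\varphi(y)\Big)\,\diff\nu(y).
\]
As $\varphi$ is of class $\mathcal C^\beta$, $\big|(f_\mu^n)^*\varphi(x) - (f_\mu^n)^*\varphi(y)\big| \le \|\varphi\|_{\mathcal C^\beta}\,\E\big[\dist(X_n\cdot x,\, X_n\cdot y)^\beta\big]$ for all $x,y$, so the quantity above is bounded by $\|\varphi\|_{\mathcal C^\beta}\sup_{y\in\P^1}\E[\dist(X_n\cdot a, X_n\cdot y)^\beta]$. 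It thus suffices to prove the contraction estimate
\[
\sup_{x,y\in\P^1}\ \E\big[\dist(X_n\cdot x,\, X_n\cdot y)^\beta\big] \le A_\beta\,\gamma^{\beta n},\qquad n\ge 0,
\]
for some $\gamma\in(0,1)$ that can be taken independent of $\beta$.

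\emph{The contraction estimate.} Here I would exploit the proximality of random matrix products. Fix a Cartan decomposition $X_n = k_n D_n k_n'$ with $k_n,k_n'$ unitary and $D_n = \diag(\|X_n\|,\|X_n\|^{-1})$. A short computation in the spherical metric shows that if $x$ and $y$ both lie at distance $\ge\eta$ from the most contracted direction $p_n^- := (k_n')^{-1}\!\cdot[0:1]$, then $\dist(X_n\cdot x,\, X_n\cdot y) \le C\,\|X_n\|^{-2}\eta^{-1}$, while in general $\dist(X_n\cdot x, X_n\cdot y)$ is bounded by the diameter of $\P^1$. Splitting the expectation over the events $\{\log\|X_n\|\ge c'n\}$, $\{\dist(x,p_n^-)\ge\eta\}\cap\{\dist(y,p_n^-)\ge\eta\}$ and their complements, and then optimizing the auxiliary parameter $\eta = \eta(n,\beta)$, one reduces the contraction estimate to two inputs: (i) an exponential lower large-deviation bound $\mathbb{P}(\log\|X_n\|\le c'n)\le Ce^{-\kappa n}$ for some $0<c'<\lambda$, where $\lambda>0$ is the top Lyapunov exponent of $\mu$ (positive by Furstenberg, using the finite first moment); and (ii) a uniform non-concentration bound $\sup_{z\in\P^1}\mathbb{P}\big(\dist(z,p_n^-)<r\big)\le Cr^{s}$, valid for all $n$ with some fixed $s>0$.

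The main obstacle is item (ii): a power-type non-concentration estimate, uniform in $n$, for the laws of the contracted directions $p_n^-$, under only a finite $(1+\epsilon)$-moment. This is precisely the point where the hypothesis of Theorem~\ref{thm:equidistribution-images} is strictly stronger than that of Theorem~\ref{thm:exponential-convergence-main}: a weaker (say logarithmic) modulus of non-concentration would only give polynomial decay in $n$. I expect this to follow from the regularity results for stationary measures that are derived from Theorem~\ref{thm:exponential-convergence-main}, applied both to $\mu$ and to its reflection $\check\mu$, together with the fact that the distributions of $p_n^-$ converge to, and are dominated by, the stationary measure of $\check\mu$. Item (i), the spherical-geometry estimate, and the reduction above are routine.
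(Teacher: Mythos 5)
Your reduction to the pathwise contraction estimate $\sup_{x,y}\E\big[\dist(X_n\cdot x,X_n\cdot y)^\beta\big]\le A_\beta\gamma^{\beta n}$ is correct as an inequality, but the scheme breaks down exactly at the point you flag, and the gap cannot be filled along the lines you suggest. The non-concentration input (ii), $\sup_z\mathbf{P}\big(\dist(z,p_n^-)<r\big)\le Cr^s$ uniformly in $n$, would (letting $n\to\infty$) force the stationary measure $\check\nu$ of the reflected walk to be H\"older regular, i.e.\ $\check\nu(\DD(z,r))\le Cr^s$. Under a finite $(1+\epsilon)$-moment this is false in general: for $\mu$ with polynomial tails whose large matrices attract strongly towards a fixed point $z_0$, stationarity gives $\check\nu(\DD(z_0,r))\gtrsim|\log r|^{-A}$, and indeed the paper's own regularity results give power-type regularity only under an exponential moment (Corollary \ref{cor:reg-expmoment}), while under a first moment one only gets the logarithmic bound of Corollary \ref{cor:reg-firstmoment} -- which, as you yourself observe, yields only polynomial decay in $n$. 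Worse, the intermediate target itself appears to fail at this level of moments: taking $\mu$ with heavy polynomial tails containing matrices strongly attracting to and strongly repelling from a common point, one can arrange that with probability $\gtrsim n^{-A}$ the first $n-1$ matrices carry a fixed pair $x,y$ into an $e^{-2\lambda n}$-neighbourhood of that point while keeping $\dist(X_{n-1}x,X_{n-1}y)\approx e^{-2\lambda n}$, and with probability $\gtrsim n^{-(1+2\epsilon)}$ the last matrix has $\log\|g_n\|\approx\lambda n$ with its repelling direction there, re-separating the two images to order $1$; hence $\E[\dist(X_nx,X_ny)^\beta]$ decays only polynomially even though the theorem holds. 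The theorem is a statement about the laws $(f_\mu^n)_*\delta_a$ tested against H\"older functions, and under weak moments this is strictly weaker than pathwise contraction. (Item (i), the exponential lower-tail large deviation under a $(1+\epsilon)$-moment, is also not ``routine'' and is nowhere established in the paper, though it is less problematic.)

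For contrast, the paper's proof avoids any regularity or non-concentration input. After interpolating to $\beta=1$, it splits $\varphi_n=(f_\mu^n)^*\varphi$ according to whether $\|g_n\cdots g_1\|\le C_n:=e^{\delta_0^n}$, a \emph{superexponential} cutoff: Markov's inequality together with subadditivity of $\log\|\cdot\|$ makes the bad part exponentially small using only the $(1+\epsilon)$-moment, and on the good part the spectral gap of $f_\mu^*$ on $L^2_{(1,0)}$ (Proposition \ref{prop:normlessthan1}) shows that $\delta^n\varphi_n^{(1)}$ stays bounded in $W^{1,2}$, so the Moser--Trudinger-type Lemma \ref{lemma:holder-exponential} converts the enormous Lipschitz constant $A_0C_n^2$ into a sup-norm bound of size only $O(\delta_0^n)$, which the factor $\delta^{-n}$ beats. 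If you want to rescue your approach you would need (ii) at scales $r\approx e^{-cn}$ with exponentially small error, which is essentially Le Page's theory and requires exponential moments; under the stated hypothesis you should instead run the argument through the $W^{1,2}$ spectral gap as the paper does.
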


Next, we give a new proof of the follwing known Central Limit Theorem for the random variables $\log \frac{\|g_n \cdots g_1 \cdot v\|}{\|v\|}$ where $v$ is any non-zero vector in $\C^2$. The Lyapunov exponent is defined in Section \ref{sec:CLT}.

\begin{theorem}[Central Limit Theorem] \label{thm:CLT}
Let $\mu$ be a probability measure on $G=\PSL_2(\C)$. Assume that $\mu$ is non elementary and has a finite second moment, i.e.\ $\int_G (\log \|g\|)^2 \, \diff \mu(g) < + \infty$. Let $\gamma$ be the Lyapunov exponent of $\mu$. Then there exists a number $\sigma >0$ such that for any $v \in \C^2 \setminus\{0\}$. 
\begin{equation} \label{eq:CLT}
\frac{1}{\sqrt n} \bigg( \log \frac{\|g_n \cdots g_1 \cdot v\|}{\|v\|} - n \gamma \bigg) \longrightarrow \mathcal N(0;\sigma^2) \quad \text{ in law},
\end{equation}
where $N(0;\sigma^2)$ is the centred normal distribution with variance $\sigma^2$.
\end{theorem}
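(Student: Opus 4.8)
The plan is to run Gordin's martingale method, with the exponential mixing supplied by Theorems~\ref{thm:exponential-convergence-main} and~\ref{thm:equidistribution-images} as the only ergodic input. Write $\sigma(g,x):=\log\frac{\|g\cdot v\|}{\|v\|}$ for a unit vector $v$ representing $x\in\P^1$; this is a continuous additive cocycle, $\sigma(g_2g_1,x)=\sigma(g_2,g_1x)+\sigma(g_1,x)$, with $|\sigma(g,x)|\le\log\|g\|$. Put $\psi:=f^*_\mu(\sigma)=\int_G\sigma(g,\cdot)\,\diff\mu(g)$, a bounded function, so that $\gamma=\int_{\P^1}\psi\,\diff\nu$ is Furstenberg's formula for the Lyapunov exponent and $\psi-\gamma$ has zero $\nu$-mean. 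If $X_0=[v]$ and $X_k=g_k\cdots g_1\cdot X_0$, the cocycle relation gives $\log\frac{\|g_n\cdots g_1\cdot v\|}{\|v\|}=\sum_{k=1}^{n}\sigma(g_k,X_{k-1})$, so the task is a central limit theorem for a Birkhoff-type sum along the random walk.

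The first step is to solve the Poisson equation $g_0-f^*_\mu g_0=\psi-\gamma$. A direct estimate shows $\|\sigma(g,\cdot)\|_{W^{1,2}}\lesssim 1+\log\|g\|$ (the gradient of $\sigma(g,\cdot)$ on $\P^1$, while possibly large near the most-contracted direction of $g$, is large only on a ball whose radius is a negative power of $\|g\|$), hence $\psi\in W^{1,2}$ already under a finite first moment. Since $\langle\nu,\psi-\gamma\rangle=0$, Theorem~\ref{thm:exponential-convergence-main} yields $\|f^{*n}_\mu(\psi-\gamma)\|_{W^{1,2}}\le C\rho^{n}$ with $\rho<1$, so $g_0:=\sum_{n\ge0}f^{*n}_\mu(\psi-\gamma)$ converges and solves the equation. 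To make $g_0$ meaningful $\nu$-almost everywhere — recall that $\nu$ is typically singular with respect to Lebesgue measure — one invokes the regularity of the stationary measure established elsewhere in the paper: either $\nu$ has finite logarithmic energy, hence $\nu\in W^{-1,2}$, which transfers the $W^{1,2}$-decay into exponential $L^2(\nu)$-decay of $f^{*n}_\mu(\psi-\gamma)$ and places $g_0\in L^2(\nu)$; or $\psi-\gamma$ is H\"older, in which case Theorem~\ref{thm:equidistribution-images} (whose hypothesis is implied by the second moment) gives $\|f^{*n}_\mu(\psi-\gamma)\|_\infty\le C\rho^{\beta n}$ and $g_0$ is continuous outright.

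Now set $\tau(g,x):=\sigma(g,x)-\gamma-g_0(x)+g_0(gx)$, so that $\int_G\tau(g,x)\,\diff\mu(g)=0$ for every $x$; the second-moment hypothesis gives $\tau\in L^2(\mu\otimes\nu)$ (using $\int g_0(gx)^2\,\diff\mu(g)\,\diff\nu(x)=\int g_0^2\,\diff\nu$ by stationarity of $\nu$). With $X_0\sim\nu$, the sequence $\big(\tau(g_{k+1},X_k)\big)_{k\ge0}$ is stationary, is a sequence of martingale differences for the filtration generated by $X_0$ and $(g_j)$, and is ergodic because $\nu$ is the unique stationary measure; the central limit theorem for stationary ergodic martingale differences then gives $\frac1{\sqrt n}\sum_{k=0}^{n-1}\tau(g_{k+1},X_k)\to\mathcal N(0;\sigma^2)$ with $\sigma^2:=\int_G\int_{\P^1}\tau(g,x)^2\,\diff\mu(g)\,\diff\nu(x)$, a quantity independent of $v$. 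Telescoping the coboundary terms, $\log\frac{\|g_n\cdots g_1\cdot v_0\|}{\|v_0\|}-n\gamma=\sum_{k=0}^{n-1}\tau(g_{k+1},X_k)+g_0(X_0)-g_0(X_n)$ with $X_0=[v_0]$; since $g_0(X_0)$ and $g_0(X_n)$ both have the tight law $(g_0)_*\nu$, dividing by $\sqrt n$ gives the CLT when the initial point is $\nu$-distributed. Finally one passes to a deterministic $v$, and to $\log\|g_n\cdots g_1\|$ itself, using that $\log\|g_n\cdots g_1\cdot w\|-\log\|g_n\cdots g_1\|$ is tight uniformly in $n$ for each fixed $w$ — a consequence of the a.s.\ convergence of the most-contracted direction of $g_n\cdots g_1$ and the non-atomicity of the stationary measure.

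The two genuinely delicate points, in my view, are: (a) making the Poisson solution $g_0$ meaningful against the (possibly singular) measure $\nu$ under only a second moment — this is precisely where the Lebesgue–Sobolev estimate of Theorem~\ref{thm:exponential-convergence-main} must be married to the regularity of $\nu$ (or of $\psi$); and (b) the strict positivity $\sigma^2>0$. For (b), $\sigma^2=0$ would force $\sigma(g,x)-\gamma=g_0(x)-g_0(gx)$ for $\mu\otimes\nu$-a.e.\ $(g,x)$; iterating and evaluating along powers of a loxodromic element $h$ of the semigroup generated by $\supp(\mu)$ (such elements exist by non-elementarity) forces $\log|\lambda_h|=\gamma$ for the dominant eigenvalue of every such $h$, whereas non-elementarity produces loxodromic elements — for instance $h_1^m h_2^m$ with $h_1,h_2$ loxodromic without common fixed point and $m$ large — having mutually distinct translation lengths, a contradiction. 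I would carry out this last step following the classical arguments in \cite{bougerol-lacroix} and \cite{benoist-quint:book}.
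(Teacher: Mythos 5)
Your overall strategy is of the same Gordin type as the paper's (the paper applies the Gordin--Liverani criterion to the skew product on $G^{\N^*}\times\P^1$ equipped with the Furstenberg measure $\mathfrak m$, verifying $\sum_n\|\Lambda^n\widetilde\varphi\|_{L^2(\mathfrak m)}^2<\infty$ directly, rather than constructing the martingale--coboundary decomposition on the forward chain as you do), but the step you yourself flag as delicate point (a) contains a genuine gap. Continuity of $\nu$ on $W^{1,2}$ (Theorem \ref{thm:exponential-convergence}(2)) controls $\langle\nu,h\rangle$ by $\|h\|_{W^{1,2}}$, but it does not control $\langle\nu,h^2\rangle$: squaring is not bounded on $W^{1,2}$ (one needs $h\,\partial h\in L^2$, i.e.\ an $L^\infty$ bound on $h$), so the assertion that ``$\nu\in W^{-1,2}$ transfers $W^{1,2}$-decay into exponential $L^2(\nu)$-decay of $f_\mu^{*n}(\psi-\gamma)$'' is not a valid inference as stated. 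Your fallback route is not available either: the Lipschitz constant of $\theta_g(x)=\log\frac{\|g\cdot v\|}{\|v\|}$ is of order $\|g\|^2$ (cf.\ the proof of Lemma \ref{lemma:theta_g-estimate}), so H\"older continuity of $\psi$ would require an exponential moment, not the second moment assumed here, and Theorem \ref{thm:equidistribution-images} cannot be applied to $\psi-\gamma$. The missing ingredient is precisely the interpolation used in the paper's Proposition \ref{prop:gordin-condition}: since $f_\mu^*$ is a Markov operator, $\|f_\mu^{*n}(\psi-\gamma)\|_\infty\le\|\psi-\gamma\|_\infty\le\int_G\log\|g\|\,\diff\mu(g)+|\gamma|$ uniformly in $n$, while $\|f_\mu^{*n}(\psi-\gamma)\|_{L^1(\nu)}\lesssim\lambda^n$ follows from the $W^{1,2}$-decay together with $\|\,|h|\,\|_{W^{1,2}}\lesssim\|h\|_{W^{1,2}}$ and the continuity of $\nu$ on $W^{1,2}$; interpolating gives $\|f_\mu^{*n}(\psi-\gamma)\|_{L^2(\nu)}\lesssim\lambda^{n/2}$, which is exactly what puts $g_0$ in $L^2(\nu)$ (in the paper's formulation it verifies Gordin's condition without ever constructing $g_0$). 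With this repair your route works and is essentially equivalent in technical content to the paper's.

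Your argument for $\sigma^2>0$ also has a soft spot: you evaluate the identity $\sigma(g,x)-\gamma=g_0(x)-g_0(g\cdot x)$ at fixed points of specific loxodromic elements, but the identity holds only $\mu\otimes\nu$-almost everywhere and $g_0$ is merely an $L^2(\nu)$-class; since $\nu$ is non-atomic, fixed points are $\nu$-null, so this evaluation is not licensed without continuity of $g_0$, which you do not have under a second moment. The paper rules out the coboundary case differently (Lemma \ref{lemma:not-coboundary}): an $L^2$ coboundary would make the centred sums $\log\frac{\|g_n\cdots g_1\cdot v\|}{\|v\|}-n\gamma$ tight, and this is contradicted by \cite[V.8.5--V.8.6]{bougerol-lacroix}; the same tightness contradiction applies verbatim to your decomposition $\sum_{k<n}\tau(g_{k+1},X_k)+g_0(X_0)-g_0(X_n)$, so replacing the translation-length argument by this one closes the gap under the stated hypotheses. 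Your final passage from a $\nu$-distributed starting point to a fixed $v$ via the uniform comparison of $\|g_n\cdots g_1\cdot v\|$ with $\|g_n\cdots g_1\|$ is the same as the paper's use of Proposition \ref{prop:norm-comparison} and is fine.
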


Under an exponential moment condition, the above result is mainly due to Le Page  \cite{lepage:theoremes-limites} and was later refined by other authors (see  for instance \cite{guivarch-raugi,golsheid-margulis}).  The question of whether this condition could be relaxed to an (optimal) second moment condition remained open until very recently, when Benoist-Quint gave an affirmative answer, \cite{benoist-quint:CLT}. Our proof is independent of theirs and, in particular, does not rely on an a priori knowledge of the regularity of $\nu$ (although we also obtain such results later in Section \ref{sec:regularity}). We expect that our method can be generalized to cover the general case of Benoist-Quint.
\vskip5pt
Our final result concerns the regularity of stationary measures. If  $\mu$ is a non-elementary probability measure and $\nu$ is the associated stationary measure, the regularity of $\nu$ will depend on moment conditions on $\mu$. The statement of our main result (Theorem \ref{t:general-sp} below) and its proof rely on the theory of superpotentials introduced by Sibony and the first author \cite{dinh-sibony:acta}. We state here some more concrete consequences (see Corollaries \ref{cor:reg-expmoment} and \ref{cor:reg-firstmoment}) and refer to Section \ref{sec:regularity} for the general statements. It is worth mentioning that similar regularity results can be found in the literature (see Remarks \ref{rmk:holder-regular} and \ref{rmk:BQregularity}), although they are obtained by completely different methods. Here $\DD(a,r)$ denotes the disc of radius $r$ and center $a$ with respect to the standard metric  on $\P^1$. 

\begin{theorem}
Let $\mu$ be a non-elementary probability measure on $G=\PSL_2(\C)$. 
\begin{enumerate}
\item If $\mu$ has a finite exponential moment, i.e.\ $\int_G \|g\|^p \, \diff \mu(g) < +\infty$ for some $p>0$, then there are constants $c, \alpha > 0$ such that  $\nu(\DD(a,r)) \leq c r^\alpha$ for every $a \in \P^1$ and $0 < r \leq 1$.
\item If $\mu$ has a finite first moment, i.e. $\int_G \log \|g\| \, \diff \mu(g) < +\infty $ then there are constants $c, \alpha > 0$  such that $\nu(\DD(a,r))\leq c|\log r|^{-\alpha}$ for every $a \in \P^1$ and $0 < r \leq 1$.
\end{enumerate}
\end{theorem}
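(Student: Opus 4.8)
The plan is to control a quasi-potential of $\nu$ — equivalently, its superpotential in the sense of \cite{dinh-sibony:acta}, which is the object Theorem~\ref{t:general-sp} is stated in terms of — and to read off the two bounds by classical potential theory. Write $\nu=\omegaFS+\ddc u_\nu$ with $u_\nu$ the $\omegaFS$-psh function normalized by $\int u_\nu\,\omegaFS=0$, so that $u_\nu(x)=\int_{\P^1}G(x,y)\,d\nu(y)$ with $G(x,y)=\tfrac1{2\pi}\log\dist(x,y)+O(1)$ the Green function of $\P^1$. Integration by parts on the curve $\P^1$ gives $\int u_\nu\,d\nu=\langle\ddc u_\nu,u_\nu\rangle=-c_0\|\nabla u_\nu\|_{L^2}^2$ ($c_0>0$), hence the logarithmic energy $\mathcal E(\nu):=\int\!\!\int\log\tfrac1{\dist(x,y)}\,d\nu(x)\,d\nu(y)$ equals $2\pi c_0\|\nabla u_\nu\|_{L^2}^2+O(1)$. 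Restricting this double integral to $\DD(a,r)\times\DD(a,r)$, where $\log\tfrac1{\dist}\ge\tfrac12|\log r|$ for $r\le1/4$, yields $\nu(\DD(a,r))^2\le\big(\mathcal E(\nu)+O(1)\big)/\big(\tfrac12|\log r|\big)$, so \emph{$u_\nu\in W^{1,2}(\P^1)$ already forces (2), with $\alpha=1/2$}. Likewise, pairing with a standard cutoff $\chi_{a,r}$ ($\ind_{\DD(a,r)}\le\chi_{a,r}\le\ind_{\DD(a,2r)}$, $\|\ddc\chi_{a,r}\|_{L^1}\le C$) and using $\int_{\P^1}\ddc\chi_{a,r}=0$ gives $\nu(\DD(a,r))\le Cr^2+\langle u_\nu-u_\nu(a),\ddc\chi_{a,r}\rangle\le Cr^2+C\|u_\nu\|_{\mathcal C^\beta}r^\beta$, so \emph{$u_\nu\in\mathcal C^\beta(\P^1)$ forces (1), with $\alpha=\beta$}. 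It remains to prove $u_\nu\in W^{1,2}$ under a finite first moment, and $u_\nu\in\mathcal C^\beta$ for some $\beta>0$ under a finite exponential moment.

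Next I would convert the stationarity identity $\nu=\int_G g_*\nu\,d\mu(g)$ into a fixed-point equation for $u_\nu$. Using $g_*\omegaFS=\omegaFS+\ddc\psi_g$ with the norm cocycle $\psi_g([v])=\log\tfrac{\|g^{-1}v\|}{\|v\|}$, this becomes, modulo additive constants,
\[
u_\nu=\Lambda u_\nu+\widetilde h,\qquad \Lambda\varphi:=\int_G\varphi\circ g^{-1}\,d\mu(g),\qquad \widetilde h:=\int_G\psi_g\,d\mu(g)-\gamma,
\]
where $\gamma$ is the Lyapunov exponent of $\mu$; subtracting $\gamma$ is forced by $\check\nu$-stationarity together with Furstenberg's formula, and makes $\langle\check\nu,\widetilde h\rangle=0$. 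Here $\Lambda=(f_{\check\mu})^*$ is the transfer operator of the reversed walk $\check\mu$, which is again non-elementary with the same moments as $\mu$; in superpotential language the displayed identity is exactly the eigenvalue equation for $\Uc_\nu$ under the operator induced by $f_\mu$, with defect $\gamma$ obtained by integrating the norm cocycle — establishing and exploiting this is the content of Theorem~\ref{t:general-sp}. The forcing term is controlled directly: $\|\widetilde h\|_{L^\infty}\le2\int_G\log\|g\|\,d\mu$, and since $\|\psi_g\|_{L^\infty}\le\log\|g\|$ the two-dimensional identity $\|\nabla\psi_g\|_{L^2}^2=2\pi\langle\omegaFS-g_*\omegaFS,\psi_g\rangle\le4\pi\|\psi_g\|_{L^\infty}$ gives $\|\widetilde h\|_{W^{1,2}}\le C\big(1+\int_G\log\|g\|\,d\mu\big)<\infty$ under a finite first moment; while $|\psi_g(x)-\psi_g(y)|\lesssim\|g\|^2\dist(x,y)$ gives $\|\widetilde h\|_{\mathcal C^\beta}\lesssim\int_G\|g\|^{2\beta+\epsilon}\,d\mu<\infty$ for all small $\beta,\epsilon$ under a finite exponential moment.

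Iterating the fixed-point equation presents $u_\nu=\sum_{n\ge0}\Lambda^n\widetilde h$ modulo constants. Since $\Lambda$ is a contraction for the conformally invariant Dirichlet seminorm $\|\nabla\cdot\|_{L^2}$, and since $\langle\check\nu,\widetilde h\rangle=0$ so that Theorem~\ref{thm:exponential-convergence-main} applied to $\check\mu$ gives $\|\Lambda^n\widetilde h\|_{W^{1,2}}\le C\gamma_0^{\,n}\|\widetilde h\|_{W^{1,2}}$ for some $0<\gamma_0<1$, the series converges in $W^{1,2}$; hence $u_\nu\in W^{1,2}$ with $\|u_\nu\|_{W^{1,2}}\le C'<\infty$, which together with the first paragraph proves (2). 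For (1), under a finite exponential moment the operator $\Lambda$ has in addition a spectral gap on $\mathcal C^\beta$ for $\beta$ small (classically Le Page \cite{lepage:theoremes-limites}; this is also natural within the present framework, compare Theorem~\ref{thm:equidistribution-images}), so $\|\Lambda^n\widetilde h\|_{\mathcal C^\beta}\le C\gamma_0^{\,\beta n}\|\widetilde h\|_{\mathcal C^\beta}$, the same series converges in $\mathcal C^\beta$, and $u_\nu\in\mathcal C^\beta$ gives (1).

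The step I expect to be the main obstacle is putting the stationarity relation in this usable form: one must pin down the exact additive defect — that it is the Lyapunov exponent is precisely what makes the corrected series converge instead of diverging linearly — and one must control the forcing term at the $W^{1,2}$ level from a first moment alone, for which the identity $\|\nabla\psi_g\|_{L^2}^2=2\pi\langle\omegaFS-g_*\omegaFS,\psi_g\rangle$, turning a gradient bound into a sup bound, is the linchpin and is special to $\dim_\C=1$. It is also worth stressing the softer point that $W^{1,2}$ is borderline in real dimension $2$ and does not embed into $C^0$: a finite first moment puts no rate on the tail of $\|g\|$, so $\widetilde h$ and hence $u_\nu$ carry no a priori modulus of continuity, and the best one can extract is finiteness of the logarithmic energy — which is why the conclusion in case (2) is merely logarithmic, the exponent $1/2$ being sharp for this route (e.g.\ a measure equidistributed on a short arc saturates it). The deepest input, used above as a black box, is Theorem~\ref{thm:exponential-convergence-main} itself.
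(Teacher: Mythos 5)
Your route is genuinely different from the paper's. You introduce the quasi-potential $u_\nu$ (so $\nu=\omegaFS+\ddc u_\nu$), represent it as $\sum_{n\geq 0}\Lambda^n\widetilde h$ with $\Lambda=(f_{\check\mu})^*$ the transfer operator of the reversed walk and $\widetilde h=\int_G\psi_g\,\diff\mu-\gamma$, and read off the disc estimates from the regularity of $u_\nu$; the paper never introduces a potential and instead shows that $\nu$, as a functional on the unit ball of $W^{1,2}$, is H\"older continuous for weak Orlicz norms (Proposition \ref{p:sp}, Theorem \ref{t:general-sp}) and then tests it against the explicit functions $u^\epsilon_{a,r}$ (Proposition \ref{p:size-of-disc}). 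For item (2) your argument is correct and even sharper: $\|\widetilde h\|_{W^{1,2}}\lesssim 1+\int\log\|g\|\,\diff\mu$ (your integration-by-parts bound is exactly the content of Lemma \ref{lemma:theta_g-estimate}), $\langle\check\nu,\widetilde h\rangle=0$ by Furstenberg's formula for $\check\mu$, and Theorem \ref{thm:exponential-convergence} applied to $\check\mu$ (non-elementary, same first moment) makes the series converge in $W^{1,2}$; finite Dirichlet energy of $u_\nu$, i.e.\ finite logarithmic energy of $\nu$, then gives $\nu(\DD(a,r))\lesssim|\log r|^{-1/2}$, an explicit exponent the paper does not state. Two points should be tightened: since you do not yet know $u_\nu\in W^{1,2}$ you cannot literally iterate the fixed-point equation; rather define $U:=\sum_n\Lambda^n\widetilde h$, note that $\ddc$ of the partial sums telescopes to $(f_\mu^N)_*\omegaFS-\omegaFS\to\nu-\omegaFS$, and conclude $U=u_\nu+\mathrm{const}$; and the energy identity $\int u_\nu\,\diff\nu=-c_0\|\partial u_\nu\|^2_{L^2}$ for the unbounded quasi-psh $u_\nu$ needs the standard truncation argument. (In fact Theorem \ref{thm:exponential-convergence}(2) already says $\nu$ is bounded on $W^{1,2}$, which gives the finite energy directly, so (2) follows from your capacity argument even without the series.)

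For item (1), however, there is a genuine gap as written. The convergence of $\sum_n\Lambda^n\widetilde h$ in $\mathcal C^\beta$ rests on a spectral gap of the Markov operator on H\"older space, which you import from Le Page \cite{lepage:theoremes-limites} as a black box. This is not proved in your proposal and is not available from the paper; indeed the whole point of this section (cf.\ Remark \ref{rmk:holder-regular}) is to reach the Guivarc'h--Raugi regularity \emph{without} the classical H\"older-space spectral gap, of which statement (1) is the classical corollary. Your parenthetical ``compare Theorem \ref{thm:equidistribution-images}'' does not supply it: that theorem only bounds $\|\Lambda^n\widetilde h\|_\infty\lesssim\gamma^{\beta n}$, a sup-norm estimate, not the $\mathcal C^\beta$-norm, so it gives neither convergence in $\mathcal C^\beta$ nor any modulus of continuity of $u_\nu$, and a merely bounded or continuous potential does not yield $\nu(\DD(a,r))\leq cr^\alpha$. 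The gap is repairable within your framework without Le Page: under the exponential moment one has $\|\Lambda^n\widetilde h\|_{\mathcal C^{\beta'}}\leq e^{Cn}$ for $2\beta'\leq p$ (the H\"older constant of $x\mapsto\widetilde h(g_n^{-1}\cdots g_1^{-1}x)$ is $\lesssim\|g_1\cdots g_n\|^{2\beta'}$, integrable by submultiplicativity), while $\|\Lambda^n\widetilde h\|_\infty\lesssim\gamma^{\beta n}$ by Theorem \ref{thm:equidistribution-images} applied to $\check\mu$; splitting the series at $n\approx c\log(1/\dist(x,y))$ --- the same interpolation device as in Lemma \ref{l:Holder} --- yields a H\"older modulus for $u_\nu$, and then your cutoff computation gives (1). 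With that substitution your proof is complete and constitutes a legitimate alternative to the paper's Orlicz-norm superpotential argument.
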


\noindent  \textbf{Acknowledgements:} This paper was partially written during the visit of the first author to the Institute of Mathematical Sciences and Department of Mathematics in Chinese University of Hong Kong. He would like to thank these organisations and Prof. Conan Leung for their very warm hospitality. This work was supported by the NUS grants C-146-000-047-001,  AcRF Tier 1 R-146-000-248-114  and R-146-000-259-114.

\section{Action on Sobolev space and convergence to the stationary measure}

This section  is devoted to the proof of Theorem \ref{thm:exponential-convergence-main}. We show that when $\mu$ has a finite first moment, i.e. when $\int_G \log\|g\| \, \diff \mu(g) <+ \infty$, the operator $f^*_\mu$ acts continuously on the Sobolev space $W^{1,2}$. Later on, we prove that when $\mu$ is non-elementary this action has a spectral gap. As a consequence, we get an exponentially fast convergence of the transfer operator towards the stationary measure. 

Consider the space $$L^2_{(1,0)} := \big\{\phi : \phi \text{ is  a } (1,0) \text{-form on } \P^1 \text{ with } L^2 \text{ coefficients} \big\}$$
equipped with the norm
\begin{equation} \label{eq:L2norm}
\|\phi\|_{L^2} := \Big( \int_{\P^1} i \phi \wedge \overline \phi \Big)^{1/2}.
\end{equation}

The space $L^2_{(0,1)}$ and the corresponding norm are defined analogously.

\begin{proposition} \label{prop:CS-ineq}
Let $\mu$ be a probability measure on $G= \PSL_2(\C)$ and let $f_\mu$ be the associated generalized correspondence. Then the operator $f_\mu^*$, which is well-defined on smooth $(1,0)$-forms, extends to a  bounded linear operator $f_\mu^*:L^2_{(1,0)}\to L^2_{(1,0)}$ with norm bounded by $1$. In other words, for $\phi$ in $L^2_{(1,0)}$ we have the inequality $\|f_\mu^*\phi\|_{L^2}\leq \|\phi\|_{L^2}$. Moreover, the equality holds 
if and only if $g_1^*\phi = g_2^*\phi$ for $\mu \otimes \mu$ almost every $(g_1,g_2)$.
\end{proposition}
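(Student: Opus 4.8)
The plan is to reduce the statement to the elementary fact that in a Hilbert space the norm of a $\mu$-average of vectors all having the same length is at most that length, with equality precisely when the vectors coincide $\mu\otimes\mu$-almost everywhere. Here the Hilbert space is $L^2_{(1,0)}$, endowed with the Hermitian inner product $\langle\phi,\psi\rangle:=\int_{\P^1} i\,\phi\wedge\overline\psi$, which induces the norm \eqref{eq:L2norm}, and the vectors are the pullbacks $g^*\phi$ for $g\in G$.

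The first step is to show that for each fixed $g\in\PSL_2(\C)$ the operator $g^*$ is an isometry of $L^2_{(1,0)}$. Indeed $g$ is a holomorphic automorphism of $\P^1$, hence an orientation-preserving diffeomorphism; therefore $g^*\phi$ is again of type $(1,0)$, one has $g^*\overline\phi=\overline{g^*\phi}$, and the change of variables formula gives
\[
\|g^*\phi\|_{L^2}^2=\int_{\P^1} i\,(g^*\phi)\wedge\overline{g^*\phi}=\int_{\P^1} g^*\big(i\,\phi\wedge\overline\phi\big)=\int_{\P^1} i\,\phi\wedge\overline\phi=\|\phi\|_{L^2}^2 .
\]
In particular $g\mapsto g^*\phi$ is bounded; it is also strongly measurable, since for smooth $\phi$ it is continuous into $L^2_{(1,0)}$ and the general case follows by approximating $\phi$ in $L^2$ by smooth forms and using that each $g^*$ is an isometry. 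Hence $f_\mu^*\phi=\int_G g^*\phi\,\diff\mu(g)$ is a well-defined Bochner integral, which coincides with the classical definition on smooth forms and thereby extends it to all of $L^2_{(1,0)}$.

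The inequality $\|f_\mu^*\phi\|_{L^2}\le\|\phi\|_{L^2}$ is then immediate from the triangle inequality for the Bochner integral, because $\|f_\mu^*\phi\|_{L^2}\le\int_G\|g^*\phi\|_{L^2}\,\diff\mu(g)=\int_G\|\phi\|_{L^2}\,\diff\mu(g)=\|\phi\|_{L^2}$. For the equality case, the key identity is obtained by expanding the square of $g_1^*\phi-g_2^*\phi$ and integrating over $(G\times G,\mu\otimes\mu)$; using that $\|g^*\phi\|_{L^2}=\|\phi\|_{L^2}$ for every $g$ and that $\langle f_\mu^*\phi,f_\mu^*\phi\rangle$ equals the double $\mu\otimes\mu$-integral of $\langle g_1^*\phi,g_2^*\phi\rangle$, one finds
\[
\int_{G\times G}\big\|g_1^*\phi-g_2^*\phi\big\|_{L^2}^2\,\diff\mu(g_1)\diff\mu(g_2)=2\,\|\phi\|_{L^2}^2-2\,\|f_\mu^*\phi\|_{L^2}^2 .
\]
Hence $\|f_\mu^*\phi\|_{L^2}=\|\phi\|_{L^2}$ holds if and only if the left-hand integral vanishes, that is, if and only if $g_1^*\phi=g_2^*\phi$ for $\mu\otimes\mu$-almost every $(g_1,g_2)$.

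I do not expect a deep obstacle here; the points that require genuine care — rather than the formal computations above — are the strong measurability and integrability that make $f_\mu^*\phi$ a legitimate Bochner integral for an arbitrary $\phi\in L^2_{(1,0)}$, and the check that the displayed identity, proved first for smooth $\phi$, passes to the $L^2$ limit. For the latter it is enough to observe that both sides are continuous functions of $\phi\in L^2_{(1,0)}$ — the right-hand side because $f_\mu^*$ is bounded, and the left-hand side by dominated convergence using $\|g_1^*\phi-g_2^*\phi\|_{L^2}\le 2\|\phi\|_{L^2}$ — so that the characterization of the equality case is valid for every $\phi$ in $L^2_{(1,0)}$.
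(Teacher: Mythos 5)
Your proof is correct and takes essentially the same route as the paper: the core of both arguments is the variance identity expressing $\|\phi\|_{L^2}^2-\|f_\mu^*\phi\|_{L^2}^2$ as the $\mu\otimes\mu$-average of $\|g_1^*\phi-g_2^*\phi\|_{L^2}^2$, with your observation that each $g^*$ is an isometry of $L^2_{(1,0)}$ playing exactly the role of the paper's remark that $f_\mu^*$ preserves the total mass of the measure $i\phi\wedge\overline\phi$. The only difference is presentational: the paper records the identity (\ref{eq:CS}) pointwise, as an identity of measures on $\P^1$, and then integrates, whereas you work directly with Hilbert-space norms and the Bochner integral, adding the (welcome, if routine) measurability and density checks that the paper leaves implicit.
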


\begin{proof}
By a direct computation we have
\begin{align} \label{eq:CS}
f_\mu^*(i \phi \wedge \overline \phi) - i f^*_\mu \phi \wedge \overline{f^*_\mu \phi} &= \int_G g^*(i \phi \wedge \overline \phi) \, \diff \mu (g) - i \Big( \int_G g^* \phi \, \diff \mu (g) \Big) \wedge \overline{\Big( \int_G g^* \phi \, \diff \mu (g) \Big)} \nonumber \\ &= \frac{1}{2} \int_G \int_G i(g_1^*\phi - g_2^* \phi) \wedge \overline{(g_1^*\phi - g_2^* \phi)} \, \diff \mu(g_1) \diff \mu(g_2).
\end{align}

These identities are clear for smooth $\phi$. We obtain the general case by the density of smooth forms in $L^2_{(1,0)}$.

Notice that the right hand side of (\ref{eq:CS}) is a positive measure on $\P^1$. By integrating the left hand side of (\ref{eq:CS}) over $\P^1$ and using the fact that the action of $f_\mu^*$ on measures preserves the total mass we get $\|\phi\|^2_{L^2} - \|f_\mu^* \phi\|^2_{L^2} \geq 0$. This is the desired inequality. From (\ref{eq:CS}) it is also clear that  $\|f^*\phi\|_{L^2} = \|\phi\|_{L^2}$ if and only if  $g_1^*\phi = g_2^*\phi$ holds for all $(g_1,g_2)$ outside a set of zero measure for $\mu \otimes \mu$.
\end{proof}

Consider now the Sobolev space $W^{1,2}$ of real valued $L^1$ functions on $\P^ 1$ with finite  $\|\cdot\|_{W^{1,2}}$ norm, where
$$\|h\|_{W^{1,2}} := \Big| \int_{\P^1}  h \, \omegaFS \Big| + \|\partial h\|_{L^2}$$ 
and $\omegaFS$ stands for the Fubini-Study form on $\P^1$.

The following proposition was proved in \cite{DKW:correspondences}.

\begin{proposition}\label{prop:equinorm}
		Let $U$ be a non-empty open subset of $\P^1$. Then the following norms on $W^{1,2}$ are equivalent to the norm $\|\cdot\|_{W^{1,2}}$.
\vspace{-15pt}
\begin{multicols}{2}
\begin{enumerate}
        \item $\|h\|_1 := \|h\|_{L^1} +\|\partial h\|_{L^2}$ \medskip
        \item $ \|h\|_2 := \|h\|_{L^2}+\|\partial h\|_{L^2}$
        \columnbreak
        \vspace*{\fill}
        \item $\|h\|_3 := |\int_U h \, \omegaFS|+\|\partial h\|_{L^2}$ \medskip
        \item $\|h\|_4 := \int_U |h| \omegaFS+\|\partial h\|_{L^2}$.
\end{enumerate}
\end{multicols}
\end{proposition}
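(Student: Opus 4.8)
The plan is to reduce every statement to the classical Poincar\'e--Wirtinger inequality on the compact Riemann surface $\P^1$, after which only elementary H\"older-type estimates on the finite-measure space $(\P^1,\omegaFS)$ remain. Throughout, write $\bar h := \big(\int_{\P^1}\omegaFS\big)^{-1}\int_{\P^1} h\,\omegaFS$ for the mean value of $h$. The one nontrivial input I would establish (or simply quote, $\P^1$ being compact) is that there is a constant $C>0$ with
\[
\|h-\bar h\|_{L^2}\ \leq\ C\,\|\partial h\|_{L^2}\qquad\text{for all }h\in W^{1,2},
\]
which follows from the pointwise identity $|dh|^2=2|\partial h|^2$ valid for real-valued $h$ together with the spectral gap of the Laplacian on $\P^1$ (proved, e.g., by a Rellich compactness and contradiction argument applied to mollifications, which at the same time shows that any $h\in L^1$ with $\partial h\in L^2$ automatically lies in $L^2$). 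Everything below is then bookkeeping with the triangle and Cauchy--Schwarz inequalities.

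Since the summand $\|\partial h\|_{L^2}$ is common to all five norms, for each $i$ it suffices to compare the ``zeroth order'' parts two-sidedly against $\big|\int_{\P^1} h\,\omegaFS\big|$. For the bounds $\|h\|_i\lesssim\|h\|_{W^{1,2}}$: the Poincar\'e inequality and $\mathrm{vol}(\P^1)<\infty$ give $\|h\|_{L^2}\leq \|h-\bar h\|_{L^2}+|\bar h|\,\mathrm{vol}(\P^1)^{1/2}\lesssim \|\partial h\|_{L^2}+\big|\int_{\P^1}h\,\omegaFS\big|$, and the right-hand side also dominates $\|h\|_{L^1}$, hence $\int_U|h|\,\omegaFS$ and $\big|\int_U h\,\omegaFS\big|$ as well. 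For the reverse bounds $\|h\|_{W^{1,2}}\lesssim\|h\|_i$ one only has to recover $\big|\int_{\P^1}h\,\omegaFS\big|$: for $\|\cdot\|_1$ and $\|\cdot\|_2$ this is immediate from $\big|\int_{\P^1}h\,\omegaFS\big|\leq\|h\|_{L^1}\leq \mathrm{vol}(\P^1)^{1/2}\|h\|_{L^2}$, while for $\|\cdot\|_3$ and $\|\cdot\|_4$ I would write $\bar h\,\mathrm{vol}(U)=\int_U h\,\omegaFS-\int_U(h-\bar h)\,\omegaFS$, so that $|\bar h|\,\mathrm{vol}(U)\leq \big|\int_U h\,\omegaFS\big|+\|h-\bar h\|_{L^1}\leq \big|\int_U h\,\omegaFS\big|+C'\|\partial h\|_{L^2}$, whence $\big|\int_{\P^1}h\,\omegaFS\big|=|\bar h|\,\mathrm{vol}(\P^1)$ is controlled by $\|h\|_3$ (here $\mathrm{vol}(U)>0$ since $U$ is nonempty and open). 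Chaining these inequalities, and using $\|\cdot\|_3\leq\|\cdot\|_4\lesssim\|\cdot\|_{W^{1,2}}$ to close the case $i=4$, yields the equivalence of all five norms.

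The only real obstacle is the Poincar\'e--Wirtinger estimate in the first paragraph; the rest is routine manipulation. I would also note, for completeness, that $\|\cdot\|_3$ and $\|\cdot\|_4$ are genuine norms: if $\|\partial h\|_{L^2}=0$ then $h$ is constant on the connected manifold $\P^1$, and then $\int_U h\,\omegaFS=0$ (resp. $\int_U|h|\,\omegaFS=0$) forces $h\equiv 0$.
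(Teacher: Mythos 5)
Your proof is correct, and it is the standard route: the paper itself does not prove this proposition but cites \cite{DKW:correspondences}, and your reduction to the Poincar\'e--Wirtinger inequality on $\P^1$ (together with the needed observation that $h\in L^1$ with $\partial h\in L^2$ automatically lies in $L^2$) is exactly the mechanism the paper relies on implicitly, e.g.\ the Poincar\'e--Sobolev estimate $\|h_n\|_{L^2}\leq A_1\|\partial h_n\|_{L^2}$ for mean-zero $h_n$ used in the proof of Theorem \ref{thm:exponential-convergence}. The remaining comparisons ($|\int_U h\,\omegaFS|$ and $\int_U|h|\,\omegaFS$ versus $|\int_{\P^1}h\,\omegaFS|$, using $\mathrm{vol}(U)>0$) are handled correctly.
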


Here and in what follows $\|g\| := \sup_{v \in \C^2 \setminus \{0\}} \frac{\|g \cdot v\|}{ \|v\|}$ will denote the operator norm of the matrix $g$. Notice that, for $g \in \SL_2(\C)$, we have $\|g\| \geq 1$ and $\|g^{-1}\| = \|g\|$. This follows from Cartan's decomposition (see the proof of Lemma \ref{lemma:jacobian-estimate} below).

\begin{proposition} \label{prop:f^*-bounded}
Let $f_\mu$ be the generalized correspondence associated with $\mu$ on $G=\PSL_2(\C)$. Assume that $\int_G \log\|g\|\, \diff \mu(g) < + \infty$. Then the transfer operator $f_\mu^*$ acting on smooth functions extends to a bounded linear operator from $W^{1,2}$ to itself.
\end{proposition}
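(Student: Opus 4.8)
The plan is to estimate the $W^{1,2}$ norm of $f_\mu^*h$ in terms of that of $h$, using the equivalence of norms from Proposition \ref{prop:equinorm} so that we only need to control $\|\partial(f_\mu^*h)\|_{L^2}$ together with one zeroth-order quantity. First I would observe that $f_\mu^*$ preserves constants and commutes with $\partial$ in the appropriate sense: since $f_\mu^*h(x)=\int_G h(g\cdot x)\,\diff\mu(g)$, we have $\partial(f_\mu^* h) = \int_G g^*(\partial h)\,\diff\mu(g) = f_\mu^*(\partial h)$, where now $f_\mu^*$ on the right is the operator on $(1,0)$-forms from Proposition \ref{prop:CS-ineq}. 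That proposition gives $\|f_\mu^*(\partial h)\|_{L^2}\le \|\partial h\|_{L^2}$ for free, so the gradient part is immediately controlled. The zeroth-order part, say $\big|\int_{\P^1} f_\mu^* h\,\omegaFS\big|$, needs to be compared with $\|h\|_{W^{1,2}}$; here is where a moment condition must enter, because $f_\mu^*\omegaFS = \int_G g^*\omegaFS\,\diff\mu(g)$ is a positive measure whose density with respect to $\omegaFS$ can be large near points where some $g$ has strong expansion, and one needs the logarithmic bound on $\|g\|$ to integrate this against $h$.

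The key steps, in order, would be: (1) Prove the commutation $\partial f_\mu^* h = f_\mu^*(\partial h)$ for smooth $h$ and deduce $\|\partial f_\mu^* h\|_{L^2}\le\|\partial h\|_{L^2}$ from Proposition \ref{prop:CS-ineq}. (2) Bound $\big|\int_{\P^1} f_\mu^*h\,\omegaFS\big| = \big|\int_{\P^1} h\,(f_\mu)_*\omegaFS\big|$ — rewriting by duality — where $(f_\mu)_*\omegaFS = \int_G g_*\omegaFS\,\diff\mu(g)$. Actually it is cleaner to write $\int_{\P^1} f_\mu^* h\,\omegaFS = \int_G\big(\int_{\P^1} h\,g_*\omegaFS\big)\diff\mu(g)$ and estimate $\int_{\P^1} |h|\, g_*\omegaFS$ for a fixed $g$. (3) For fixed $g\in\SL_2(\C)$, use Cartan's decomposition $g = k_1\,\mathrm{diag}(\lambda,\lambda^{-1})\,k_2$ with $\lambda = \|g\|\ge 1$ and $k_i$ unitary, to get an explicit formula/estimate for the density of $g_*\omegaFS$ with respect to $\omegaFS$; one finds this density is bounded pointwise by $\|g\|^2$ and, more usefully, that $g_*\omegaFS$ restricted to any fixed small disc has mass $\lesssim \log\|g\|$ away from the single "attracting" point, or alternatively that $\int_{\P^1}|h|\,g_*\omegaFS \le C\,\|h\|_{W^{1,2}}\,(1 + \log\|g\|)$ by combining the Sobolev embedding $W^{1,2}\hookrightarrow$ (functions with controlled logarithmic growth of averages over shrinking discs) with the mass distribution of $g_*\omegaFS$. (4) Integrate in $g$: since $\int_G\log\|g\|\,\diff\mu(g)<+\infty$, the bound $C\,\|h\|_{W^{1,2}}(1+\log\|g\|)$ is $\mu$-integrable, giving $\big|\int f_\mu^* h\,\omegaFS\big|\le C'\|h\|_{W^{1,2}}$. (5) Combine with step (1): $\|f_\mu^* h\|_{W^{1,2}} = \big|\int f_\mu^*h\,\omegaFS\big| + \|\partial f_\mu^* h\|_{L^2}\le C'\|h\|_{W^{1,2}} + \|\partial h\|_{L^2}\le (C'+1)\|h\|_{W^{1,2}}$, and extend to all of $W^{1,2}$ by density of smooth functions.

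The main obstacle is step (3): getting the right dependence on $\|g\|$ in the estimate $\int_{\P^1}|h|\,g_*\omegaFS \lesssim \|h\|_{W^{1,2}}(1+\log\|g\|)$. The naive pointwise bound on the density of $g_*\omegaFS$ is $\|g\|^2$, which is far too lossy; the point is that the mass of $g_*\omegaFS$ is highly concentrated — for $g = k_1\,\mathrm{diag}(\lambda,\lambda^{-1})\,k_2$, the measure $g_*\omegaFS$ puts mass $\sim\lambda^{-2}$ near the repelling direction and spreads the remaining mass $\sim 1$ over a region of size $\sim 1$, but with a density that behaves like $r^{-2}$ out to radius $\sim 1$ near the attracting point $k_1\cdot[e_1]$, where $r$ is the distance to that point. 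Integrating $|h|\cdot r^{-2}$ over a disc, and using that $W^{1,2}$ functions on a surface have averages on $\DD(a,s)$ growing at most like $|\log s|$ (this is exactly the borderline Sobolev/Trudinger–Moser regime, and it is where the specific form of the $W^{1,2}$ norm and Proposition \ref{prop:equinorm} are used), produces the $\log\|g\|$ factor rather than a power of $\|g\|$. Making this concentration estimate precise and uniform in $g$ — ideally by an explicit change of variables reducing to the diagonal case and a one-variable computation — is the technical heart of the proof; everything else is formal manipulation of currents plus the integrability hypothesis on $\mu$.
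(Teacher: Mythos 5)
Your overall skeleton matches the paper's: control $\|\partial f_\mu^*h\|_{L^2}$ by the contraction of Proposition \ref{prop:CS-ineq} (identical to the paper), reduce the remaining zeroth-order quantity via Proposition \ref{prop:equinorm}, prove a per-$g$ bound of the form $C\|h\|_{W^{1,2}}(1+\log\|g\|)$, and integrate against $\mu$ using the first moment hypothesis. Where you diverge is in how that per-$g$ logarithmic bound is obtained. The paper gets it in three lines: with $\mathcal F$ the unit ball of $W^{1,2}$, Moser--Trudinger gives $\int e^{\alpha\varphi^2}\omegaFS\leq A$, and then Jensen's inequality applied to the probability measure $\omegaFS$ yields $\exp\big(\alpha\int(g^*\varphi)^2\omegaFS\big)\leq\int e^{\alpha\varphi^2}\,g_*\omegaFS\leq\|g\|^4\int e^{\alpha\varphi^2}\omegaFS\leq A\|g\|^4$, so $\|g^*\varphi\|_{L^2}\lesssim 1+\log\|g\|$. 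Note that the "naive" pointwise bound $g_*\omegaFS\leq\|g\|^4\omegaFS$ that you dismiss as too lossy is in fact exactly what the paper uses; the point is to apply it to $e^{\alpha\varphi^2}$ rather than to $|\varphi|$, so the power of $\|g\|$ ends up inside a logarithm. Your alternative — a dyadic analysis of the concentration of $g_*\omegaFS$ near the attracting point combined with the logarithmic growth of averages of $W^{1,2}$ functions on small discs — can be made to work (indeed it even gives $\sqrt{\log\|g\|}$ for the relevant $L^1$ quantity), but it is the laborious route, and its key analytic input is again the borderline Moser--Trudinger/Poincaré regime, so you gain nothing over the paper's argument.

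Two quantitative slips in your sketch of step (3) would need fixing if you carried it out. With Cartan's decomposition $g=k_1\,\mathrm{diag}(\lambda,\lambda^{-1})\,k_2$, $\lambda=\|g\|$, the map acts as $z\mapsto\lambda^2 z$, so the pointwise density bound is $\|g\|^4$, not $\|g\|^2$ (this is Lemma \ref{lemma:jacobian-estimate}); the mass of $g_*\omegaFS$ near the repelling point is of order $\|g\|^{-4}$, and near the attracting point the density with respect to $\omegaFS$ behaves like $\|g\|^{-4}r^{-4}$ for $\|g\|^{-2}\lesssim r\lesssim 1$ (saturating at $\|g\|^{4}$ for $r\lesssim\|g\|^{-2}$), not like $r^{-2}$. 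These errors do not change the qualitative conclusion — the dyadic summation still produces a factor $O(\log\|g\|)$ — but since you yourself identify this computation as the technical heart, it is worth knowing that the paper's Jensen trick bypasses it entirely.
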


For the proof we need some preliminary results.

\begin{lemma} \label{lemma:jacobian-estimate}
We have  $g^* \omegaFS \leq \|g\|^4 \cdot \omegaFS$ and $g_* \omegaFS \leq \|g\|^4 \cdot \omegaFS$  for every $g \in \PSL_2(\C)$. 
\end{lemma}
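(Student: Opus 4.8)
The plan is to prove the estimate $g^*\omegaFS \leq \|g\|^4 \cdot \omegaFS$ by reducing to the diagonal case via Cartan's decomposition and then doing an explicit computation in the affine chart. First I would recall that any $g \in \SL_2(\C)$ can be written as $g = k_1 a k_2$ where $k_1, k_2 \in \SU(2)$ and $a = \diag(\lambda, \lambda^{-1})$ with $\lambda \geq 1$ real; moreover $\|g\| = \lambda$, which also makes clear the remarks stated in the excerpt that $\|g\| \geq 1$ and $\|g^{-1}\| = \|g\|$. Since $\omegaFS$ is invariant under the action of $\SU(2)$ (the action is by isometries of the Fubini--Study metric), we have $k_i^* \omegaFS = \omegaFS$, and therefore $g^* \omegaFS = k_2^* a^* k_1^* \omegaFS = k_2^*(a^* \omegaFS)$. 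Hence it suffices to show $a^* \omegaFS \leq \lambda^4 \omegaFS$ pointwise, since applying $k_2^*$ (an isometry) preserves such an inequality between positive forms.

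Next I would carry out the computation for the diagonal matrix. In the affine coordinate $z$ on $\P^1 = \C \cup \{\infty\}$, the map induced by $a = \diag(\lambda, \lambda^{-1})$ is $z \mapsto \lambda^2 z$. The Fubini--Study form is $\omegaFS = \frac{i}{2\pi} \frac{dz \wedge d\bar z}{(1+|z|^2)^2}$ (up to a normalizing constant that is irrelevant for the inequality). Pulling back under $z \mapsto \lambda^2 z$ gives $a^*\omegaFS = \frac{i}{2\pi}\frac{\lambda^4\, dz \wedge d\bar z}{(1+\lambda^4|z|^2)^2}$. So the ratio $a^*\omegaFS / \omegaFS$ equals $\lambda^4 \big(\tfrac{1+|z|^2}{1+\lambda^4|z|^2}\big)^2$, and since $\lambda \geq 1$ we have $1 + \lambda^4|z|^2 \geq 1 + |z|^2$, so this ratio is at most $\lambda^4$. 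This proves $a^*\omegaFS \leq \lambda^4 \omegaFS$, hence $g^*\omegaFS \leq \|g\|^4 \omegaFS$. The statement for $g_*$ follows immediately: $g_* \omegaFS = (g^{-1})^* \omegaFS \leq \|g^{-1}\|^4 \omegaFS = \|g\|^4 \omegaFS$, using $\|g^{-1}\| = \|g\|$.

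I do not expect a serious obstacle here; the only points requiring a little care are bookkeeping ones: confirming that $\|g\| = \lambda$ for the Cartan form (immediate from the singular value decomposition), checking that the $\SU(2)$-invariance of $\omegaFS$ is being invoked correctly, and making sure the affine-chart computation is valid on the dense open set $\C \subset \P^1$, with the inequality then extending to all of $\P^1$ by continuity of both sides (both $g^*\omegaFS$ and $\omegaFS$ are smooth forms on $\P^1$). If one prefers to avoid coordinates, one could alternatively argue that $a^*\omegaFS$ and $\omegaFS$ are both $\U(1)$-invariant positive $(1,1)$-forms on $\P^1$ and compare them on the fixed points and on a single orbit, but the direct affine computation is cleanest. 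The factor $4$ — rather than $2$ — comes precisely from the fact that $\diag(\lambda, \lambda^{-1})$ acts on $\P^1$ as multiplication by $\lambda^2$, squaring the singular value before it gets squared again by the degree-two nature of the area form.
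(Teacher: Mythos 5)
Your proof is correct and follows essentially the same route as the paper: Cartan's decomposition $g = k_1 a k_2$ with $k_i \in \SU(2)$ and $a = \diag(\lambda,\lambda^{-1})$, the $\SU(2)$-invariance of $\omegaFS$ to reduce to the diagonal case, the explicit affine-chart computation giving $a^*\omegaFS \leq \lambda^4\,\omegaFS$, and then $g_*\omegaFS = (g^{-1})^*\omegaFS$ together with $\|g^{-1}\| = \|g\|$ for the second inequality. No gaps; the extra remarks on continuity and on where the exponent $4$ comes from are fine but not needed.
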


\begin{proof}
Using Cartan's decomposition we can write any element $g$ in $\PSL_2(\C)$ as $g = k a  k'$, where $k,k' \in \SU(2)$ and $a= \left(\begin{smallmatrix} \lambda & 0 \\ 0 & \lambda^{-1} \end{smallmatrix}\right)$, for some $\lambda \geq 1$. We see that $\|g\| = \lambda$. Since $\SU(2)$ preserves $\omegaFS$ and $\|g\| = \|a\|$ we can assume that $g$ is of the form $\left(\begin{smallmatrix} \lambda & 0 \\ 0 & \lambda^{-1} \end{smallmatrix}\right)$, for some $\lambda \geq 1$.

In the standard affine coordinate of $\P^1$ we have $\omegaFS = \frac{i}{2\pi} \frac{\diff z \wedge \diff \bar z}{(1+|z|^2)^2}$ and $g(z) = \lambda^2 z$, so $$g^* \omegaFS = \lambda^4 \frac{i}{2\pi} \frac{\diff z \wedge \diff \bar z}{(1+ \lambda^4|z|^2)^2} \leq  \lambda^4 \frac{i}{2\pi} \frac{\diff z \wedge \diff \bar z}{(1+ |z|^2)^2} = \lambda^4 \omegaFS = \|g\|^4 \omegaFS,$$
which proves the first inequality.

For the second inequality we apply the above argument for $g^{-1}$ instead of $g$ and use that $\|g^{-1}\| = \|g\|$.
\end{proof}

The following exponential estimate will be crucial for us. It will also be used in the proof of Theorem \ref{thm:equidistribution-images} in Section \ref{sec:equidistribution} and will be important in Section \ref{sec:regularity}.

\begin{proposition}[Moser-Trudinger estimate \cite{moser:trudinger}] \label{prop:exponential-estimate}
Let $\mathcal F$ be a bounded family in $W^{1,2}$. Then there are constants $A>0$ and $\alpha > 0$, depending on ${\mathcal F}$, such that 
$$\int_{\P^1} e^{\alpha \varphi^2} \omegaFS  \leq A \quad \text{for every} \quad \varphi \in \mathcal F.$$
\end{proposition}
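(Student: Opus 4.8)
The plan is to reduce the statement to the classical Moser--Trudinger inequality on the compact Riemann surface $\P^1$ equipped with the Fubini--Study metric. Recall that this inequality provides constants $\beta_0>0$ and $C_0>0$, depending only on $(\P^1,\omegaFS)$, such that every $\psi\in W^{1,2}$ with $\int_{\P^1}\psi\,\omegaFS=0$ and $\|\partial\psi\|_{L^2}\le 1$ satisfies $\int_{\P^1}e^{\beta_0\psi^2}\,\omegaFS\le C_0$; this is the content of \cite{moser:trudinger} once one translates the usual formulation in terms of the Riemannian gradient into the complex notation, using that $|\nabla\psi|^2$ is a fixed constant multiple of $|\partial\psi|^2$ pointwise on a Riemann surface and hence $\|\nabla\psi\|_{L^2}^2=c\,\|\partial\psi\|_{L^2}^2$ for an explicit $c>0$.

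For the reduction, first fix $M\ge 1$ with $\|\varphi\|_{W^{1,2}}\le M$ for all $\varphi\in\mathcal F$, which is possible since $\mathcal F$ is bounded. Given $\varphi\in\mathcal F$, write $c_\varphi:=\int_{\P^1}\varphi\,\omegaFS$, so that $|c_\varphi|\le M$, and put $\psi:=\varphi-c_\varphi$; then $\psi$ has zero mean and $\|\partial\psi\|_{L^2}=\|\partial\varphi\|_{L^2}\le M$. Using the elementary inequality $\varphi^2=(c_\varphi+\psi)^2\le 2c_\varphi^2+2\psi^2\le 2M^2+2\psi^2$ we get, for any $\alpha>0$,
\[
\int_{\P^1}e^{\alpha\varphi^2}\,\omegaFS\ \le\ e^{2\alpha M^2}\int_{\P^1}e^{2\alpha\psi^2}\,\omegaFS .
\]
Now $\psi/M$ has zero mean and $\|\partial(\psi/M)\|_{L^2}\le 1$, so the classical inequality applied to $\psi/M$ gives $\int_{\P^1}e^{\beta_0\psi^2/M^2}\,\omegaFS\le C_0$. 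Choosing $\alpha:=\beta_0/(2M^2)$, so that $2\alpha\psi^2=\beta_0\psi^2/M^2$ pointwise, we obtain
\[
\int_{\P^1}e^{\alpha\varphi^2}\,\omegaFS\ \le\ e^{2\alpha M^2}C_0\ =\ e^{\beta_0}C_0\ =:\ A ,
\]
and both $\alpha$ and $A$ depend only on $M$, hence only on $\mathcal F$. This is the desired bound.

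I do not expect a serious obstacle here: the only delicate point is matching the standard statement of the Moser--Trudinger inequality (typically phrased with the Riemannian gradient, a mean-zero normalization, and on Euclidean domains or abstract surfaces) with the formulation used in this paper on $(\P^1,\omegaFS)$ in terms of $\|\partial\varphi\|_{L^2}$, which amounts to bookkeeping of constants. If a self-contained argument is preferred, one may instead expand $e^{\alpha\varphi^2}=\sum_{k\ge 0}\alpha^k\varphi^{2k}/k!$ and invoke the two-dimensional Gagliardo--Nirenberg estimate $\|\varphi\|_{L^{2k}}\le c\sqrt{k}\,\|\varphi\|_{W^{1,2}}$; the $\sqrt{k}$-growth of the constant is exactly what makes the resulting series $\sum_k \alpha^k (c^2 k M^2)^k/k!$ converge for $\alpha$ small enough, uniformly over $\mathcal F$, which again yields the claim.
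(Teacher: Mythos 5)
Your argument is correct, and it matches the paper's treatment in the only sense possible: the paper gives no proof of this proposition at all, simply citing Moser's classical inequality, and your reduction (centering by $c_\varphi=\int_{\P^1}\varphi\,\omegaFS$, rescaling by the bound $M$, and the elementary inequality $\varphi^2\le 2c_\varphi^2+2\psi^2$) is precisely the routine bookkeeping needed to pass from the classical mean-zero, unit-Dirichlet-norm formulation on $(\P^1,\omegaFS)$ to the statement for a bounded family in $W^{1,2}$. The alternative self-contained route you sketch, expanding $e^{\alpha\varphi^2}$ and using the two-dimensional bound $\|\varphi\|_{L^{2k}}\lesssim\sqrt{k}\,\|\varphi\|_{W^{1,2}}$, is also sound (it yields a non-sharp $\alpha$, which is all the proposition requires).
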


\begin{proof}[Proof of Proposition \ref{prop:f^*-bounded}]
We need to show that $\|f_\mu ^* \varphi \|_{W^{1,2}}$ is uniformly bounded in $\varphi$ if $\|\varphi\|_{W^{1,2}} \leq 1$. For such $\varphi$ we have, from Proposition \ref{prop:CS-ineq}, that $\|\del f^*_\mu (\varphi)\|_{L^2} \leq 1$, so using Proposition \ref{prop:equinorm} it remains to check that $\|f_\mu ^* \varphi \|_{L^2}$ is uniformly bounded.

Let $\alpha$ and $A$ be as in Proposition \ref{prop:exponential-estimate} for $\mathcal F= \{\varphi: \|\varphi\|_{W^{1,2}} \leq 1\}$.
From Jensen's inequality and  Lemma \ref{lemma:jacobian-estimate}  we have $$\exp\left( \int_{\P^1} \alpha (g^*\varphi)^2 \omegaFS \right)\leq \int_{\P^1} e^{\alpha (g^* \varphi)^2} \omegaFS = \int_{\P^1} e^{\alpha \varphi^2} g_*\omegaFS \leq \|g\|^4 \int_{\P^1} e^{\alpha \varphi^2} \omegaFS \leq A \|g\|^4.$$ Taking the logarithm gives
\begin{equation} \label{eq:g*phi-L2norm}
\|g^* \varphi\|_{L^2} \leq A_2 + A_3 \log\|g\|,
\end{equation}
for some constants $A_2,A_3 >0$.

By Cauchy-Schwarz we have $$\|f^*_\mu \varphi\|_{L^2} = \Big \|\int_G g^* \varphi \, \diff \mu(g) \Big\|_{L^2} \leq \int_G \|g^* \varphi\|_{L^2} \, \diff \mu(g).$$ Since $\int_G \log\|g\| \, \diff \mu(g)$ is finite by assumption, it follows that $\|f^*_\mu \varphi\|_{L^2} \leq A_4$ for every $\varphi \in \mathcal F$ for some constant $A_4 > 0$. This finishes the proof.
\end{proof}

\subsection{Non-elementary measures}

Let $\mu$ be a probability measure on $\PSL_2(\C)$. We will denote by $T_\mu$ the smallest closed sub-semigroup of  $\PSL_2(\C)$ containing the support of $\mu$.

\begin{definition} \label{def:elementary}
Let $R$ be a subset of $\PSL_2(\C)$. We say that $R$ is \textbf{elementary} if either $R$ is conjugated to a subset of $\PSU(2)$ or if there is a finite subset of $\P^1$ which is invariant by every element of $R$.
We say that a probability measure $\mu$ on $\PSL_2(\C)$ is elementary if $\supp (\mu)$ is an elementary set.
\end{definition}

\begin{remark} \label{rmk:elementary}
(i) It is easy to see that $R$ is elementary if and only if the closed semigroup generated by $R$ is elementary. In particular $\mu$ is elementary if and only if $T_\mu$ is elementary.

(ii) A subset $R$ of $\PSL_2(\C)$ is conjugated to a subset of $\PSU(2)$ if and only if the group generated by $R$ is relatively compact. This follows from the fact that if the semigroup generated by $R$ is relatively compact then there exists an $R$-invariant inner product on $\C^2$, obtained by averaging the standard inner product.

(iii) We can view $\P^1$ as the boundary of the $3$-dimensional hyperbolic space $\mathbb H ^3$. Then, any M\"obius transformation of $\P^1$ extends to a homeomorphism of $\overline{\mathbb H^3} = \mathbb H^3 \cup \P^1$, called the Poincar\'e extension, that preserves the standard hyperbolic metric on $\mathbb H ^3$. In this context, a subset $R$ of $\PSL_2(\C)$ is elementary if and only if it admits a finite orbit in $\overline{\mathbb H^3}$, see \cite{beardon}.
\end{remark}

The following result is probably well-known. We include a proof for the convenience of the reader. Recall that an element $g$ of $\Aut(\P^1)$ different from the identity is conjugated to either $z \mapsto z + 1$ or $z \mapsto \lambda z$ for some $\lambda \in \C \setminus \{0,1\}$. In  the former case, $g$ is called \textit{parabolic} and in the latter, $g$ is called \textit{elliptic} if $|\lambda| = 1$ or \textit{loxodromic} if $|\lambda| \neq 1$, see also the appendix below.

\begin{lemma} \label{lemma:non-elementary}
Let $n \geq 1$. Then $\mu$ is non-elementary if and only if $\mu^{*n}$ is non elementary.
\end{lemma}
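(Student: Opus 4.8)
The plan is to prove both implications by translating ``non-elementary'' into the two defining conditions (no finite invariant set on $\P^1$; the generated semigroup is not relatively compact) and checking each one separately, using that $T_{\mu^{*n}} \subseteq T_\mu$ and that $T_\mu$ is, up to closure, generated by powers and products of elements of $T_{\mu^{*n}}$ together with Remark \ref{rmk:elementary}(i), which lets us pass freely between $\mu$ and the semigroup $T_\mu$.

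First I would treat the ``finite invariant set'' part. If $\mu$ is elementary because some finite set $F \subset \P^1$ is invariant under every $g \in \supp(\mu)$, then $F$ is invariant under every element of $\supp(\mu^{*n})$, since such elements are products $g_n \cdots g_1$ with each $g_i \in \supp(\mu)$; hence $\mu^{*n}$ is elementary. Conversely, if $\mu^{*n}$ has a finite invariant set $F$, I want to produce a finite invariant set for $\mu$. The clean way is to pass to the semigroup: $T_{\mu^{*n}}$ is generated (as a closed semigroup) by $\supp(\mu)^n$, i.e.\ by all length-$n$ words; but the closed semigroup generated by $\supp(\mu)$ is also generated by the union $\bigcup_{k\geq 1}\supp(\mu)^{kn}$ together with a bounded number of ``extra'' letters. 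More efficiently, one uses a standard fact (cf.\ Remark \ref{rmk:elementary}): an element $h$ of $\Aut(\P^1)$ preserves a finite subset of $\P^1$ iff $h$ is elliptic or the identity, OR $h$ is loxodromic/parabolic and the finite set is contained in $\{$fixed points of $h\}$. So if every $g^n$ ($g \in \supp\mu$ after reduction to the semigroup, more precisely every element of $\supp(\mu^{*n})$) fixes $F$ pointwise or setwise, then each $g \in T_\mu$ either is elliptic/identity, or is loxodromic/parabolic with its fixed-point set meeting $F$; a short argument using that $F$ is finite and the semigroup structure forces a common finite invariant set for all of $T_\mu$. The technically cleanest route is Remark \ref{rmk:elementary}(iii): $R$ is elementary iff it has a finite orbit in $\overline{\mathbb H^3}$; a finite orbit of $T_{\mu^{*n}}$ in $\overline{\mathbb H^3}$ yields one for $T_\mu$ because the $T_\mu$-orbit of a point with finite $T_{\mu^{*n}}$-orbit is a finite union of $T_{\mu^{*n}}$-orbits (there are only finitely many cosets-type classes modulo length $n$), hence finite.

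Next I would handle the ``relatively compact'' part. The closed semigroup $T_{\mu^{*n}}$ is relatively compact iff $T_\mu$ is: one direction is immediate since $T_{\mu^{*n}} \subseteq T_\mu$; for the other, if $T_{\mu^{*n}}$ is relatively compact then $\overline{T_{\mu^{*n}}}$ is a compact semigroup, hence a group (a compact semigroup in a Lie group is a group), so it equals its closure and is conjugate into $\PSU(2)$ by Remark \ref{rmk:elementary}(ii); then the set $\supp(\mu) \cup \supp(\mu)^2 \cup \cdots \cup \supp(\mu)^n$ together with $T_{\mu^{*n}}$ generates $T_\mu$, and since $T_{\mu^{*n}}$ is bounded and $\supp(\mu)^k$ for $k<n$ is within bounded ``distance'' of $T_{\mu^{*n}}$... here one must be a little careful, because $\supp(\mu)^k$ for $k<n$ need not be bounded a priori. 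The correct argument: if $T_{\mu^{*n}}$ is relatively compact, pick any $g\in\supp(\mu)$; then $g^n, g^{2n}, \dots \in T_{\mu^{*n}}$ stay in a compact set, which forces $\|g^{n}\|$ bounded, hence (by the classification, a loxodromic element has $\|g^{kn}\|\to\infty$) $g$ is elliptic or identity, and moreover all of $\supp(\mu)$ lies in a relatively compact set: indeed for $g_1,g_2\in\supp(\mu)$ the elements $g_2 g_1 g_1 \cdots g_1$ ($n-1$ copies of $g_1$ after) lie in $T_{\mu^{*n}}$, so since the $g_1$-part is bounded one extracts that $g_2$ ranges in a bounded set. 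Averaging the standard inner product over the relatively compact semigroup $T_\mu$ (Remark \ref{rmk:elementary}(ii)) then shows $T_\mu$ is conjugate into $\PSU(2)$, so $\mu$ is elementary.

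Combining, $\mu$ is non-elementary $\iff$ $T_\mu$ has no finite orbit in $\overline{\mathbb H^3}$ and is not relatively compact $\iff$ $T_{\mu^{*n}}$ has no finite orbit in $\overline{\mathbb H^3}$ and is not relatively compact $\iff$ $\mu^{*n}$ is non-elementary. The main obstacle I anticipate is the converse direction of the ``finite invariant set'' case: showing that a finite set invariant under all length-$n$ words forces a finite set invariant under all of $T_\mu$. I expect the hyperbolic-geometry formulation in Remark \ref{rmk:elementary}(iii) (finite orbit in $\overline{\mathbb H^3}$), or equivalently a careful case analysis on the type (elliptic/parabolic/loxodromic) of the generators and their fixed-point configurations, to be the cleanest way to dispatch it; the rest is bookkeeping with semigroups and norms.
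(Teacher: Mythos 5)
Your easy direction ($\mu$ elementary $\Rightarrow$ $\mu^{*n}$ elementary) is fine, but the hard direction contains a genuine gap, and it sits exactly where you anticipated. Your key step is the claim that a finite $T_{\mu^{*n}}$-invariant set (or finite orbit in $\overline{\mathbb H^3}$) produces a finite $T_\mu$-orbit because ``there are only finitely many cosets-type classes modulo length $n$.'' This is only true when $\supp(\mu)$ is finite. In general a word of length $qn+r$ factors as (prefix of length $r<n$) composed with (word of length $qn$), and the prefixes range over $\supp(\mu)^r$, which can be infinite; the union of the translates $u\cdot F$ over all such prefixes $u$ need then not be finite, so the $T_\mu$-orbit of a point with finite $T_{\mu^{*n}}$-orbit can perfectly well be infinite. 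Indeed, when $|F|\leq 2$ and the support is infinite, it is not at all clear (and your sketch does not show) that a finite $\supp(\mu^{*n})$-invariant subset of $\P^1$ forces a finite $\supp(\mu)$-invariant subset of $\P^1$ rather than forcing elementarity through the compactness clause; your final equivalence chain silently assumes the two clauses match up clause-by-clause between $\mu$ and $\mu^{*n}$, which is precisely what needs proof. The fallback ``short argument using the semigroup structure'' is only asserted. There is also a smaller gap in the compactness part: after showing $\supp(\mu)$ is bounded you average ``over the relatively compact semigroup $T_\mu$,'' but boundedness of $\supp(\mu)$ does not give boundedness of the semigroup it generates (products of bounded matrices can have unbounded norm). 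This one is patchable: every word is a prefix of length $<n$ (bounded, being a product of at most $n-1$ elements of the bounded set $\supp(\mu)$) times an element of the compact group $\overline{T_{\mu^{*n}}}$, so $T_\mu$ is bounded; but that step is missing as written, as is the justification that the $g_1$-dependent bound can be made uniform (fixing a single $g_1$ does the job, but you should say so).

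For comparison, the paper avoids all of this prefix/coset bookkeeping by proving the contrapositive directly with loxodromic elements: if $\mu$ is non-elementary, Lemma \ref{Rn lox} gives a loxodromic $g_0\in T_\mu$, hence $g_0^n\in T_{\mu^{*n}}$ is loxodromic, so $T_{\mu^{*n}}$ is not relatively compact; and any finite $\supp(\mu^{*n})$-invariant set must be contained in $\Fix(g_0)$, and likewise in $\Fix(g_1)$ for a second loxodromic $g_1\in T_\mu$ with $\Fix(g_1)\cap\Fix(g_0)=\varnothing$ (Beardon), so it is empty. If you want to salvage your clause-by-clause approach you would need an argument of that type (or a genuine structural analysis of the stabilizer of a set with at most two points) to close the finite-invariant-set case for measures with infinite support.
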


\begin{proof}
Notice that $T_{\mu^{*n}} \subset T_\mu$, so if $\mu$ elementary then so is $\mu^{*n}$.

Suppose now that $\mu$ is non-elementary and fix $n$. It follows from Lemma \ref{Rn lox} in the Appendix that $T_\mu$ contains a loxodromic element $g_0$.  Then $T_{\mu^{*n}}$ contains a loxodromic element, namely, $g_0^n$. In particular  $T_{\mu^{*n}}$ is non-compact and cannot be conjugated to subset of $\PSU(2)$.

To finish the proof we need to show that $\supp (\mu^{*n})$ leaves no finite set invariant. Suppose $F \subset \P^1$ is finite and invariant by $\supp (\mu^{*n})$. Then $F$ is also invariant by $T_{\mu^{*n}}$. Since $\id \neq g^n_0 \in T_{\mu^{*n}}$ is loxodromic, this implies that $F \subset \Fix(g_0)$. As $\mu$ is non-elementary, we can find another loxodromic element  $g_1 \in T_\mu$ whose fix point set is disjoint from $\Fix(g_0)$ (see \cite[Thm.\ 5.1.3]{beardon}).  Repeating the preceding argument for $g_1$ gives $F \subset \Fix(g_1)$. This implies that $F = \varnothing$, completing the proof. 
\end{proof}

The main result of this section is the following. 

\begin{proposition}\label{prop:normlessthan1}
Let $\mu$ be a non-elementary probability measure on $\PSL_2(\C)$. Then there exists an $N \geq 1$ such that the norm of the operator $(f_\mu^N)^*:L^2_{(1,0)} \to L^2_{(1,0)}$ is strictly less than $1$.
\end{proposition}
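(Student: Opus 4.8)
The plan is to exploit the equality case in Proposition~\ref{prop:CS-ineq}. Suppose, for contradiction, that no such $N$ exists, i.e.\ that $\|(f_\mu^N)^*\|_{L^2_{(1,0)}} = 1$ for all $N \geq 1$. Note that the operator norm is always $\leq 1$ by Proposition~\ref{prop:CS-ineq} applied to the measure $\mu^{*N}$. Since the closed unit ball of $L^2_{(1,0)}$ is weakly compact and $(f_\mu^N)^*$ is a bounded self-map of it, a standard argument produces, for each $N$, a form $\phi_N$ with $\|\phi_N\|_{L^2} = 1$ and $\|(f_\mu^N)^*\phi_N\|_{L^2}$ close to $1$; but in fact it is cleaner to first observe that the norms $\|(f_\mu^n)^*\|$ are sub-multiplicative, hence if one of them is $< 1$ we are done, so we may assume $\|(f_\mu^n)^*\phi\|_{L^2} = \|\phi\|_{L^2}$ can be approached for every $n$. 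The key is to extract a single $\phi$ (or a weak limit) for which genuine equality $\|f_\mu^*\phi\|_{L^2} = \|\phi\|_{L^2}$ holds, so that the rigidity clause of Proposition~\ref{prop:CS-ineq} applies.

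A more robust route avoiding compactness subtleties: consider the self-adjoint positive operator $P := (f_\mu^*)^* f_\mu^*$ on the Hilbert space $L^2_{(1,0)}$, which has norm $\leq 1$. Its powers relate to $\mu^{*n}$ after accounting for the fact that $f_\mu^*$ is not self-adjoint, so instead I would work directly with the quadratic form and the spectral theorem: $\|f_\mu^*\| = 1$ forces $1 \in \sigma(P)$, and one shows that if $1$ is an approximate eigenvalue then there is an honest $\phi \neq 0$ with $f_\mu^*\phi = \phi$ up to the obstruction measured in \eqref{eq:CS}. Concretely, pick $\phi_k$ with $\|\phi_k\|_{L^2}=1$ and $\|f_\mu^*\phi_k\|_{L^2} \to 1$; then by \eqref{eq:CS} the positive measures $\tfrac12\int\int i(g_1^*\phi_k - g_2^*\phi_k)\wedge\overline{(g_1^*\phi_k - g_2^*\phi_k)}\,\diff\mu(g_1)\diff\mu(g_2)$ have total mass tending to $0$. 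Passing to a weak limit $\phi$ of $\phi_k$ (using that $f_\mu^*$ has closed range or arguing on the level of the potentials: a $(1,0)$-form of bounded $L^2$-norm on $\P^1$ can be written as $\partial u$ for $u$ in a bounded subset of $W^{1,2}$, where one has compactness), one gets $g_1^*\phi = g_2^*\phi$ for $\mu\otimes\mu$-a.e.\ $(g_1,g_2)$, hence $g^*\phi$ is $\mu$-a.e.\ equal to a fixed form $\phi_0$, and integrating gives $f_\mu^*\phi = \phi_0 = \phi$, i.e.\ $g^*\phi = \phi$ for $\mu$-a.e.\ $g$, hence for all $g \in \supp(\mu)$ by continuity, hence for all $g \in T_\mu$.

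The final step is to derive the contradiction from $g^*\phi = \phi$ for all $g \in T_\mu$, where $\phi \neq 0$ is a holomorphic-coefficient-free $L^2$ $(1,0)$-form that is $T_\mu$-invariant. Writing $\phi = h(z)\,\diff z$ in the affine chart, invariance under a one-parameter-rich subgroup is very restrictive: a nonzero meromorphic (indeed, the invariance propagates holomorphy off a small set) $(1,0)$-form on $\P^1$ invariant under an infinite subgroup of $\PSL_2(\C)$ forces the subgroup to fix the divisor of zeros/poles of $\phi$. Since any $(1,0)$-form on $\P^1$ has divisor of degree $-2$, its support is a set of at most two points (with multiplicity), which must then be invariant under all of $T_\mu$ --- contradicting non-elementarity (the loxodromic elements of $T_\mu$ supplied by Lemma~\ref{lemma:non-elementary} and the Appendix have no common invariant finite set, exactly as in the proof of Lemma~\ref{lemma:non-elementary}). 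Here one must be slightly careful that $\phi \in L^2$ rather than smooth; but $g^*\phi = \phi$ for a positive-dimensional family of $g$ shows $\phi$ is real-analytic away from finitely many points, so the divisor argument goes through.

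The main obstacle I anticipate is the passage from "norm equal to $1$ is approached" to "an honest invariant nonzero form exists" --- that is, the weak-compactness/closed-range step, ensuring the weak limit $\phi$ is nonzero and still satisfies the equality case rather than degenerating. The clean way to handle this is to replace the $(1,0)$-forms $\phi_k = \partial u_k$ by their potentials $u_k \in W^{1,2}$, normalized with $\int_{\P^1} u_k\,\omegaFS = 0$ and $\|\partial u_k\|_{L^2} = 1$; by Proposition~\ref{prop:equinorm} these form a bounded family in $W^{1,2}$, hence (by Rellich) a subsequence converges strongly in $L^2$ and weakly in $W^{1,2}$ to some $u$, and the vanishing of the defect measures forces $\partial u_k \to \partial u$ strongly in $L^2$, so $\|\partial u\|_{L^2} = 1$ and $u \neq 0$; then $\phi := \partial u$ does the job. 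Everything else is a combination of Proposition~\ref{prop:CS-ineq}, the already-established sub-multiplicativity, and the elementary geometry of $\PSL_2(\C)$ acting on $\P^1$.
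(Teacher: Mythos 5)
Your overall strategy (exploit the equality case of Proposition~\ref{prop:CS-ineq} under the contradiction hypothesis) is the right starting point, but the crucial extraction step is where the argument breaks, and your proposed fix does not repair it. The vanishing of the defect measures controls the differences $g_1^*\phi_k-g_2^*\phi_k$ in the \emph{group} variable for each fixed $k$; it gives no Cauchy property in $k$ and no obstruction to the weak limit of $\phi_k$ being $0$. Rellich applied to the potentials $u_k$ gives $u_k\to u$ strongly in $L^2$ but only $\partial u_k\rightharpoonup\partial u$ weakly, and the claim that the defect forces $\partial u_k\to\partial u$ strongly is unsupported. Worse, mass concentration is exactly what you should expect here: the paper's own analysis of this situation shows that the limit of the probability measures $i\phi_k\wedge\overline{\phi_k}$ is purely atomic (supported on the fixed points of a non-elliptic element), i.e.\ the maximizing forms concentrate and their weak $L^2$ limit vanishes. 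There is also a logical slip even if a strong limit $\phi\neq 0$ existed: the equality case of Proposition~\ref{prop:CS-ineq} only yields $g_1^*\phi=g_2^*\phi$ for $\mu\otimes\mu$-a.e.\ $(g_1,g_2)$, i.e.\ invariance of $\phi$ under the set $S=\{g_1g_2^{-1}\}$; your step ``$f_\mu^*\phi=\phi_0=\phi$, hence $g^*\phi=\phi$ for all $g\in T_\mu$'' is unjustified. The example $\mu=\delta_g$ with $g$ loxodromic is instructive: $\|(f_\mu^n)^*\|=1$ for all $n$ and the defect is identically zero, yet no nonzero $g^*$-invariant $L^2$ $(1,0)$-form exists (an easy annulus computation in the affine chart), and $g^*\phi\neq\phi$ for generic $\phi$. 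Since your extraction of an invariant form never uses non-elementarity, it would have to hold in this example too, which it cannot; so the gap is genuine and not cosmetic.

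The paper avoids precisely this obstacle by passing to the probability measures $i\phi_k\wedge\overline{\phi_k}$ on the compact space $\P^1$, where no mass can be lost: a subsequential weak-$*$ limit $m$ is a probability measure, and an $L^1$ Cauchy--Schwarz estimate plus dominated convergence transfers the approximate relation to $g_1^*m=g_2^*m$, giving, for each $n$, a probability measure $m_n$ invariant under $S^n$ (not an invariant form, and only under $S^n$, not under $T_\mu$). These measures are then shown to be atomic at the fixed points of a non-elliptic element of some $S^{N_2}$ (Lemma~\ref{Rn lox}), and a separate combinatorial case analysis on the maximal finite $S^n$-invariant sets $F^n$ is needed to upgrade $S$-invariance to invariance of a finite set under $R=\supp(\mu)$ itself, yielding elementarity. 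Your final step has the analogous gap ($S$-invariance of a two-point divisor does not immediately give $T_\mu$- or $\supp(\mu)$-invariance), and in addition the regularity claim that invariance forces $\phi$ to be real-analytic presumes a positive-dimensional family of symmetries, whereas $T_\mu$ may be countable. If you want to salvage your outline, you must replace the form-extraction by a measure-extraction of this kind and then supply the passage from $S^n$-invariant finite sets to $\supp(\mu)$-invariant finite sets.
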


\begin{proof}
For $n\geq 1$ introduce  $$R^n :=\{g_n \cdots g_2 g_1 : g_i\in \supp(\mu)\} \quad \text{ and } \quad S^n:=\{gh^{-1} : g,h \in R^n\}.$$ Notice that $R^n$ is a dense subset of the support of the $\mu^{*n}$.

	By Proposition \ref{prop:CS-ineq}, $\|(f^n_{\mu})^*\|\leq 1$ for every $n \geq 1$. Suppose by contradiction that  $\|(f^n_{\mu})^*\| = 1$ for every $n \geq 1$ . We will show that in this case $\mu$ must be elementary.
	
	Since $\|f_{\mu}^*\|= 1$,  there exists a sequence of  $(1,0)$-forms $\{\phi_n\}_{n\geq 0}$ such that $\|\phi_n\|_{L^2}=1$ and $\|f_\mu^*(\phi_n)\|_{L^2}\to 1$. By compactness, the sequence $\{i\phi_n\wedge\overline{\phi_n}\}_{n\geq 0}$ of probability measures admits a subsequence, which we still denote by $\{i\phi_n\wedge\overline{\phi_n}\}_{n\geq 0}$ for simplicity, that converges to a probability measure $m$.
	
	By the proof of Proposition \ref{prop:CS-ineq}, the measures $$\nu_n :=\int_{G \times G} i(g_1^*\phi_n - g_2^* \phi_n) \wedge \overline{(g_1^*\phi_n - g_2^* \phi_n)} \, \diff \mu(g_1) \diff \mu(g_2)$$ tend to zero as $n \to \infty$. In particular, 
		$$\|\nu_n\| = \int_{G \times G} \|g_1^*\phi_n - g_2^* \phi_n\|^2_{L^2}  \, \diff \mu(g_1) \diff \mu(g_2) \longrightarrow 0$$
as $n \to \infty$.
		
By Cauchy-Schwarz and the fact that $\|g^* \phi_n\|_{L^2} = \|\phi_n\|_{L^2}=1$ for $g \in \PSL_2(\C)$  we have
\begin{align*}
\|g_1^*(i\phi_n\wedge\overline{\phi_n}) - &g_2^*(i\phi_n\wedge\overline{\phi_n})\|_{L^1} =\|ig_1^*\phi_n \wedge \overline{(g_1^* \phi_n - g_2^* \phi_n)} + i(g_1^* \phi_n - g_2^* \phi_n) \wedge \overline{g_2^* \phi_n} \|_{L^1} \\
&\leq \|g_1^* \phi_n \|_{L^2} \|g_1^*\phi_n - g_2^* \phi_n \|_{L^2}+\|g_2^*\phi_n\|_{L^2}\|g_1^*\phi_n - g_2^*\phi_n\|_{L^2} \\
&= 2  \|g_1^*\phi_n - g_2^* \phi_n \|_{L^2},
\end{align*}
	 so	$$\int_{G \times G}  \|g_1^*(i\phi_n\wedge\overline{\phi_n})-g_2^*(i\phi_n\wedge\overline{\phi_n})\|_{L^1}^2 \, \diff \mu(g_1) \diff \mu(g_2) \longrightarrow 0$$
	 as $n \to \infty$.
	
By Lebesgue's dominated convergence theorem, it follows that $$\int_{G \times G}  \|g_1^*(m)-g_2^*(m)\|^2  \, \diff \mu(g_1) \diff \mu(g_2) = 0,$$
which implies that $g_1^*(m)=g_2^*(m)$  for $\mu \otimes \mu$ almost every $(g_1,g_2)$.

\vskip5pt
	
\noindent	\textbf{Claim:} $g_1^*(m)=g_2^*(m)$ for all $g_1,g_2\in\supp(\mu)$.

\vskip5pt

\noindent	Indeed, we know that $g_1^*(m)=g_2^*(m)$ holds for $g_1$ and $g_2$ on a set of full $\mu$-measure. Now, such a set is dense in the support of $\mu$ for the standard distance on $\PSL_2(\C)$ and $g \mapsto g^* m$ is continuous with respect to this distance. Hence $g_1^*(m)=g_2^*(m)$ for all $g_1,g_2\in\supp(\mu)$ and the claim is proved.
\vskip3pt
The claim is equivalent to
\begin{equation*}
(g_1 g_2^{-1})^* m = m \quad \text{for every} \quad g_1,g_2 \in \supp (\mu). 
\end{equation*}
Which means $m$ is invariant by $S: = S^1$.

Since we also have $\|(f_{\mu}^n)^*\|=1$ by assumption,  we can replace $f_{\mu}$ by $f_{\mu}^n$ in the above proof and get, for each $n \geq 1$, a probability measure $m_n$ invariant by $S^n$.
\vskip3pt
We can now finish the proof. After replacing $f$ by $f^{N_2}$, $R$ by $R^{N_2}$ and $S$ by $S^{N_2}$ for some $N_2$ we may assume that $S$ contains a non-elliptic element $g_0$ different from the identity. This is possible by Lemma \ref{Rn lox} from the appendix. By the above discussion, there exists a probability measure $m_1$ invariant by the pullback by every element of $S$. In particular $g_0^* m_1=m_1$ and by iteration $(g_0^n)^*m_1 = m_1$ for every $n \geq 1$. Making $n \to \infty$ implies that $m_1 = \alpha_1\delta_x+\beta_1\delta_y$, with  $\alpha_1$, $\beta_1 \geq 0$  and $\alpha_1 + \beta_1=1$, where $x$ and $y$ are the fix points of $g_0$ (if $g_0$ is parabolic we set $x=y$). Notice that $S \subset S^n$ for every $n \geq 1$ so the measure $m_n$ is also invariant by $g_0$. Hence $m_n=\alpha_n \delta_x+\beta_n \delta_y$ with $\alpha_n + \beta_n=1$.

We will show now that $\mu$ is elementary. Let $F^n$ be the largest finite $S^n$-invariant subset of $\P^1$. Notice that when $n \leq m$ we have $S^n \subset S^m$, hence $F^m \subset F^n$. We have $F^n \neq \varnothing$ for every $n \geq 1$, because $S^n$ preserves the atomic measure $m_n$. Also, since $F^1$ is invariant by $g_0$ we have $F^1 \subset \{x,y\}$. We separate in a few cases. 

\vskip3pt

\noindent \textsc{Case 1:} $F^1 = \{x\}$. In this case $S$ fixes $x$, which means that  there is a point $p$ such that $R$ maps $p$ to $x$. As $\varnothing \neq F^2 \subset F^1$ we have $F^2 = \{x\}$, so $S^2$ also fixes $x$. Hence there is a point $q$ such that $R^2$  maps $q$ to $x$. For $g \in R$ we have that $g^2 \in R^2$, so $g \cdot p = x = g^2 \cdot q $. Hence $g \cdot q = p$ for every $g \in R$. This implies that $p$ is $S$-invariant, so we must have $p=x$. We conclude that $g\cdot x =x $ for every $g \in R = \supp(\mu)$, so $\mu$ is elementary.

\vskip3pt

\noindent \textsc{Case 2:} $F^1 = \{x,y\}$ and $F^2 = \{x\}$ or $\{y\}$. In that case we can replace $f_\mu$ by $f_\mu^2$ and repeat the argument of Case 1, see also Lemma \ref{lemma:non-elementary}.

\vskip3pt

\noindent \textsc{Case 3:} $F^1 = \{x,y\}$ and $F^2 = \{x,y\}$. If $x=y$ we fall in Case 1, so we may assume $x \neq y$. In this case the set $\{x,y\}$ is $S$-invariant, which means that there are points $p,q$ such that $R$ maps $\{p,q\}$ to $\{x,y\}$ . Analogously, $\{x,y\}$ is $S^2$-invariant so there are points $r,s$ such that $R^2$ maps $\{r,s\}$ to $\{x,y\}$. For $g \in R$ we have that $g^2 \in R^2$, so $g \{p,q\} = \{x,y\} = g^2 \{r,s\} $. Hence $\{p,q\}= g\{r,s\}$ for every $g \in R$, which implies that $\{p,q\}$ is $S$-invariant. By the maximality of $F^1$ we get $\{x,y\} = \{p,q\}$. Hence  $R = \supp (\mu)$ maps $\{x,y\} $ to itself, so $\mu$ is elementary.

\vskip3pt

Summing up, we have shown that if $\|(f^n_{\mu})^*\| = 1$ for every $n \geq 1$ then $\mu$ must be elementary, thus completing the proof.
\end{proof}

Once we know that, up to taking iterates, $f_\mu^* : L^2_{(1,0)} \to L^2_{(1,0)} $ has norm less than one, we will have that $f_\mu^* : W^{1,2} \to W^{1,2}$ has a spectral gap. It is then well known how to use this to produce a stationary measure. This is the content of the next result.

\begin{theorem} \label{thm:exponential-convergence}
Let $\mu$ be a non-elementary probability measure on $G=\PSL_2(\C)$. Assume that $\mu$ has a finite first moment, i.e.\  $\int_G \log \|g\| \diff \mu (g) < + \infty$. Then $\mu$ admits a stationary measure $\nu$ that can be extended to a continuous linear functional on $W^{1,2}$ with the following properties 
\begin{enumerate}
\item There are constants $A >0$ and $0<\lambda<1$ such that 
\begin{equation*}
\Big \|(f_\mu^n)^*h- \langle \nu,h \rangle \Big \|_{W^{1,2}} \leq A \|\del h\|_{L^2} \lambda^n \quad \text{for every }  n \geq 0 \quad \text{and every } h \in W^{1,2}.
\end{equation*}
\item $|\langle \nu,h \rangle| \leq A'\|h\|_{W^{1,2}}$ for some constant $A'>0$ independent of $h$.
\end{enumerate}
In particular, $\nu$ has no mass on polar subsets of $\P^1$.
\end{theorem}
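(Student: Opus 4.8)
The plan is to promote the contraction of Proposition~\ref{prop:normlessthan1} to a genuine spectral gap on $W^{1,2}$, and then to produce the stationary measure by a dual argument on $W^{-1,2}:=(W^{1,2})^*$. Fix $N\geq 1$ and $\kappa<1$ with $\|(f_\mu^N)^*\|_{L^2_{(1,0)}\to L^2_{(1,0)}}\leq\kappa$. Two elementary facts: the transfer operator commutes with $\del$ (because $g^*$ does, for holomorphic $g$), and transfer operators compose, so $(f_\mu^{mN})^*=((f_\mu^N)^*)^m$ on $(1,0)$-forms. Hence, writing $n=jN+r$ with $0\leq r<N$ and using Proposition~\ref{prop:CS-ineq} for the remaining factor $(f_\mu^r)^*$,
$$\|\del (f_\mu^n)^* h\|_{L^2}=\|(f_\mu^n)^*\del h\|_{L^2}\leq\kappa^{\lfloor n/N\rfloor}\,\|\del h\|_{L^2}\qquad (h\in W^{1,2}).$$
Combined with Proposition~\ref{prop:f^*-bounded}, this controls the ``gradient part'' of every iterate; what remains is the ``average part'' $\int_{\P^1}(f_\mu^n)^* h\,\omegaFS$ and the identification of its limit as $\langle\nu,h\rangle$.

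For this I would look at the probability measures $\eta_n:=\mu^{*n}*\omegaFS=(f_\mu^n)_*\omegaFS$. First one checks that $\eta_1=\mu*\omegaFS\in W^{-1,2}$: pairing with $h$, $\|h\|_{W^{1,2}}\leq1$, and using $\|g^*h\|_{L^2}\leq A_2+A_3\log\|g\|$ from \eqref{eq:g*phi-L2norm} together with the finite first moment bounds $|\langle\eta_1,h\rangle|$ uniformly---this is exactly where the moment hypothesis enters. So $\sigma_0:=\eta_1-\eta_0\in W^{-1,2}$, and it annihilates the constants. Now identify the annihilator $C^\perp$ of the constants inside $W^{-1,2}$ with $(W^{1,2}/\R\mathbf{1})^*$; since the quotient norm on $W^{1,2}/\R\mathbf{1}$ is $h\mapsto\|\del h\|_{L^2}$, the displayed estimate with $n=N$ says $(f_\mu^N)^*$ contracts it by $\kappa$, hence its adjoint $(f_\mu^N)_*$ is a $\kappa$-contraction of $C^\perp$. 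Since $\eta_{n+1}-\eta_n=(f_\mu^n)_*\sigma_0$ with $\sigma_0\in C^\perp$, this yields $\|\eta_{n+1}-\eta_n\|_{W^{-1,2}}\lesssim\kappa^{\lfloor n/N\rfloor}$, so $(\eta_n)$ is Cauchy in $W^{-1,2}$; call the limit $\nu$. Then $(f_\mu)_*\nu=\nu$, and comparing the $W^{-1,2}$-limit with any weak-$*$ limit point of $(\eta_n)$ (which exists by compactness of $\P^1$) shows $\nu$ is a genuine probability measure, hence a stationary measure (the unique one, by Furstenberg), and $h\mapsto\langle\nu,h\rangle$ is bounded on $W^{1,2}$ with $|\langle\nu,h\rangle|\leq\|\nu\|_{W^{-1,2}}\|h\|_{W^{1,2}}$. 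This is assertion~(2).

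Assertion~(1) follows by combining the two pieces. Since $\langle\nu,h\rangle=\lim_n\langle\eta_n,h\rangle=\lim_n\int_{\P^1}(f_\mu^n)^* h\,\omegaFS$, we have
$$\|(f_\mu^n)^* h-\langle\nu,h\rangle\|_{W^{1,2}}=|\langle\eta_n-\nu,h\rangle|+\|\del (f_\mu^n)^* h\|_{L^2},$$
and the second term is $\leq\kappa^{\lfloor n/N\rfloor}\|\del h\|_{L^2}$. For the first term, $\eta_n-\nu$ kills constants, so $\langle\eta_n-\nu,h\rangle=\langle\eta_n-\nu,h-\bar h\rangle$ where $\bar h=\int_{\P^1} h\,\omegaFS$ and $\|h-\bar h\|_{W^{1,2}}=\|\del h\|_{L^2}$; with $\|\eta_n-\nu\|_{W^{-1,2}}\leq\sum_{j\geq n}\|\eta_{j+1}-\eta_j\|_{W^{-1,2}}\lesssim\kappa^{\lfloor n/N\rfloor}$ this is $\lesssim\kappa^{\lfloor n/N\rfloor}\|\del h\|_{L^2}$. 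Taking $\lambda:=\kappa^{1/N}<1$ gives~(1).

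Finally, $\nu$ charges no polar set. By inner regularity it suffices to take $E$ compact and polar; choose an $\omegaFS$-subharmonic $u\leq 0$ on $\P^1$ with $\sup_{\P^1}u=0$ and $u|_E\equiv-\infty$, and set $u_k:=\max(u,-k)$. An integration by parts, using that $\ddc u_k+\omegaFS$ is a probability measure and $-k\leq u_k\leq 0$, gives $\|\del u_k\|_{L^2}^2\lesssim k$; since also $|\int_{\P^1} u_k\,\omegaFS|\leq\int_{\P^1}|u|\,\omegaFS<\infty$, we get $\|u_k\|_{W^{1,2}}=O(\sqrt k)$. On the other hand $u_k$ is bounded and upper semicontinuous, so by $W^{-1,2}$-convergence of $\eta_n$ to $\nu$ and the portmanteau lemma, $\langle\nu,u_k\rangle=\lim_n\int_{\P^1} u_k\,d\eta_n\leq\int_{\P^1} u_k\,d\nu\leq -k\,\nu(E)$ (because $u_k=-k$ on $E$ and $u_k\leq 0$ elsewhere). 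Combined with $\langle\nu,u_k\rangle\geq -A'\|u_k\|_{W^{1,2}}=O(\sqrt k)$, this forces $\nu(E)=O(k^{-1/2})\to 0$, hence $\nu(E)=0$. The step I expect to be most delicate is the second paragraph---checking $\mu*\omegaFS\in W^{-1,2}$ and running the $\kappa$-contraction of $(f_\mu^N)_*$ on the mass-zero part of $W^{-1,2}$ cleanly (including the bookkeeping between iterates divisible by $N$ and general $n$); in the last paragraph one must also make sure the duality pairing $\langle\nu,u_k\rangle$ is controlled by $\int u_k\,d\nu$, which is precisely what the portmanteau step gives.
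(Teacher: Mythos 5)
Your argument is correct, and it reaches the theorem by a genuinely different (dual) route. The paper works primally: it writes $(f_\mu^n)^*h = h_n + c_n + \cdots + c_0$ where each $h_k$ has $\omegaFS$-mean zero, uses the Poincar\'e--Sobolev inequality to control $\|h_n\|_{L^2}$ by $\|\del h_n\|_{L^2} = \|(f_\mu^n)^*\del h\|_{L^2} \le \lambda^n\|\del h\|_{L^2}$, and defines $\langle\nu,h\rangle$ as the convergent series $\sum_k c_k$, which is then shown to be a positive normalized functional, hence a probability measure by Riesz. You instead transfer the same gap (Propositions \ref{prop:normlessthan1}, \ref{prop:CS-ineq}, \ref{prop:f^*-bounded}, \ref{prop:equinorm}) to the dual: the annihilator of the constants in $(W^{1,2})^*$ is isometric to the dual of $W^{1,2}/\R\mathbf 1$, whose quotient norm is exactly $\|\del h\|_{L^2}$, so $(f_\mu^N)_*$ is a $\kappa$-contraction there; hence $\eta_n=(f_\mu^n)_*\omegaFS$ is Cauchy in $(W^{1,2})^*$ and $\nu$ is obtained directly as its limit, identified with a probability measure via weak-$*$ compactness. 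This trades the Poincar\'e--Sobolev step for the quotient/annihilator duality and exhibits $\nu$ as an equidistribution limit of $(f_\mu^n)_*\omegaFS$, which is arguably more conceptual; the paper's series construction is more hands-on but delivers the same estimates with the same constants. Your polar-set argument also differs: the paper uses that $h=-\log(-u)$ belongs to $W^{1,2}$ and that $\langle\nu,h\rangle$ is finite, while you truncate, $u_k=\max(u,-k)$, use $\|u_k\|_{W^{1,2}}=O(\sqrt k)$ and a portmanteau inequality; both are valid. Two points you flag should indeed be spelled out but cause no trouble: the identity $\langle\eta_n,u_k\rangle=\int u_k\,\diff\eta_n$ for the upper semicontinuous Borel representative holds because $\eta_n$ is an average of the smooth forms $g_*\omegaFS$, hence absolutely continuous with respect to Lebesgue measure, with the Fubini exchange justified by \eqref{eq:g*phi-L2norm} and the finite first moment; and for $n=jN+r$ the remainder operators $(f_\mu^r)_*$, $0\le r<N$, are bounded on $(W^{1,2})^*$ since $\mu^{*r}$ has finite first moment by subadditivity of $\log\|g\|$, so Proposition \ref{prop:f^*-bounded} applies.
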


\begin{proof}
By Proposition \ref{prop:normlessthan1}  we may assume, after replacing $f_\mu$ by $f_\mu^N$ for some $N \geq 1$, that the norm of $f_\mu^*$ acting on $L^2_{(1,0)}$ is less than one. Let $0<\lambda < 1$ be its value.

For $h \in W^{1,2}$, let 
$$c_0:=\int_X h \omegaFS \qquad \text{and} \qquad h_0:=h-c_0$$ 
and define inductively  
$$c_n:=\int_X (f_\mu^*h_{n-1})\omegaFS \qquad \text{and} \qquad h_n:= f_\mu^*h_{n-1}-c_n.$$ 
Then
\begin{equation} \label{eq:push-forward-hn}
(f_\mu^n)^* h = h_n + c_n + c_{n-1}+ \cdots + c_1+ c_0.
\end{equation}

		By Proposition \ref{prop:f^*-bounded}, we have $h_n\in W^{1,2}$ for all $n$. Notice that $\lp \omegaFS, h_n \rp = 0$, so by  Poincar\' e-Sobolev inequality we have $\|h_n\|_{L^2} \leq A_1 \|\partial h_n\|_{L^2}$ for some constant $A_1>0$. We also have $\del h_n = f_\mu^* (\del h_{n-1})$ for every $n$. Then  $$\|h_n\|_{L^2}\leq A_1 \left \|\partial h_n \right\|_{L^2} = A_1 \|(f_\mu^n)^*(\partial h) \|_{L^2} \leq A_1\lambda^n\|\partial h\|_{L^2}.$$
	
	By Propositions \ref{prop:equinorm}  and \ref{prop:f^*-bounded}, there is a constant $A_2>0$ such that $\|f^* \varphi \|_{L^2}\leq A_2\|\varphi\|_{W^{1,2}}$ for every $\varphi \in W^{1,2}$. Hence, we have
	\begin{align} \label{eq:estimate-cn}
		 		|c_n|& =\Big |\int_X  (f_\mu^*h_{n-1})\omegaFS \Big  | \leq   \| f_\mu^*h_{n-1}  \|_{L^2} \leq   A_2\|h_{n-1}\|_{W^{1,2}} \nonumber \\
		& = A_2\|\partial h_{n-1}\|_{L^2} = A_2  \|(f_\mu^{n-1})^*(\partial h)  \|_{L^2} 
		\leq A_2 \lambda^{n-1}  \|\partial h\|_{L^2}. \nonumber
	\end{align}

Set  $c_h:= \sum^{\infty}_{k=0}c_k$ and define the linear functional $\nu$ by $$\langle \nu, h \rangle := c_h \quad \text{for } h \in W^{1,2}.$$ Clearly, this constant is finite and satisfies the estimate stated in (2) for a suitable constant $A'>0$. We also have $\langle \nu, \mathbf 1 \rangle = 1$ according to (\ref{eq:push-forward-hn}) and if $h$ is smooth and non-negative we have $\langle \nu, h \rangle = \lim_{n \to \infty} \int_{\P^1} (f^n_\mu)^*h\, \omegaFS \geq 0$. So, by Riesz Representation Theorem, $\nu$ defines a probability measure on $\P^1$. 

We have from (\ref{eq:push-forward-hn}) that
		\begin{align*}
		  \|(f_\mu^n)^*h - \langle \nu, h \rangle  \|_{L^2}&=   \|(f_\mu^n)^*h-c_h  \|_{L^2} = \Big  \|h_n-\sum_{k=n+1}^\infty c_k \Big  \|_{L^2} \leq \|h_n\|_{L^2}+\sum_{k=n+1}^\infty |c_k| \\
		&\leq A_1 \lambda^n \|\partial h\|_{L^2}+\sum_{k=n+1}^\infty A_2 \lambda^{k-1} \|\partial h\|_{L^2} 
		\leq A_3 \| \del h\|_{L^2} \lambda^n
		\end{align*}
	for some constant $A_3 > 0$.  On the other hand, by Proposition \ref{prop:equinorm} and the definition of $\lambda$, we obtain
\begin{align*} 
\|(f_\mu^n)^*h-c_h \|_{W^{1,2}}  & \lesssim \|(f_\mu^n)^*h-c_h \|_{L^2} + \|(f_\mu^n)^* (\del h) \|_{L^2} \\
& \leq  \|(f_\mu^n)^*h-c_h \|_{L^2} +  \lambda^n \|\partial h\|_{L^2} \\
& \leq A_4 \lambda^n \|\partial h\|_{L^2}
\end{align*}
for some constant $A_4>0$. 
Thus, we get (1)  for a suitable constant $A>0$.

In order to show that $\nu$  is stationary we notice that, from (1), we have $(f_\mu^n)^*h \to \langle \nu, h \rangle$ in $W^{1,2}$ for every $h \in W^{1,2}$. In particular, if $\varphi$ is a smooth test function, then$$ \langle \nu, \varphi \rangle =  \lim_{n \to \infty}(f_\mu^{n+1})^* \varphi = \lim_{n \to \infty} (f_\mu^n)^* f_\mu^* \varphi = \langle \nu, f_\mu^* \varphi \rangle = \langle (f_\mu)_* \nu,  \varphi\rangle,$$ showing that $(f_\mu)_*\nu = \nu$, that is, $\nu$ is stationary.

We now prove the last statement. If $E \subset \P^1$ is a polar set then, by definition, there is a quasi-subharmonic function $u$ on $\P^1$ such that $E\subseteq \{u=-\infty\}$. We may assume that $u \leq -1$ and $u$ is the limit of a decreasing sequence of smooth negative functions $u_n$ with $\ddc u_n\geq -\omegaFS$. Then $h := - \log (-u)$ belongs to $W^{1,2}$ and is the decreasing limit of the sequence $h_n := - \log (-u_n)$ which is bounded in $W^{1,2}$, see \cite{dinh-sibony:decay-correlations} and  \cite[Ex.1]{vigny:dirichlet}. The function $h$ is defined everywhere and is bounded from above, so $\lp \nu,h \rp$ coincides with the integral of $h$ with respect to $\mu$. The fact $h=-\infty$ on $E$ and that $\lp \nu,h \rp$ is finite imply that $\nu(E)=0$. The proof is now complete.
\end{proof}

\begin{remark} \label{remark:unique-measure}
As mentioned in the Introduction, it is well known since Furstenberg that a non-elementary measure admits a unique stationary measure. Hence, the measure $\nu$ in the above theorem is necessarily the unique $\mu$-stationary measure and our result says that the iterates of the transfer operator $f_\mu^*$ converge exponentially fast with respect to the Sobolev norm to the operator $\varphi \mapsto \lp \nu, \varphi \rp \mathbf 1$. The uniqueness of the stationary measure also follows from Theorem \ref{thm:equidistribution-images}.
\end{remark}

The proof of Theorem \ref{thm:exponential-convergence-main} follows immediately  from Theorem \ref{thm:exponential-convergence} and Remark \ref{remark:unique-measure}. The following consequence of Theorem \ref{thm:exponential-convergence} will be used later.

\begin{corollary} \label{cor:nu-norm}
Let $\mu$ and $\nu$ be as in Theorem \ref{thm:exponential-convergence}. Then
$\|\varphi\|_\nu:=|\langle \nu,\varphi\rangle|+ \|\partial\varphi\|_{L^2}$
defines a norm on $W^{1,2}$ which is equivalent to $\|\cdot \|_{W^{1,2}}$.
\end{corollary}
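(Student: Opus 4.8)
The plan is to show the two norms $\|\cdot\|_\nu$ and $\|\cdot\|_{W^{1,2}}$ are comparable in both directions. One direction is immediate from property (2) of Theorem \ref{thm:exponential-convergence}: since $|\langle \nu, \varphi\rangle| \leq A'\|\varphi\|_{W^{1,2}}$ and trivially $\|\partial\varphi\|_{L^2} \leq \|\varphi\|_{W^{1,2}}$, we get $\|\varphi\|_\nu \leq (A'+1)\|\varphi\|_{W^{1,2}}$, so $\|\cdot\|_\nu$ is a well-defined finite quantity bounded above by a multiple of the Sobolev norm. It is also clear that $\|\cdot\|_\nu$ satisfies the triangle inequality and is homogeneous, so the only nonobvious point in checking it is a genuine norm is that $\|\varphi\|_\nu = 0$ forces $\varphi = 0$: if $\|\partial\varphi\|_{L^2}=0$ then $\varphi$ is constant (as $\P^1$ is connected), say $\varphi \equiv c$, and then $|\langle\nu,\varphi\rangle| = |c|\cdot\langle\nu,\mathbf 1\rangle = |c| = 0$, so $c=0$.

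For the reverse inequality I would invoke Proposition \ref{prop:equinorm}, which asserts that $\|\cdot\|_{W^{1,2}}$ is equivalent to $\|h\|_1 = \|h\|_{L^1} + \|\partial h\|_{L^2}$, and more conveniently to the equivalent norm $\|h\|_2 = \|h\|_{L^2} + \|\partial h\|_{L^2}$. So it suffices to bound $\|\varphi\|_{L^2}$ by a constant times $\|\varphi\|_\nu$. Write $\varphi = (\varphi - \langle\nu,\varphi\rangle) + \langle\nu,\varphi\rangle$. The constant part contributes $|\langle\nu,\varphi\rangle| \cdot \|\mathbf 1\|_{L^2} = |\langle\nu,\varphi\rangle|$, which is controlled by $\|\varphi\|_\nu$. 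For the remaining part $\psi := \varphi - \langle\nu,\varphi\rangle$, note that $\partial\psi = \partial\varphi$ and, crucially, $\langle\nu,\psi\rangle = 0$; moreover $\int_{\P^1}\psi\,\omegaFS$ need not vanish, so I cannot directly use the Poincaré–Sobolev inequality in the form used in the proof of Theorem \ref{thm:exponential-convergence}. Instead I apply property (1) of that theorem with $n=0$ to $h = \psi$: it gives $\|\psi - \langle\nu,\psi\rangle\|_{W^{1,2}} \leq A\|\partial\psi\|_{L^2}$, i.e.\ $\|\psi\|_{W^{1,2}} \leq A\|\partial\varphi\|_{L^2}$ since $\langle\nu,\psi\rangle=0$. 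Hence $\|\psi\|_{L^2} \leq \|\psi\|_{W^{1,2}} \leq A\|\partial\varphi\|_{L^2}$, and combining the two pieces yields $\|\varphi\|_{L^2} \leq |\langle\nu,\varphi\rangle| + A\|\partial\varphi\|_{L^2} \leq \max(1,A)\,\|\varphi\|_\nu$. Together with $\|\partial\varphi\|_{L^2} \leq \|\varphi\|_\nu$ this bounds $\|\varphi\|_2$ by a multiple of $\|\varphi\|_\nu$, and Proposition \ref{prop:equinorm} converts this into $\|\varphi\|_{W^{1,2}} \lesssim \|\varphi\|_\nu$, completing the equivalence.

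The step I expect to require the most care is the reverse bound $\|\varphi\|_{L^2} \lesssim \|\varphi\|_\nu$: the naive approach via Poincaré–Sobolev fails because the natural normalization killed by $\nu$ is not the $\omegaFS$-average, so one genuinely needs the $n=0$ case of the exponential estimate (1) — or, equivalently, an a priori bound of the form $\big|\int \varphi\,\omegaFS - \langle\nu,\varphi\rangle\big| \lesssim \|\partial\varphi\|_{L^2}$ obtained from (1) and (2) — to pass between the two linear functionals. Everything else is a routine assembly of Proposition \ref{prop:equinorm} and the two properties of Theorem \ref{thm:exponential-convergence}.
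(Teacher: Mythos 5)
Your proof is correct, but it takes a slightly different route from the paper's. You center $\varphi$ at $\langle\nu,\varphi\rangle$ and control $\|\varphi-\langle\nu,\varphi\rangle\|_{L^2}$ via the $n=0$ case of estimate (1) of Theorem \ref{thm:exponential-convergence}, then pass back to $\|\cdot\|_{W^{1,2}}$ through the equivalent norm $\|h\|_2=\|h\|_{L^2}+\|\partial h\|_{L^2}$ of Proposition \ref{prop:equinorm}. The paper instead centers at the Fubini--Study average $m(\varphi)=\int\varphi\,\omegaFS$ and never touches the $L^2$ norm: since $\int(\varphi-m(\varphi))\,\omegaFS=0$, one has $\|\varphi-m(\varphi)\|_{W^{1,2}}=\|\partial\varphi\|_{L^2}$ exactly, so property (2) gives $|\langle\nu,\varphi-m(\varphi)\rangle|\lesssim\|\partial\varphi\|_{L^2}$, and the triangle inequality $|m(\varphi)|=|\langle\nu,m(\varphi)\rangle|\leq|\langle\nu,\varphi-m(\varphi)\rangle|+|\langle\nu,\varphi\rangle|$ yields $\|\varphi\|_{W^{1,2}}\lesssim\|\varphi\|_\nu$ directly. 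Both arguments rest on the same point --- comparing the two linear functionals $\omegaFS$ and $\nu$ through the conclusions of the spectral-gap theorem --- but the paper's version is a bit more economical, using only property (2) and avoiding both the $n=0$ case of (1) and the $L^2$-norm equivalence. One tiny slip in your write-up: with the paper's definition $\|h\|_{W^{1,2}}=|\int h\,\omegaFS|+\|\partial h\|_{L^2}$, the inequality $\|\psi\|_{L^2}\leq\|\psi\|_{W^{1,2}}$ is not literally true; it holds only up to a constant via Proposition \ref{prop:equinorm}, which is harmless since you only need bounds up to constants.
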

\begin{proof}
Clearly $\|\cdot \|_\nu \lesssim \| \cdot \|_{W^{1,2}}$ by Theorem \ref{thm:exponential-convergence}. We now prove the reverse inequality. Let $\varphi \in W^{1,2}$ and define $m(\varphi): = \int \varphi \, \omegaFS$. Then $\|\varphi\|_{W^{1,2}} = |m(\varphi)| + \|\del \varphi\|_{L^2}$. By Theorem \ref{thm:exponential-convergence}, we have $$ |\lp \nu, \varphi - m(\varphi) \rp | \lesssim \|\varphi - m(\varphi)\|_{W^{1,2}} = \|\del \varphi\|_{L^2}.$$
Hence $$|m(\varphi)| = |\lp \nu, m(\varphi) \rp|\leq |\lp \nu, \varphi - m(\varphi) \rp | + | \lp \nu, \varphi \rp| \lesssim \|\varphi\|_\nu.$$
This gives  $\| \varphi \|_{W^{1,2}} \lesssim \|\varphi\|_\nu$ and completes the proof.
\end{proof}

\section{Equidistribution of points} \label{sec:equidistribution}

This section is devoted to the proof of Theorem \ref{thm:equidistribution-images}.

We will need the following consequence of Proposition \ref{prop:exponential-estimate}. A proof can be found in \cite{DKW:correspondences}. In what follows, we say that a real valued function $u$ on $\P^1$ is \textit{$(M,\gamma)$ - H\"older continuous} if $|u(x) - u(y)|  \leq M \dist(x,y)^\gamma$ for every $x,y \in \P^1$. When $\gamma=1$ we say that $u$ is\textit{ $M$-Lipschitz}.

\begin{lemma} \label{lemma:holder-exponential}
Let $\mathcal F$ be a bounded subset of $W^{1,2}$. There is a constant $A = A(\mathcal F)> 0$ (independent of $M$ and $\gamma$) such that if $\varphi \in \mathcal F$ is $(M,\gamma)$-H\"older continuous for some constants $M \geq 1$ and $0 < \gamma \leq 1$, then $$\|\varphi\|_\infty \leq A \gamma^{-1}(1+\log M).$$
\end{lemma}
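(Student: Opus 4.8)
The plan is to exploit the Moser--Trudinger estimate from Proposition \ref{prop:exponential-estimate}, which gives constants $A_0, \alpha > 0$, depending only on $\mathcal F$, with $\int_{\P^1} e^{\alpha \varphi^2} \omegaFS \leq A_0$ for all $\varphi \in \mathcal F$. The idea is that $L^\infty$ control of $\varphi$ can be recovered from the fact that $\varphi$ cannot be large on a set that is too small, combined with the fact that H\"older continuity forces $\varphi$ to stay large on a definite-size ball around any near-maximizer. First I would fix $\varphi \in \mathcal F$ that is $(M,\gamma)$-H\"older continuous, let $s := \|\varphi\|_\infty$ (finite since H\"older continuous functions on the compact $\P^1$ are bounded), and pick a point $x_0$ with $|\varphi(x_0)|$ close to $s$, say $|\varphi(x_0)| \geq s/2$ (if $s$ is already bounded by an absolute constant there is nothing to prove, so we may assume $s$ large). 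On the disc $\DD(x_0, r)$ with $r := (s/(4M))^{1/\gamma}$ we have $|\varphi(x) - \varphi(x_0)| \leq M r^\gamma = s/4$, hence $|\varphi(x)| \geq s/4$ throughout $\DD(x_0,r)$.

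Next I would feed this into the exponential integral bound:
\begin{equation*}
A_0 \geq \int_{\P^1} e^{\alpha \varphi^2} \omegaFS \geq \int_{\DD(x_0,r)} e^{\alpha \varphi^2} \omegaFS \geq e^{\alpha s^2/16}\, \omegaFS(\DD(x_0,r)).
\end{equation*}
Since $\omegaFS(\DD(x_0,r)) \geq c_0 r^2$ for some absolute constant $c_0 > 0$ and all $r \le 1$ (and we may assume $r\le 1$, enlarging $M$ if needed so that $r=(s/4M)^{1/\gamma}\le 1$; note $M\ge 1$), taking logarithms yields
\begin{equation*}
\frac{\alpha s^2}{16} \leq \log A_0 - \log c_0 - 2 \log r = \log A_0 - \log c_0 + \frac{2}{\gamma}\big(\log(4M) - \log s\big).
\end{equation*}
Dropping the $-\tfrac{2}{\gamma}\log s$ term (which is favorable once $s \ge 1$) gives $\alpha s^2 \lesssim 1 + \gamma^{-1}(1 + \log M)$, and since $\gamma \le 1$ and $M \ge 1$ the right-hand side is $\lesssim \gamma^{-1}(1+\log M)$; hence $s \lesssim \gamma^{-1/2}(1+\log M)^{1/2} \lesssim \gamma^{-1}(1 + \log M)$, using $\gamma^{-1/2} \le \gamma^{-1}$ and $(1+\log M)^{1/2} \le 1 + \log M$. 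This is the claimed bound, with $A$ depending only on $\alpha$, $A_0$, $c_0$, i.e.\ only on $\mathcal F$.

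The only genuinely delicate point is the bookkeeping at the boundary of the regime: one must handle the case where $r := (s/4M)^{1/\gamma}$ would exceed $1$, and the case where $s$ is not large (say $s \le e$), separately, since in both the stated inequality is immediate (for small $s$) or can be arranged by shrinking $r$ to $1$ and noting $\omegaFS(\DD(x_0,1))$ is bounded below. I would also double-check the elementary fact $\omegaFS(\DD(a,r)) \gtrsim r^2$ uniformly in $a$, which follows from the rotational symmetry of $\omegaFS$ under $\PSU(2)$ reducing to an explicit one-variable computation. Everything else is routine; the essential input is entirely the Moser--Trudinger estimate, and the H\"older hypothesis is used only to convert a pointwise near-maximum into a lower bound on a ball of controlled radius.
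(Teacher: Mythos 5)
Your proof is correct and follows essentially the route the paper intends: the paper states the lemma precisely as a consequence of the Moser--Trudinger estimate (Proposition \ref{prop:exponential-estimate}) and defers the details to \cite{DKW:correspondences}, where the argument is the same localization idea — H\"older continuity keeps $|\varphi|$ comparable to its sup on a disc of radius $(s/4M)^{1/\gamma}$, and the exponential integrability then forces the logarithmic bound. Your handling of the edge cases ($s$ small, and $r>1$ replaced by $r=1$) is the right bookkeeping, and in fact your computation yields the slightly stronger bound $\gamma^{-1/2}(1+\log M)^{1/2}$.
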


\begin{proof}[Proof of Theorem \ref{thm:equidistribution-images}]
By the Theory of Interpolation between Banach spaces it is enough to prove the result for $\beta = 1$, see \cite{triebel}. We can normalize $\varphi$ so that $\|\varphi\|_{\mathcal C^1} \leq 1$ and $\lp \nu, \varphi \rp = 0$.

Let $\varphi_n := (f^n_\mu)^* \varphi$. Since $$\lp  (f_\mu^n)_*\delta_a , \varphi \rp = \lp \delta_a , (f^n_\mu)^* \varphi \rp = \varphi_n(a)$$
 we need to show that $\|\varphi_n\|_\infty \leq A \gamma^n$ for some constants $A>0$ and $0<\gamma <1$.

\vskip5pt
Let $\lambda_0$ be the norm of $f_\mu^*$ acting on $L^2_{(1,0)}$. By Proposition \ref{prop:normlessthan1}, after replacing $\mu$ by $\mu^{*N}$ for some $N  \geq 1$ if necessary, we may assume that that $0< \lambda_0 <1$. Let $C_n:= e^{\delta_0^n}$ where $\delta_0 > 1$ is a constant such that $1 < \delta_0^{1+\epsilon} < \frac{1}{\lambda_0}$. Set $$\mathcal A^{(n)} := \{(g_1,\ldots,g_n) \in G^n : \|g_n \cdots g_1 \| \leq C_n\}$$ and $$ \mathcal B^{(n)} := \{(g_1,\ldots,g_n) \in G^n : \|g_n \cdots g_1 \| > C_n\}. $$

We can then write $\varphi_n = \varphi^{(1)}_n + \varphi^{(2)}_n$, where $$ \varphi^{(1)}_n (x) := \int_{\mathcal A^{(n)}} \varphi (g_n \cdots g_1 \cdot x) \, \diff \mu^n(g_1,\ldots,g_n)$$ and $$ \varphi^{(2)}_n (x) := \int_{\mathcal B^{(n)}} \varphi (g_n \cdots g_1 \cdot x) \, \diff \mu^n(g_1,\ldots,g_n).$$

We will show separately that $\| \varphi^{(1)}_n\|_\infty$ and $\| \varphi^{(2)}_n\|_\infty$ are bounded by $A \gamma^n$ for some constants $A>0$ and $0<\gamma <1$. 

We start by estimating $ \varphi^{(2)}_n$. Let $M_n = \int (\log \|g_n \cdots g_1\|)^{1+\epsilon} \, \diff \mu^n(g_1,\ldots,g_n)$ be the $(1+\epsilon)$-moment of $\mu^{*n}$. By assumption $M_1$ is finite. We also have that $M_n \leq n^{1+\epsilon} M_1$ by the sub-additivity of $\log\|g\|$. This implies that
\begin{equation} \label{eq:small-probability}
\mu^{\otimes n}(\mathcal B^{(n)})\leq \frac{M_1 \cdot n^{1+\epsilon}}{(\log C_n)^{1+\epsilon}}  = \frac{M_1 \cdot n^{1+\epsilon}}{\delta_0^{n(1+\epsilon)}} \cdot
\end{equation}

Since $\|\varphi\|_\infty \leq 1$, the definition of $\varphi_n^{(2)}$ implies that   $\|\varphi_n^{(2)}\|_\infty \leq M_1 n^{1+\epsilon} \delta_0^{-n(1+\epsilon)}$, which is bounded by $A_2 \gamma_2^n$ for some constants $A_2 > 0$ and $0< \gamma_2 < 1$.
\vskip5pt
In order to estimate $ \varphi^{(1)}_n$ choose a constant $\delta$ such that $\delta_0 < \delta < \delta_0^{1+\epsilon}$ and set $\widehat \varphi_n = \delta^{n} \varphi_n$ and $\widehat \varphi^{(j)}_n = \delta^{n} \varphi^{(j)}_n$, $j=1,2$. We have $\widehat \varphi_n = \widehat \varphi^{(1)}_n +\widehat \varphi^{(2)}_n$.



\vskip10pt
\noindent \textbf{Claim:} $\widehat \varphi_n$, $\widehat \varphi_n^{(1)}$ and $\widehat \varphi_n^{(2)}$ belong to a bounded family in $W^{1,2}$.

\begin{proof}
We will prove that $\widehat \varphi_n$ and $\widehat \varphi_n^{(2)}$ belong to a bounded family. Then the result for $\widehat \varphi_n^{(1)}$ will follow because $\widehat \varphi_n^{(1)} = \widehat \varphi_n - \widehat \varphi_n^{(2)}$.

By the invariance of $\nu$ we have that $\lp \nu, \widehat \varphi_n  \rp = \delta^n \lp \nu, \varphi_n \rp  = \delta^n \lp \nu, \varphi \rp = 0$. We also have that $$\|\del  \widehat \varphi_n \|_{L^2} = \delta^n \| \del \varphi_n \|_{L^2} = \delta^n \| (f^n_\mu)^* \del \varphi \|_{L^2} \lesssim \delta^n \lambda_0^n \|\del \varphi\|_{L^2} \leq (\delta_0^{1+\epsilon})^n \lambda_0^n \|\del \varphi\|_{L^2}$$
is bounded uniformly in $n$ since $\delta < \delta_0^{1+\epsilon} < \frac{1}{\lambda_0}$. Therefore $\widehat \varphi_n$ is a bounded family in $W^{1,2}$ for $n\geq 1$.

We now prove that $\widehat \varphi_n^{(2)}$  belong to a bounded family. Using (\ref{eq:small-probability}) and the definition of $\varphi_n^{(2)}$ we have that $$|\lp  \omegaFS, \varphi_n^{(2)} \rp| \leq \|\varphi_n^{(2)}\|_\infty \leq M_1 n^{1+\epsilon} \delta_0^{-n(1+\epsilon)}$$ and, by Cauchy-Schwarz inequality and Proposition \ref{prop:CS-ineq} $$\|\del \varphi^{(2)}_n\|_{L^2} \leq \|\del \varphi\|_{L^2} M_1 n^{1+\epsilon} \delta_0^{-n(1+\epsilon)} \leq  M_1 n^{1+\epsilon} \delta_0^{-n(1+\epsilon)}.$$

Hence $ \| \widehat \varphi_n^{(2)}\|_{W^{1,2}} \lesssim M_1 n^{1+\epsilon} \delta^n \delta_0^{-n(1+\epsilon)}.$ Since $1<\delta<\delta_0^{1+\epsilon}$, the last quantity is bounded uniformly in $n$. This proves the claim.
\end{proof}

We can now finish the proof of the theorem. Notice that  $ \varphi^{(1)}_n$ is $A_0 C_n^2$-Lipschitz for some universal constant $A_0 >0$. This is not difficult to check using Cartan's decomposition as in Lemma \ref{lemma:jacobian-estimate}. Therefore $ \widehat \varphi^{(1)}_n$ is $A_0 \delta^n C_n^2$-Lipschitz. By Lemma \ref{lemma:holder-exponential} and the above claim we get 
$$\|\widehat \varphi_n^{(1)}\|_{\infty} \leq B \big(1 + \log (A_0 \delta^n C_n^2)\big) = B' \big(1 + n\log \delta  + 2 \delta_0^n \big)$$
for some constants $B,B' > 0$, giving
$$\|\varphi^{(1)}_n\|_{\infty} \leq B' \delta^{-n} \big(1 + n\log (\lambda_0^{-1})  + 2 \delta_0^n \big).$$

Since $1<\delta_0 < \delta$ we get $\|\varphi^{(1)}_n\|_{\infty} \leq A_1 \gamma_1^n$ for some constants $A_1 > 0$ and $0< \gamma_1 < 1$.

\vskip5pt

Taking $A = 2 \max \{A_1,A_2\}$ and  $\gamma = \max \{\gamma_1,\gamma_2\}$ gives $\|\varphi_n\|_{\infty} \leq A \gamma^n$, finishing the proof.
\end{proof} 

\section{Central Limit Theorem} \label{sec:CLT}

This section is devoted to the proof of Theorem \ref{thm:CLT}. We begin by recalling some basic notions, see \cite{bougerol-lacroix} for more details. 

Let $\mu$ be a probability measure on $G= \PSL_2(\C)$ satisfying the first moment condition $\int \log \|g\| \, \diff \mu(g) < + \infty$. Then, the (upper) \textbf{Lyapunov exponent} of $\mu$ is defined as 
\begin{equation}
\gamma:= \lim_{n \to \infty} \frac1n \E(\log\|g_n \cdots g_1\|) = \lim_{n \to \infty} \frac1n \int \log\|g_n \cdots g_1\| \, \diff \mu(g_1) \cdots \diff \mu(g_n).
\end{equation}

It follows from Kingman's subadditive ergodic theorem that 
\begin{equation*}
\gamma= \lim_{n \to \infty} \frac1n \log\|g_n \cdots g_1\| \quad \text{almost surely}
\end{equation*}
and
\begin{equation} \label{eq:lyap-integral-formula}
\gamma = \int_{\P^1} \int_G \log \frac{\|g \cdot v\|}{\|v\|} \diff \mu (g) \diff \nu(x), \quad x=[v].
\end{equation}

Here and in what follows $v$ will denote a non-zero vector in $\C^2$ and $x=[v]$ will be the corresponding point in $\P^1$. We will call $v$ a \textit{lift} of $x$. Notice that the quantity  $\frac{\|g \cdot v\|}{\|v\|}$ is independent of the choice of lift.

For the proof of Theorem \ref{thm:CLT}, we will apply the method of Gordin-Liverani. Recall their theorem.

\begin{theorem}[Gordin-Liverani, \cite{gordin,liverani}]
Let $(X,\mathfrak m)$ be a probability space and let $F:X \to X$. Assume that $\mathfrak m$ is $F$-invariant and ergodic. Let $F^*: \phi \mapsto \phi \circ F$ be the pullback operator acting on $L^2(\mathfrak m)$ and denote by $\Lambda: L^2(\mathfrak m) \to L^2(\mathfrak m)$ its adjoint.

Let $\widetilde \varphi \in L^2(\mathfrak m)$ be such that $\lp \mathfrak m, \widetilde \varphi \rp = 0$ and assume $\widetilde \varphi$ is not a coboundary, that is, not of the form $\widetilde\varphi = \psi \circ F  - \psi$ for some $\psi \in L^2(\mathfrak m)$. If
$$\sum_{n\geq 0} \|\Lambda^n \widetilde \varphi \|^2_{L^2(\mathfrak m)} < + \infty, $$
then the sequence of random variables $Z_n := \frac{1}{\sqrt n} \sum_{j=0}^{n-1} \widetilde \varphi \circ F^j$ converges in distribution to a Gaussian random variable of mean $0$ and variance $\sigma > 0$, where $$\sigma^2 = - \lp \mathfrak m, (\widetilde \varphi)^2\rp + 2 \sum_{n\geq 0} \lp \mathfrak m , \widetilde \varphi \cdot (\widetilde \varphi \circ F^n) \rp.$$
\end{theorem}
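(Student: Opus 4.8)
The plan is to prove this by the classical martingale‑approximation argument, which reduces the CLT for the sums of $\widetilde\varphi$ to the CLT for a stationary ergodic martingale difference sequence. Since $\mathfrak m$ is $F$-invariant, $F^*$ is an isometry of $L^2(\mathfrak m)$, so its adjoint $\Lambda$ is a contraction and satisfies $\Lambda F^* = \id$. Using the summability hypothesis on $\|\Lambda^n\widetilde\varphi\|_{L^2(\mathfrak m)}$ I would first show that $g := \sum_{n\ge0}\Lambda^n\widetilde\varphi$ is a well-defined element of $L^2(\mathfrak m)$, morally $g = (\id-\Lambda)^{-1}\widetilde\varphi$. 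Setting $\psi := \Lambda g$ and $D := g - F^*\psi$ one obtains the decomposition
$$\widetilde\varphi = (\id-\Lambda)g = D + \big(F^*\psi - \psi\big), \qquad \Lambda D = \Lambda g - (\Lambda F^*)\Lambda g = 0,$$
so that $D$ is orthogonal to the range of $F^*$, i.e.\ $\E[D \mid F^{-1}\mathcal B] = 0$, where $\mathcal B$ is the underlying $\sigma$-algebra of $X$.

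Next, writing $S_n := \sum_{j=0}^{n-1}\widetilde\varphi\circ F^j$, the decomposition telescopes to $S_n = M_n + (\psi\circ F^n - \psi)$ with $M_n := \sum_{j=0}^{n-1}D\circ F^j$. Since $\|\psi\circ F^n - \psi\|_{L^2(\mathfrak m)} \le 2\|\psi\|_{L^2(\mathfrak m)}$, the coboundary term satisfies $\tfrac{1}{\sqrt n}(\psi\circ F^n - \psi) \to 0$ in probability, hence $Z_n = \tfrac{1}{\sqrt n}S_n$ and $\tfrac{1}{\sqrt n}M_n$ have the same limit in law. Relative to the decreasing filtration $\mathcal G_j := F^{-j}\mathcal B$, each $D\circ F^j$ is $\mathcal G_j$-measurable and $\E[D\circ F^j \mid \mathcal G_{j+1}] = \big(\E[D \mid F^{-1}\mathcal B]\big)\circ F^j = 0$, so $(D\circ F^j)_{j\ge0}$ is a stationary ergodic reverse-martingale difference sequence. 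The martingale central limit theorem (Billingsley, Ibragimov) then gives $\tfrac{1}{\sqrt n}M_n \to \mathcal N(0,\sigma^2)$ with $\sigma^2 = \E[D^2]$; the hypothesis that $\widetilde\varphi$ is not a coboundary forces $D \ne 0$, hence $\sigma > 0$.

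It remains to identify the variance. Since $\{Z_n\}$ is bounded in $L^2(\mathfrak m)$, the family $\{Z_n^2\}$ is uniformly integrable and $\tfrac1n\E[S_n^2] \to \sigma^2$. Expanding the square and using the invariance of $\mathfrak m$,
$$\tfrac1n\,\E[S_n^2] = \lp \mathfrak m, \widetilde\varphi^2\rp + 2\sum_{k=1}^{n-1}\Big(1-\tfrac kn\Big)\lp \mathfrak m, \widetilde\varphi\cdot(\widetilde\varphi\circ F^k)\rp,$$
and the series $\sum_{k\ge0}\lp\mathfrak m, \widetilde\varphi\cdot(\widetilde\varphi\circ F^k)\rp = \sum_{k\ge0}\lp\Lambda^k\widetilde\varphi,\widetilde\varphi\rp = \lp g,\widetilde\varphi\rp$ converges. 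Letting $n\to\infty$ yields $\sigma^2 = \lp\mathfrak m,\widetilde\varphi^2\rp + 2\sum_{k\ge1}\lp\mathfrak m,\widetilde\varphi\cdot(\widetilde\varphi\circ F^k)\rp$, which is the asserted formula after rewriting it as $-\lp\mathfrak m,\widetilde\varphi^2\rp + 2\sum_{n\ge0}\lp\mathfrak m,\widetilde\varphi\cdot(\widetilde\varphi\circ F^n)\rp$.

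The main obstacle is the construction of the corrector $g$: deducing $L^2$-convergence of $\sum_n\Lambda^n\widetilde\varphi$ (or of an equivalent object) from the summability hypothesis is delicate because the summands are not mutually orthogonal and $\Lambda$ need not be self-adjoint. The way around it, following Kipnis--Varadhan, is to introduce the regularized correctors $g_s := (\id - s\Lambda)^{-1}\widetilde\varphi$ for $s\uparrow 1$, to use that $\Lambda$ is an $L^2$-contraction in order to bound $\|g_s\|_{L^2}$ and $\|(\id-\Lambda)g_s - \widetilde\varphi\|_{L^2}$ uniformly, and to extract an $L^2$-limit $g$; in the self-adjoint case this is transparent via the spectral measure $\sigma_{\widetilde\varphi}$ of $\Lambda$ at $\widetilde\varphi$ and the finiteness of $\int(1-\lambda)^{-1}\,d\sigma_{\widetilde\varphi}(\lambda)$. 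Once $g \in L^2(\mathfrak m)$ is available, the remaining steps above are routine.
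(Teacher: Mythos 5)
The theorem is quoted in the paper from Gordin and Liverani without proof, so the comparison is with those proofs; measured against them, your proposal has a genuine gap at its central step. The hypothesis is $\sum_{n\geq 0}\|\Lambda^n\widetilde\varphi\|_{L^2(\mathfrak m)}^{2}<+\infty$, i.e.\ square-summability of the norms, and this does \emph{not} give convergence of $g=\sum_{n\geq 0}\Lambda^n\widetilde\varphi$ in $L^2(\mathfrak m)$; it only gives $\big\|\sum_{n\leq N}\Lambda^n\widetilde\varphi\big\|_{L^2}=o(\sqrt N)$, equivalently $(1-s)^{1/2}\|(\id-s\Lambda)^{-1}\widetilde\varphi\|_{L^2}\to 0$ as $s\uparrow 1$. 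In particular your Kipnis--Varadhan fallback does not repair this: the regularized correctors $g_s$ need not be bounded in $L^2$, so no limit $g$ can be extracted, and the spectral-measure argument is unavailable since $\Lambda$ is not self-adjoint here. Worse, the exact decomposition $\widetilde\varphi=D+\psi\circ F-\psi$ with $D,\psi\in L^2(\mathfrak m)$ can genuinely fail under the stated hypothesis. For example, on the one-sided Bernoulli shift with i.i.d.\ centred unit-variance coordinates $(\omega_k)_{k\geq 1}$, take $\widetilde\varphi(\omega)=\sum_k c_k\omega_k$ with $c_k\sim 1/(k\log k)$; then $\|\Lambda^n\widetilde\varphi\|_{L^2}^2=\sum_{k>n}c_k^2\sim 1/(n\log^2 n)$ is summable, but the partial sums $\sum_{n\leq N}\Lambda^n\widetilde\varphi=\sum_j\big(\sum_{n=1}^N c_{j+n}\big)\omega_j$ have coefficients tending to $b_j=\sum_{m>j}c_m\sim 1/\log j$, which are not square-summable, so no $L^2$ corrector exists even though the CLT holds. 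So the step is not merely under-justified: the route through an exact martingale--coboundary decomposition cannot work under the stated hypothesis.

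The cited proofs avoid this by using only an \emph{approximate} martingale decomposition. With the decreasing filtration $\mathcal G_m:=F^{-m}\mathcal B$ one writes $S_n=M_n+R_n$, where $R_n:=\E[S_n\mid\mathcal G_n]$ satisfies $\|R_n\|_{L^2}\leq\sum_{k=1}^n\|\Lambda^k\widetilde\varphi\|_{L^2}=o(\sqrt n)$ precisely because of the square-summability, and $M_n=S_n-R_n$ is a sum of $n$ reverse-martingale differences (built from the orthogonal projections $\E[\cdot\mid\mathcal G_m]-\E[\cdot\mid\mathcal G_{m+1}]$) which are asymptotically stationary but in general not of the form $D\circ F^m$ for a single $D\in L^2$; one then applies a martingale CLT for such arrays and identifies $\sigma^2=\lim n^{-1}\langle\mathfrak m,S_n^2\rangle$ by a Ces\`aro argument. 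Your remaining steps (telescoping, reverse-martingale CLT, non-coboundary $\Rightarrow\sigma>0$, variance identification) form the correct skeleton and would constitute a complete proof under the stronger hypothesis $\sum_n\|\Lambda^n\widetilde\varphi\|_{L^2}<+\infty$, which is in fact all the paper needs, since Proposition \ref{prop:gordin-condition} yields exponential decay of $\|\Lambda^n\widetilde\varphi\|_{L^2}$; but as a proof of the theorem as stated, the construction of $g$ is a real gap. A minor further point: $L^2$-boundedness of $Z_n$ does not make $\{Z_n^2\}$ uniformly integrable; you do not need that claim, since $\|S_n-M_n\|_{L^2}=o(\sqrt n)$ already gives $n^{-1}\langle\mathfrak m,S_n^2\rangle\to\sigma^2$.
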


Our approach is to first apply the above theorem to a certain dynamical system on $X= G^{\N^*} \times \P^1$ and the observable $\widetilde \varphi(\mathbf g,x) = \log \frac{\|g_1^{-1}v\|}{\|v\|} + \gamma$, where $\mathbf g = (g_1,g_2,\ldots) \in G^{\N^*}$. After that, we will translate the corresponding CLT to the CLT for the random variables $Y^v_n = \log \frac{ \|g_n \cdots g_1 \cdot v\|}{\|v\|}$. \\

Let $\mu$ be a non-elementary probability measure on $G=\PSL_2(\C)$ and denote by $\nu$ the unique $\mu$-stationary measure on $\P^1$. We have the following fundamental result, see \cite[Prop. II.3.3]{bougerol-lacroix}.

\begin{proposition}[Furstenberg] \label{prop:furstenberg} For almost every sequence $\mathbf g = (g_1,g_2,\ldots)$ there exists a point $Z(\mathbf g) \in \P^1$ such that $$\lim_{n \to \infty} (g_1 \cdots g_n)_* \nu = \delta_{Z(\mathbf g)}.$$

Furthermore the distribution of $Z(\mathbf g)$ is $\nu$, that is,
\begin{equation} \label{eq:furstenberg-average}
\int_{G^{\N^*}} \delta_{Z(\mathbf g)} \, \diff \mu^{\N^*}(\mathbf g) = \nu.
\end{equation}
\end{proposition}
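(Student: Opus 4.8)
The plan is to deduce the existence of the limit point $Z(\mathbf g)$ from a martingale convergence argument applied to the pushforwards $(g_1 \cdots g_n)_* \nu$, combined with the contraction-on-average properties guaranteed by non-elementarity, and then to identify the distribution of $Z(\mathbf g)$ using the stationarity of $\nu$. First I would consider, on the probability space $(G^{\N^*}, \mu^{\N^*})$, the sequence of $\P^1$-valued random measures $\nu_n(\mathbf g) := (g_1 \cdots g_n)_* \nu$. For a fixed continuous test function $\varphi$ on $\P^1$, the sequence $\langle \nu_n(\mathbf g), \varphi \rangle = \int_{\P^1} \varphi(g_1 \cdots g_n \cdot x)\, \diff\nu(x)$ is a bounded martingale with respect to the filtration $\mathcal F_n = \sigma(g_1, \ldots, g_n)$: indeed, by the stationarity $\mu * \nu = \nu$ one checks that $\E[\langle \nu_{n+1}, \varphi \rangle \mid \mathcal F_n] = \int_G \langle (g_1\cdots g_n g)_*\nu, \varphi\rangle\, \diff\mu(g) = \langle (g_1 \cdots g_n)_* (\mu * \nu), \varphi \rangle = \langle \nu_n, \varphi \rangle$. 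By Doob's martingale convergence theorem, $\langle \nu_n(\mathbf g), \varphi\rangle$ converges almost surely; taking a countable dense family of test functions $\varphi$ and using the compactness of $\P^1$, we obtain that $\nu_n(\mathbf g)$ converges weakly, almost surely, to a random probability measure $\nu_\infty(\mathbf g)$.

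The key step is then to show that this limit measure $\nu_\infty(\mathbf g)$ is almost surely a Dirac mass. This is where non-elementarity enters. One way to organize this: let $\ell(\mathbf g) := \int\int \dist(x,y)^2\, \diff\nu_\infty(\mathbf g)(x)\, \diff\nu_\infty(\mathbf g)(y)$ measure the ``spread'' of $\nu_\infty(\mathbf g)$; it suffices to show $\E[\ell] = 0$. Using the martingale property and a variance computation in the spirit of Proposition \ref{prop:CS-ineq} (the identity \eqref{eq:CS} is exactly of this contraction-on-average type), together with the fact established in Proposition \ref{prop:normlessthan1} that for non-elementary $\mu$ the operator $(f_\mu^N)^*$ strictly contracts $L^2_{(1,0)}$ norms, one shows that the expected spread must decay and hence vanish in the limit; otherwise $\nu_\infty$ would furnish a genuinely spread-out measure invariant on average under $S^N$, forcing $\mu$ to be elementary by the argument already carried out in the proof of Proposition \ref{prop:normlessthan1}. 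Once $\nu_\infty(\mathbf g) = \delta_{Z(\mathbf g)}$ almost surely, the map $\mathbf g \mapsto Z(\mathbf g)$ is measurable as an almost-everywhere limit of measurable maps.

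Finally, to identify the law of $Z(\mathbf g)$: for any continuous $\varphi$ on $\P^1$, dominated convergence and the martingale identity give
\begin{align*}
\int_{G^{\N^*}} \varphi(Z(\mathbf g))\, \diff\mu^{\N^*}(\mathbf g) &= \int_{G^{\N^*}} \lim_{n\to\infty} \langle (g_1\cdots g_n)_*\nu, \varphi\rangle\, \diff\mu^{\N^*}(\mathbf g) \\
&= \lim_{n\to\infty} \int_{G^{\N^*}} \langle (g_1\cdots g_n)_*\nu, \varphi\rangle\, \diff\mu^{\N^*}(\mathbf g) = \langle \nu, \varphi\rangle,
\end{align*}
where the last equality uses that the expectation of the martingale is constant and equal to its value at $n=0$, namely $\langle \nu, \varphi\rangle$. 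Since $\varphi$ is arbitrary, this is precisely \eqref{eq:furstenberg-average}. I expect the main obstacle to be the second step—proving that $\nu_\infty(\mathbf g)$ degenerates to a point—since this is the substantive use of non-elementarity and requires carefully combining the $L^2$-contraction from Proposition \ref{prop:normlessthan1} with the probabilistic structure; the martingale convergence and the distributional identity are comparatively routine. In practice one may instead simply invoke \cite[Prop. II.3.3]{bougerol-lacroix} as the excerpt does, but the above is the self-contained argument.
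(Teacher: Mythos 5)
The paper offers no proof of this statement at all—it is quoted from \cite[Prop. II.3.3]{bougerol-lacroix}—so the only issue is whether your self-contained sketch is complete. Your first and third steps are correct and standard: stationarity $\mu*\nu=\nu$ does make $\langle (g_1\cdots g_n)_*\nu,\varphi\rangle$ a bounded martingale for the filtration $\sigma(g_1,\dots,g_n)$, Doob's theorem plus a countable dense family of test functions and compactness of $\P^1$ give almost sure weak convergence to a random limit measure $\nu_\infty(\mathbf g)$, and—once one knows $\nu_\infty(\mathbf g)=\delta_{Z(\mathbf g)}$—the constancy of the martingale expectation identifies the law of $Z$ as $\nu$, which is exactly \eqref{eq:furstenberg-average}.

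The genuine gap is the middle step, which you flag as the main obstacle but do not actually prove, and the mechanism you propose would not work as stated. The limit $\nu_\infty(\mathbf g)$ is a \emph{random} measure satisfying only the equivariance $\nu_\infty(\mathbf g)=(g_1)_*\nu_\infty(T\mathbf g)$; it is not invariant under pullback by the elements of $S^N$, so the elementary/non-elementary dichotomy carried out in the proof of Proposition \ref{prop:normlessthan1}, which concerns a single measure invariant under all of $S^n$, cannot be invoked for it. Moreover, the spectral gap of Proposition \ref{prop:normlessthan1} controls the one-point transfer operator on $\P^1$, while your spread functional is a two-point quantity: $\E\big[\iint \dist(x,y)^2\,\diff\nu_n\,\diff\nu_n\big]=\iint\big(\int_G \dist(gx,gy)^2\,\diff\mu^{*n}(g)\big)\,\diff\nu(x)\,\diff\nu(y)$, and decay of this diagonal two-point motion is not a formal consequence of the $L^2_{(1,0)}$ contraction (nor of Theorem \ref{thm:equidistribution-images}); it is essentially equivalent to the proximality statement you are trying to prove. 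The classical ways to close the gap are either to show that almost surely every limit point $V$ of $g_1\cdots g_n/\|g_1\cdots g_n\|$ satisfies $\nu_\infty=V_*\nu$ and is of rank one (if some limit point were invertible one deduces $h_*\nu=\nu$ for all $h\in T_\mu$, which forces elementarity by an argument like the one in Proposition \ref{prop:normlessthan1}; then rank one plus the non-atomicity of $\nu$ gives a Dirac mass), or to use positivity of the Lyapunov exponent to contract distances. Either route is a substantive additional argument beyond Propositions \ref{prop:CS-ineq} and \ref{prop:normlessthan1}, so as written your proof is incomplete precisely at the point where non-elementarity must be used; citing \cite[Prop. II.3.3]{bougerol-lacroix}, as the paper does, remains the honest alternative.
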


An alternative way of phrasing the above result is to say that there exists a map $Z: G^{\N^*} \to \P^1$ defined $\mu^{\N^*}$-almost everywhere such that $Z_* \mu^{\N^*} = \nu$.\\

Let $X:= G^{\N^*} \times \P^1$. Consider the shift map
$$T: G^{\N^*} \to G^{\N^*}, \quad T ((g_1,g_2,\ldots)) = (g_2,g_3,\ldots)$$
and the fibered product
\begin{equation*}
F: X \to  X, \quad F(\mathbf g, x) = (T \mathbf g, g_1^{-1} \cdot x).
\end{equation*}

It follows from Proposition \ref{prop:furstenberg} that
\begin{equation} \label{eq:Z-invariance}
g_1^{-1} Z(\mathbf g) = Z(T \mathbf g) \quad \text{ and } \quad g_1 Z(T \mathbf g) = Z(\mathbf g).  
\end{equation}

In particular, $F$ maps $(\mathbf g,Z(\mathbf g))$ to $(T\mathbf g,Z(T\mathbf g))$. Define a probability measure $\mathfrak m$ on $X$ by
\begin{equation} \label{eq:def-mu-tilde}
\mathfrak m := \int_{G^{\N^*}} \delta_{\mathbf g} \otimes \delta_{Z(\mathbf g)} \, \diff \mu^{\N^*}(\mathbf g) .
\end{equation}

\begin{lemma}
The measure $\mathfrak m$ is $F$-invariant and ergodic.
\end{lemma}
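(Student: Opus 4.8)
The plan is to establish the two assertions separately, starting with invariance, which is the easier one. To show that $\mathfrak m$ is $F$-invariant, I would compute $F_* \mathfrak m$ directly from the definition \eqref{eq:def-mu-tilde}. Since $F(\mathbf g, Z(\mathbf g)) = (T\mathbf g, Z(T\mathbf g))$ by the identity \eqref{eq:Z-invariance}, we get
\begin{equation*}
F_* \mathfrak m = \int_{G^{\N^*}} \delta_{T\mathbf g} \otimes \delta_{Z(T\mathbf g)} \, \diff \mu^{\N^*}(\mathbf g) = \int_{G^{\N^*}} \delta_{\mathbf h} \otimes \delta_{Z(\mathbf h)} \, \diff (T_*\mu^{\N^*})(\mathbf h),
\end{equation*}
and since the product measure $\mu^{\N^*}$ is invariant under the shift $T$, the right-hand side is exactly $\mathfrak m$. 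So invariance reduces to the (standard) shift-invariance of the Bernoulli measure together with the equivariance property of $Z$.

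For ergodicity, I would use the fact that the shift $(G^{\N^*}, \mu^{\N^*}, T)$ is itself ergodic (indeed mixing, being a Bernoulli system), and that $\mathfrak m$ is carried by the graph $\{(\mathbf g, Z(\mathbf g))\}$, which is an $F$-invariant Borel set on which $F$ is conjugate, via the projection $\pi: (\mathbf g, x) \mapsto \mathbf g$, to the shift $T$ on $(G^{\N^*}, \mu^{\N^*})$. Concretely, $\pi$ intertwines $F$ and $T$, $\pi_* \mathfrak m = \mu^{\N^*}$, and $\pi$ restricted to the graph is a bijection with measurable inverse $\mathbf g \mapsto (\mathbf g, Z(\mathbf g))$; hence $(X, \mathfrak m, F)$ is measurably isomorphic to $(G^{\N^*}, \mu^{\N^*}, T)$, which transports ergodicity. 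I would spell this out by taking an $F$-invariant set $B \subset X$ with $\mathfrak m(B) > 0$; then $\pi(B \cap \mathrm{graph}(Z))$ is (up to null sets) a $T$-invariant subset of $G^{\N^*}$, so it has full $\mu^{\N^*}$-measure, whence $\mathfrak m(B) = 1$.

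The main point to be careful about — and the only place where anything beyond bookkeeping is needed — is the measurability of the map $Z$ and of the graph, so that the disintegration in \eqref{eq:def-mu-tilde} and the isomorphism argument make sense; but this is guaranteed by Proposition \ref{prop:furstenberg}, which provides $Z$ as a $\mu^{\N^*}$-almost everywhere defined measurable map with $Z_* \mu^{\N^*} = \nu$. I expect no serious obstacle: the entire lemma is a soft consequence of Furstenberg's theorem plus the Bernoulli structure of the base, and the argument is a routine instance of the fact that a skew product over an ergodic base, sitting on an invariant graph, inherits ergodicity. An alternative, if one prefers to avoid the explicit isomorphism, is to note that $\mathfrak m$ is the unique $F$-invariant measure projecting to $\mu^{\N^*}$ under $\pi$ (since the fiber over $\mathbf g$ must be concentrated at the point $Z(\mathbf g)$ forced by the contraction in Proposition \ref{prop:furstenberg}), and then ergodicity follows because any $F$-invariant ergodic component would also project to $\mu^{\N^*}$ and hence coincide with $\mathfrak m$.
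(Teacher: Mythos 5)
Your proposal is correct and follows essentially the same route as the paper, which deduces invariance from the equivariance relation \eqref{eq:Z-invariance} together with the shift-invariance of $\mu^{\N^*}$, and ergodicity from the ergodicity of the Bernoulli shift (citing \cite[p.33]{benoist-quint:book}); you simply spell out the measurable isomorphism with the base via the graph of $Z$, which is the standard way to make that deduction precise.
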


\begin{proof}
The result is well known. The invariance of $\mathfrak m$ follows from (\ref{eq:Z-invariance}) and a direct computation. The ergodicity of $\mathfrak m$ comes from the ergodicity of $\mu^{\N^*}$. See also \cite[p.33]{benoist-quint:book} for a more general statement.
\end{proof}



In what follows we identify functions on $\P^1$ with functions on $G^{\N^*} \times \P^1$ that depend only on the $\P^1$  variable. Similarly, we identify functions on $G \times \P^1$ with functions on $G^{\N^*} \times \P^1$ that depend only on the $\P^1$  variable and the first entry $g_1$ of the sequence $\mathbf g = (g_1, g_2, \ldots)$. Recall that $\Lambda$ is the adjoint of $F^*$ acting on $L^2(\mathfrak m)$.

\begin{lemma} \label{lemma:adjoint}
Let $\psi$ be a function on $G \times \P^1$ viewed as a function on $G^{\N^*} \times \P^1$. Assume that $\psi \in L^2(\mathfrak m)$. Then $\Lambda \psi (\mathbf g, x)$ depends only on $x$ and is given by
\begin{equation} \label{eq:adjoint-formula}
\Lambda \psi (x) = \int_G \psi (h, h \cdot x)\, \diff \mu(h).
\end{equation}

In particular, if $\psi$ is a function on $\P^1$ viewed as a function on $G^{\N^*} \times \P^1$ we have $\Lambda \psi = f_\mu^* \psi$.

\end{lemma}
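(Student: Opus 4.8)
The plan is to compute the adjoint $\Lambda$ directly from its defining property, namely $\langle F^*\phi, \psi\rangle_{L^2(\mathfrak m)} = \langle \phi, \Lambda\psi\rangle_{L^2(\mathfrak m)}$ for all $\phi, \psi \in L^2(\mathfrak m)$, and then to unwind the definitions of $F$ and $\mathfrak m$. First I would take an arbitrary test function $\phi$ on $X$ and write out
\[
\langle F^*\phi, \psi \rangle_{L^2(\mathfrak m)} = \int_X \phi(F(\mathbf g, x))\, \psi(\mathbf g, x) \, \diff \mathfrak m(\mathbf g, x) = \int_{G^{\N^*}} \phi(T\mathbf g, g_1^{-1} Z(\mathbf g))\, \psi(g_1, Z(\mathbf g))\, \diff \mu^{\N^*}(\mathbf g),
\]
using $F(\mathbf g, x) = (T\mathbf g, g_1^{-1}x)$, the fact that $\psi$ depends only on $(g_1, x)$, and that $\mathfrak m$ is supported on the graph of $Z$. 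By \eqref{eq:Z-invariance} we have $g_1^{-1} Z(\mathbf g) = Z(T\mathbf g)$, so the integrand becomes $\phi(T\mathbf g, Z(T\mathbf g))\, \psi(g_1, Z(\mathbf g))$.

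The next step is to exploit the product structure $\mu^{\N^*} = \mu \otimes \mu^{\N^*}$ with respect to the splitting $\mathbf g = (g_1, \mathbf g')$ where $\mathbf g' = T\mathbf g$. Conditioning on $g_1$, I would integrate first over $g_1$ with the remaining coordinates fixed. Here one uses the key consistency property of the Furstenberg map: for fixed $\mathbf g' \in G^{\N^*}$ and $g_1 = h \in G$, the boundary point $Z(\mathbf g)$ of the sequence $(h, \mathbf g')$ equals $h \cdot Z(\mathbf g')$ — this is exactly the second identity in \eqref{eq:Z-invariance} read as $Z(\mathbf g) = g_1 Z(T\mathbf g)$. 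Substituting, the integrand is $\phi(\mathbf g', Z(\mathbf g'))\, \psi(h, h \cdot Z(\mathbf g'))$, and since $\phi(\mathbf g', Z(\mathbf g'))$ no longer involves $h$, the integral over $h$ factors out as
\[
\int_G \psi(h, h \cdot Z(\mathbf g'))\, \diff \mu(h).
\]
Thus $\langle F^*\phi, \psi\rangle_{L^2(\mathfrak m)} = \int_{G^{\N^*}} \phi(\mathbf g', Z(\mathbf g')) \Big( \int_G \psi(h, h\cdot Z(\mathbf g'))\, \diff\mu(h)\Big) \diff\mu^{\N^*}(\mathbf g')$, which by the definition of $\mathfrak m$ equals $\langle \phi, \Psi\rangle_{L^2(\mathfrak m)}$ where $\Psi(\mathbf g', x) := \int_G \psi(h, h\cdot x)\, \diff\mu(h)$ depends only on $x$. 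Since $\phi$ was arbitrary, this identifies $\Lambda\psi = \Psi$, which is \eqref{eq:adjoint-formula}.

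For the final assertion, if $\psi$ is a function of $x$ alone then $\psi(h, h\cdot x) = \psi(h\cdot x)$, so $\Lambda\psi(x) = \int_G \psi(h\cdot x)\, \diff\mu(h) = f_\mu^*\psi(x)$ by the definition \eqref{eq:transition-operator} of the transfer operator. The main (minor) obstacle here is being careful about the measure-theoretic bookkeeping: the map $Z$ is only defined $\mu^{\N^*}$-almost everywhere, so all the identities in \eqref{eq:Z-invariance} and the substitution $Z(h,\mathbf g') = h \cdot Z(\mathbf g')$ hold only almost surely, and one should check the integrability needed to apply Fubini — this is ensured by $\psi \in L^2(\mathfrak m)$ together with Cauchy–Schwarz. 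No genuine difficulty beyond this routine care is expected; the computation is essentially forced once one has the cocycle relations for $Z$ and the product structure of $\mu^{\N^*}$.
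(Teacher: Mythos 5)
Your proposal is correct and follows essentially the same route as the paper: compute $\langle F^*\phi,\psi\rangle_{L^2(\mathfrak m)}$, use the cocycle identities \eqref{eq:Z-invariance} to rewrite the arguments, then split off the first coordinate via the product structure of $\mu^{\N^*}$ (equivalently, its $T$-invariance) to factor out the inner integral $\int_G \psi(h, h\cdot x)\,\diff\mu(h)$ and conclude by the arbitrariness of $\phi$. Your added remarks on the almost-everywhere validity of the identities for $Z$ and the Fubini/integrability check are a harmless (and slightly more careful) elaboration of what the paper leaves implicit.
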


\begin{proof}
Let $\psi$ be as above. Using (\ref{eq:Z-invariance}) we have, for any $\phi \in L^2(\mathfrak m)$,
\begin{equation*}
\begin{split}
\lp \phi, \Lambda \psi \rp_{L^2(\mathfrak m)} &= \lp F^*\phi, \psi \rp_{L^2(\mathfrak m)} = \int_{G^{\N^*}} \phi(T \mathbf g, g_1^{-1} Z(\mathbf g)) \, \psi( g_1 , Z(\mathbf g)) \diff \mu^{\N^*}(\mathbf g) \\ &= \int_{G^{\N^*}} \phi(T \mathbf g,  Z( T \mathbf g)) \psi(g_1, g_1 Z(T\mathbf g)) \diff \mu^{\N^*}(\mathbf g) \\ &= \int_{G^{\N^*}}  \phi(T \mathbf g,  Z( T \mathbf g)) \left( \int_G \psi(g_1, g_1 Z(T\mathbf g) \diff \mu(g_1) \right) \diff \mu^{\N^*}(T \mathbf g) \\ &=  \int_{G^{\N^*}}  \phi(\mathbf g',  Z( \mathbf g')) \left( \int_G \psi(g_1, g_1 Z(\mathbf g') \diff \mu(g_1) \right) \diff \mu^{\N^*}(\mathbf g') \\ &= \Big \lp \phi, \int_G \psi(g_1, g_1 \cdot x) \diff \mu(g_1) \Big \rp_{L^2(\mathfrak m)},
\end{split}
\end{equation*} 
where on the second to last step we used the change of coordinates $\mathbf g'= T \mathbf g$ and the fact that $\mu^{\N^*}$ is $T$-invariant. Since $\phi \in L^2(\mathfrak m)$ is arbitrary, this proves (\ref{eq:adjoint-formula}). The final statement is straightforward. This proves the lemma.
\end{proof}

\begin{lemma} \label{lemma:m=nu}
If $\psi \in L^2(\mathfrak m)$ depends only on the $\P^1$ variable then  $ \lp \mathfrak m ,\psi \rp = \lp \nu ,\psi \rp$. In particular, for such $\psi$ we have $\| \psi\|_{L^p(\mathfrak m)} = \|\psi\|_{L^p(\nu)}$ for $p=1$ or $2$.
\end{lemma}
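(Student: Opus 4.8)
The plan is to unwind the definition of $\mathfrak m$ in \eqref{eq:def-mu-tilde} and use the fact that $\psi$ only depends on the $\P^1$ variable. Concretely, since $\mathfrak m = \int_{G^{\N^*}} \delta_{\mathbf g} \otimes \delta_{Z(\mathbf g)} \, \diff \mu^{\N^*}(\mathbf g)$, for any bounded measurable $\psi$ on $\P^1$ (viewed on $X$) we have
\begin{equation*}
\lp \mathfrak m, \psi \rp = \int_{G^{\N^*}} \psi(Z(\mathbf g)) \, \diff \mu^{\N^*}(\mathbf g) = \int_{\P^1} \psi \, \diff (Z_* \mu^{\N^*}) = \int_{\P^1} \psi \, \diff \nu,
\end{equation*}
where the last equality is exactly Furstenberg's identity \eqref{eq:furstenberg-average}, i.e.\ $Z_* \mu^{\N^*} = \nu$. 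This gives $\lp \mathfrak m, \psi \rp = \lp \nu, \psi \rp$ for such $\psi$.

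For the second assertion, I would simply apply the first one to $|\psi|$ and to $|\psi|^2$: both are again functions of the $\P^1$ variable only, and they lie in $L^1(\mathfrak m)$ (resp.\ belong to the relevant class) precisely when $\psi \in L^1(\mathfrak m)$ (resp.\ $L^2(\mathfrak m)$). Hence $\|\psi\|_{L^1(\mathfrak m)} = \lp \mathfrak m, |\psi| \rp = \lp \nu, |\psi| \rp = \|\psi\|_{L^1(\nu)}$, and likewise $\|\psi\|_{L^2(\mathfrak m)}^2 = \lp \mathfrak m, |\psi|^2 \rp = \lp \nu, |\psi|^2 \rp = \|\psi\|_{L^2(\nu)}^2$. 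One should note at the outset that $\psi$ being in $L^2(\mathfrak m)$ and depending only on $\P^1$ forces $\psi$ to be $\nu$-measurable and in $L^2(\nu)$, so there is no subtlety about the expressions being defined.

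There is essentially no obstacle here; the only point requiring a word of care is the measure-theoretic bookkeeping, namely that pushing $\mathfrak m$ forward under the projection $X \to \P^1$ yields $Z_* \mu^{\N^*}$, which is $\nu$ by Proposition \ref{prop:furstenberg}. Everything else is the definition of the pushforward measure and Fubini/Tonelli, which I would not spell out in detail.
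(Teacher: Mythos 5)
Your proposal is correct and follows essentially the same argument as the paper: unwind the definition of $\mathfrak m$ from (\ref{eq:def-mu-tilde}), invoke $Z_*\mu^{\N^*}=\nu$ from (\ref{eq:furstenberg-average}), and apply the resulting identity to $|\psi|$ and $|\psi|^2$ for the $L^p$ statement. Nothing further is needed.
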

\begin{proof}
From the definition of $\mathfrak m$ and the fact that $Z_*\mu^{\N^*} = \nu$ (cf. eq. (\ref{eq:furstenberg-average})), it follows that 
$$\int_{G^{\N^*} \times \P^1} \psi (x) \, \diff \mathfrak m (\mathbf g,x)= \int_{G^{\N^*}} \psi(Z(\mathbf g)) \, \diff \mu^{\N^*}(\mathbf g) = \int_{\P^1} \psi(x) \, \diff \nu(x).$$

This gives us the first assertion. Similar identities for $|\psi|$ and $|\psi|^2$ give the second assertion.
\end{proof}

Consider now the function
\begin{equation} \label{eq:def-phi}
\varphi: G^{\N^*} \times \P^1 \to \R, \quad \varphi(\mathbf g, x) = \log \frac{\|g_1^{-1}\cdot v\|}{\|v\|}, \quad  x = [v].
\end{equation}

Notice that $\varphi \circ F^j(\mathbf g, x) = \varphi(T^j \mathbf g, g_j^{-1} \cdots g_1^{-1}\cdot x)= \log \frac{\|g_{j+1}^{-1} g_j^{-1} \cdots g_1^{-1}\cdot v\|}{\|g_j^{-1} \cdots g_1^{-1} \cdot v\|}$ . So,  the associated Birkhoff sum is
\begin{equation} \label{eq:birkhoff}
\sum_{j=0}^{n-1} \varphi \circ F^j (\mathbf g,x) = \log \frac{\|g_n^{-1} \cdots g_1^{-1} \cdot v\|}{\|v\|} , \quad x =[v].
\end{equation}

\begin{lemma} \label{lemma:average-phi}
We have $\lp \mathfrak m, \varphi \rp = -\gamma$.
\end{lemma}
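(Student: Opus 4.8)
The plan is to compute $\langle \mathfrak{m}, \varphi \rangle$ directly from the definition of $\mathfrak{m}$ and compare it with the integral formula~\eqref{eq:lyap-integral-formula} for the Lyapunov exponent, using the invariance relations~\eqref{eq:Z-invariance}. Explicitly, since $\varphi(\mathbf g, x)$ depends only on $g_1$ and $x = [v]$, we have
\begin{equation*}
\langle \mathfrak m, \varphi \rangle = \int_{G^{\N^*}} \varphi(\mathbf g, Z(\mathbf g))\, \diff \mu^{\N^*}(\mathbf g) = \int_{G^{\N^*}} \log \frac{\|g_1^{-1} \cdot w(\mathbf g)\|}{\|w(\mathbf g)\|}\, \diff \mu^{\N^*}(\mathbf g),
\end{equation*}
where $w(\mathbf g)$ is a lift of $Z(\mathbf g)$ (the ratio being independent of the choice of lift).

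The key step is to rewrite this in terms of $Z(T\mathbf g)$ rather than $Z(\mathbf g)$. By~\eqref{eq:Z-invariance} we have $g_1^{-1} Z(\mathbf g) = Z(T\mathbf g)$, equivalently $Z(\mathbf g) = g_1 Z(T\mathbf g)$; choosing the lift $w(\mathbf g) = g_1 \cdot w'$ where $w'$ is a lift of $Z(T\mathbf g)$, we get
\begin{equation*}
\log \frac{\|g_1^{-1} \cdot w(\mathbf g)\|}{\|w(\mathbf g)\|} = \log \frac{\|w'\|}{\|g_1 \cdot w'\|} = - \log \frac{\|g_1 \cdot w'\|}{\|w'\|}.
\end{equation*}
Hence $\langle \mathfrak m, \varphi \rangle = - \int_{G^{\N^*}} \log \frac{\|g_1 \cdot w'(\mathbf g)\|}{\|w'(\mathbf g)\|}\, \diff \mu^{\N^*}(\mathbf g)$, where now $w'(\mathbf g)$ is a lift of $Z(T\mathbf g)$. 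Since $g_1$ and $T\mathbf g$ are independent under $\mu^{\N^*}$, and $Z(T\mathbf g)$ has distribution $\nu$ (because $Z_* \mu^{\N^*} = \nu$ and $\mu^{\N^*}$ is $T$-invariant, so $(Z \circ T)_* \mu^{\N^*} = \nu$ as well), the integral factors as
\begin{equation*}
\langle \mathfrak m, \varphi \rangle = - \int_{\P^1} \int_G \log \frac{\|g \cdot v\|}{\|v\|}\, \diff \mu(g)\, \diff \nu(x), \quad x = [v],
\end{equation*}
which is precisely $-\gamma$ by the integral formula~\eqref{eq:lyap-integral-formula}.

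The main point requiring care is the integrability: the reordering and the factorization of the integral are justified by Fubini's theorem, which applies once we know $\varphi \in L^1(\mathfrak m)$. This follows from the first moment assumption, since $\big| \log \frac{\|g_1^{-1} \cdot v\|}{\|v\|} \big| \leq \log \|g_1^{-1}\| = \log \|g_1\|$ and $\int_G \log\|g\|\, \diff\mu(g) < +\infty$; in particular $\varphi \in L^2(\mathfrak m)$ as well, as needed for the later application of the Gordin--Liverani theorem. The only mild subtlety is tracking that the sign flip comes out correctly when passing from $g_1^{-1}$ acting on $Z(\mathbf g)$ to $g_1$ acting on $Z(T\mathbf g)$, but this is exactly the content of~\eqref{eq:Z-invariance}.
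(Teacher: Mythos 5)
Your proof is correct and follows essentially the same route as the paper: rewrite the integrand via the equivariant lift relation from (\ref{eq:Z-invariance}), use the independence of $g_1$ and $T\mathbf g$ together with $Z_*\mu^{\N^*}=\nu$ to factor the integral, and conclude with the integral formula (\ref{eq:lyap-integral-formula}) for $\gamma$. The explicit integrability check via $\big|\log\frac{\|g_1^{-1}\cdot v\|}{\|v\|}\big|\leq\log\|g_1\|$ is a welcome addition that the paper leaves implicit.
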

\begin{proof}
Let $Z(\mathbf g) \in \P^1$ be as in Proposition \ref{prop:furstenberg}. Let $W(\mathbf g) \in \C^2\setminus \{0\}$ be a lift of  $Z(\mathbf g)$. By (\ref{eq:Z-invariance}) we can chose $W$ so that $g_1^{-1} W(\mathbf g) = W(T \mathbf g)$. Using the definition of $\mathfrak m$, the fact that $\mu^{\N^*}$ is invariant by $T$ and equations (\ref{eq:furstenberg-average}) and (\ref{eq:lyap-integral-formula}) we get 
\begin{align*}
\lp \mathfrak m, \varphi \rp &= \int_{G^{\N^*}} \varphi(Z(\mathbf g)) \, \diff \mu^{\N^*} (\mathbf g) = \int_{G^{\N^*}} \log \frac{\|g_1^{-1} W(\mathbf g)\|}{\|W(\mathbf g)\|} \, \diff \mu^{\N^*} (\mathbf g) \\
&= \int_{G^{\N^*}} \log \frac{\| W(T \mathbf g)\|}{\|g_1 W( T \mathbf g)\|} \, \diff \mu^{\N^*} (\mathbf g) = \int_G \int_{G^{\N^*}} \log \frac{\| W( \mathbf g')\|}{\|g_1 W(\mathbf g')\|} \, \diff \mu^{\N^*} (\mathbf g') \, \diff \mu(g_1) \\
& = \int_G \int_{\P^1} \log \frac{\|v\|}{\|g_1 \cdot v\|} \diff \nu(x) \diff \mu(g_1) = - \gamma.
\end{align*}

The lemma follows.
\end{proof}

\begin{proposition} \label{prop:gordin-condition}
Let $\varphi$ be the function in (\ref{eq:def-phi}). Then $\widetilde \varphi = \varphi - \lp \mathfrak m, \varphi \rp$ belongs to $L^2(\mathfrak m)$ and satisfies Gordin's condition.  Namely, $$\sum_{n \geq 0} \|\Lambda^n \widetilde \varphi\|^2_{L^2(\mathfrak m)} < + \infty.$$
\end{proposition}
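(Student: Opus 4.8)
The plan is to split $\varphi$ into a part that is a coboundary plus a ``good'' part that is a function of $x$ alone, and then to exploit the exponential mixing from Theorem \ref{thm:exponential-convergence}. Concretely, I would first observe that $\varphi(\mathbf g,x) = \log\frac{\|g_1^{-1}\cdot v\|}{\|v\|}$ can be written, using the cocycle relation, as
\begin{equation*}
\varphi(\mathbf g,x) = u(g_1^{-1}\cdot x) - u(x) + \sigma(g_1,x),
\end{equation*}
where $u(x) = \tfrac12\log\|v\|^2$ is (only formally) defined on $\C^2$; more usefully, I would look for a genuine function $\rho$ on $\P^1$ and a function $\psi_0$ on $\P^1$ so that $\widetilde\varphi = \psi_0 \circ (\text{projection}) + (\rho\circ F - \rho)$ up to an $L^2$ error controlled by $\Lambda$. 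The cleanest route: set $\psi(x) := \int_G \log\frac{\|h\cdot v\|}{\|v\|}\,\diff\mu(h)$, so that by Lemma \ref{lemma:adjoint} and \eqref{eq:lyap-integral-formula} we have $\langle\nu,\psi\rangle = \gamma$. Then I claim $\Lambda\widetilde\varphi$ equals (a function cohomologous to) something expressible through $f_\mu^*$ applied to $\psi - \langle\nu,\psi\rangle$, because $\Lambda$ of a function depending on $(g_1,x)$ integrates out $g_1$ by \eqref{eq:adjoint-formula}.

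The key computation I would carry out is: apply Lemma \ref{lemma:adjoint} to $\varphi$ itself (which depends on $g_1$ and $x$) to get $\Lambda\varphi(x) = \int_G \varphi(h, h\cdot x)\,\diff\mu(h) = \int_G \log\frac{\|h^{-1}h\cdot v\|}{\|h\cdot v\|}\,\diff\mu(h)$. But $h^{-1}h\cdot v = v$, so $\Lambda\varphi(x) = -\int_G \log\frac{\|h\cdot v\|}{\|v\|}\,\diff\mu(h) = -\psi(x)$, a function on $\P^1$ alone. Hence $\Lambda\widetilde\varphi = \Lambda\varphi - \langle\mathfrak m,\varphi\rangle = -\psi + \gamma = -(\psi - \langle\nu,\psi\rangle)$ by Lemma \ref{lemma:average-phi} and Lemma \ref{lemma:m=nu}. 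Therefore $\Lambda^n\widetilde\varphi = \Lambda^{n-1}(-(\psi-\langle\nu,\psi\rangle))$ for $n\geq 1$, and since $\psi-\langle\nu,\psi\rangle$ is a function on $\P^1$, Lemma \ref{lemma:adjoint} gives $\Lambda^{n-1}(\psi - \langle\nu,\psi\rangle) = (f_\mu^{n-1})^*(\psi - \langle\nu,\psi\rangle)$ whenever $\psi \in W^{1,2}$.

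At this point everything reduces to two things: (i) showing $\psi\in W^{1,2}$ under the second moment hypothesis, and (ii) invoking Theorem \ref{thm:exponential-convergence}. For (ii), once $\psi\in W^{1,2}$ we have $\|(f_\mu^{n-1})^*(\psi-\langle\nu,\psi\rangle)\|_{L^2(\nu)} \lesssim \|(f_\mu^{n-1})^*(\psi-\langle\nu,\psi\rangle)\|_{W^{1,2}} \leq A\|\del\psi\|_{L^2}\lambda^{n-1}$, which is summable in $n$ after squaring; combined with Lemma \ref{lemma:m=nu} to pass between $L^2(\mathfrak m)$ and $L^2(\nu)$ norms, and with the trivial bound $\|\widetilde\varphi\|_{L^2(\mathfrak m)}<\infty$ for the $n=0$ term, Gordin's condition $\sum_n\|\Lambda^n\widetilde\varphi\|^2_{L^2(\mathfrak m)}<\infty$ follows. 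I also need $\widetilde\varphi\in L^2(\mathfrak m)$ itself: this is where the second moment condition enters, since $\int_{G^{\N^*}}\varphi(Z(\mathbf g))^2\,\diff\mu^{\N^*}$ involves $\big(\log\frac{\|g_1^{-1}W(\mathbf g)\|}{\|W(\mathbf g)\|}\big)^2$, bounded by $(\log\|g_1\|)^2$ up to constants, which is integrable.

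The main obstacle is step (i): verifying $\psi \in W^{1,2}$, i.e.\ that $\del\psi \in L^2_{(0,1)}$, where $\psi(x) = \int_G \log\frac{\|h\cdot v\|}{\|v\|}\,\diff\mu(h)$. Writing $x$ in an affine chart, $\log\frac{\|h\cdot v\|}{\|v\|}$ is a potential-type function whose $\del$ has a mild singularity as $h\cdot x$ approaches a point where $h$ blows up directions; one controls $\|\del \log\frac{\|h\cdot v\|}{\|v\|}\|_{L^2}$ by a quantity growing like $\log\|h\|$ (analogous to the estimate \eqref{eq:g*phi-L2norm} and Lemma \ref{lemma:jacobian-estimate}), and then the first moment hypothesis already suffices to make $\int_G\|\del(\cdots)\|_{L^2}\diff\mu(h)$ finite by Minkowski's integral inequality — so in fact the $W^{1,2}$ membership of $\psi$ should need only the first moment, and the second moment is needed purely to place $\widetilde\varphi$ in $L^2(\mathfrak m)$. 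I would spell out this $L^2$ estimate on $\del\log\frac{\|h\cdot v\|}{\|v\|}$ carefully, reducing via Cartan's decomposition $h = kak'$ to the diagonal case as in Lemma \ref{lemma:jacobian-estimate}, where the computation is explicit.
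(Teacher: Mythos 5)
Your reduction is the same as the paper's: you correctly compute, via Lemma \ref{lemma:adjoint}, that $\Lambda\widetilde\varphi$ is the function $x\mapsto \int_G \log\frac{\|v\|}{\|g\cdot v\|}\,\diff\mu(g)+\gamma$ on $\P^1$, so $\Lambda^n\widetilde\varphi=(f_\mu^*)^{n-1}$ of a mean-zero Sobolev function; the membership $\widetilde\varphi\in L^2(\mathfrak m)$ from the second moment and the $W^{1,2}$-bound $\|\partial\log\frac{\|g\cdot v\|}{\|v\|}\|_{L^2}\lesssim 1+\log\|g\|$ via Cartan's decomposition (needing only the first moment) are exactly what the paper does, the latter being its appendix Lemma \ref{lemma:theta_g-estimate}. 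The first paragraph's coboundary decomposition is never used and can be dropped.

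However, there is a genuine gap at the decisive step (ii). You pass from Theorem \ref{thm:exponential-convergence} to Gordin's condition by asserting
$\|(f_\mu^{n-1})^*(\psi-\langle\nu,\psi\rangle)\|_{L^2(\nu)}\lesssim\|(f_\mu^{n-1})^*(\psi-\langle\nu,\psi\rangle)\|_{W^{1,2}}$,
i.e.\ an embedding $W^{1,2}\hookrightarrow L^2(\nu)$. Nothing in the paper provides this, and it is false in general: Theorem \ref{thm:exponential-convergence}(2) only says that $\nu$ is a continuous \emph{linear functional} on $W^{1,2}$, which (applied to $|h|$, using $\|\,|h|\,\|_{W^{1,2}}\lesssim\|h\|_{W^{1,2}}$) controls $\|h\|_{L^1(\nu)}$ but not $\|h\|_{L^2(\nu)}$, since $h^2$ need not lie in $W^{1,2}$ when $h$ is unbounded ($\partial(h^2)=2h\,\partial h$ need not be in $L^2$). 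The stationary measure $\nu$ can be very singular, and under a mere second moment hypothesis no H\"older-type regularity of $\nu$ is available a priori — indeed the whole point of this proof is to avoid using such regularity. The paper closes exactly this gap by an interpolation argument: from the exponential $W^{1,2}$ convergence one gets $\|\Lambda^n\widetilde\varphi\|_{L^1(\nu)}\lesssim\lambda^n$; separately, the explicit formula $\Lambda^n\varphi(x)=\int_{G^n}\log\frac{\|g_2\cdots g_n\cdot v\|}{\|g_1g_2\cdots g_n\cdot v\|}\,\diff\mu(g_1)\cdots\diff\mu(g_n)$ gives the uniform bound $\|\Lambda^n\widetilde\varphi\|_{L^\infty}\leq C$ (using the finite first moment); interpolating $L^1(\nu)$–$L^\infty(\nu)$ yields $\|\Lambda^n\widetilde\varphi\|_{L^2(\mathfrak m)}=\|\Lambda^n\widetilde\varphi\|_{L^2(\nu)}\lesssim\lambda^{n/2}$, which is square-summable. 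Without this (or an equivalent substitute for your unjustified $L^2(\nu)$ bound), your argument does not reach Gordin's condition.
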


\begin{proof}
Let us first check that $\widetilde \varphi \in L^2(\mathfrak m)$. Let $W$ be a lift of $Z$ as in the proof of Lemma \ref{lemma:average-phi}. We have
\begin{align*}
\lp \mathfrak m, |\varphi|^2 \rp &= \int_{G^{\N^*} \times \P^1}   \left( \log \frac{\|g_1^{-1}\cdot v\|}{\|v\|} \right)^2 \diff \mathfrak m (\mathbf g, x) = \int_{G^{\N^*}} \left( \log \frac{\|g_1^{-1}\cdot W(\mathbf g)\|}{\|W(\mathbf g)\|} \right)^2\diff  \mu^{\N^*}(\mathbf g) \\
 & \leq \int_G \sup_{x \in \P^1, [v]=x} \left( \log \frac{\|g_1^ {-1}\cdot v\|}{\|v\|} \right)^2 \diff  \mu(g_1) = \int_G (\log \|g_1\|)^2 \, \diff \mu(g_1) < + \infty,
\end{align*}
where we have used that $\|g\| = \|g^{-1}\|$ for $g \in \SL_2(\C)$ and the assumption that $\mu$ has a finite second  moment. So $\varphi \in L^2(\mathfrak m )$, which implies that $ \widetilde \varphi \in L^2(\mathfrak m )$ as claimed.

Let us now prove Gordin's estimate. We begin by noticing that $\widetilde \varphi(\mathbf g,x)$ depends only on the first entry of $\mathbf g$, so we may apply Lemma \ref{lemma:adjoint}.  Then $$\psi (x) : = \Lambda \widetilde \varphi (x) = \int_G \log \frac{\|v\|}{\|g \cdot v \|} \diff \mu (g) + \gamma, \quad x = [v]$$ depends only on the $\P^1$ variable and $$\Lambda^n \widetilde \varphi = \Lambda^{n-1} \psi = (f_\mu^*)^{n-1} \psi.$$

We claim that $\psi \in W^{1,2}$. In order to see that, define $\theta_g(x) := \log \frac{\|g \cdot v\|}{\|v\|}$, $x=[v]$. Then $\psi (x) = \int_G - \theta_g(x) \diff \mu (g) + \gamma$.  Now, each $\theta_g$ is a smooth function and we have from Lemma \ref{lemma:theta_g-estimate} in the appendix that $\|\theta_g\|_{W^{1,2}} \lesssim 1 + \log \|g\|$. Then $$\|\psi\|_{W^{1,2}} \leq \int_G \|\theta_g\|_{W^{1,2}} \diff \mu(g) + \gamma \lesssim \int_G (1+\log \|g\|) \diff \mu (g) + \gamma < + \infty,$$ showing that $\psi \in W^{1,2}$.

Now, from Lemma \ref{lemma:m=nu} and the invariance of $\mathfrak m$ we get $$\lp \nu, \psi \rp = \lp \mathfrak m ,\psi \rp = \lp \mathfrak m ,\Lambda \widetilde \varphi \rp = \lp \mathfrak m,  \widetilde \varphi \rp = 0.$$ This can also be checked directly using (\ref{eq:lyap-integral-formula}) and the expression of $\psi$.

From Theorem \ref{thm:exponential-convergence} we have that $(f_\mu^*)^{n-1} \psi$ converges to $\lp \nu, \psi \rp = 0$ in $W^{1,2}$ exponentially fast. Since, also by Theorem \ref{thm:exponential-convergence}, $\nu$ acts continuously on $W^{1,2}$ and $\|\, |h|\, \| _{W^{1,2}} \lesssim \|h\| _{W^{1,2}}$ for $h \in W^{1,2}$ (cf. \cite[Prop. 4.1]{dinh-sibony:decay-correlations}) we get $$\|\Lambda^n \widetilde \varphi \|_{L^1(\nu)} = \|(f_\mu^*)^{n-1} \psi\|_{L^1(\nu)} \lesssim \lambda^n$$ for some constant $0 < \lambda < 1$.

Observe now that, using Lemma \ref{lemma:adjoint}
\begin{align*}
\Lambda^n \varphi (x) &= (f_\mu^*)^{n-1} \int_G \varphi(g_1,g_1 \cdot x) \diff \mu(g_1) = (f_\mu^*)^{n-1} \int_G \log \frac{\|v\|}{\|g_1 \cdot v\|} \diff \mu(g_1) \\ &= \int_{G^n} \log \frac{\|g_2 g_3 \cdots g_n \cdot v\|}{\|g_1 g_2 \cdots g_n \cdot v \|} \diff\mu(g_1) \cdots \diff \mu(g_n).
\end{align*}

Hence $\|\Lambda^n \varphi\|_\infty \leq \int_G \log \|g_1\| \diff \mu(g_1)$ for every $n \geq 1$. In particular, there is a constant $C$ such that $\|\Lambda^n \widetilde \varphi\|_{\infty} \leq C$ for every $n \geq 1$. 

By interpolating between the spaces $L^\infty(\nu) \subset L^2(\nu) \subset L^1(\nu)$ we conclude that
 $$\|\Lambda^n \widetilde \varphi\|_{L^2(\mathfrak m)} = \|\Lambda^n \widetilde \varphi\|_{L^2(\nu)} \lesssim  \|\Lambda^n \widetilde \varphi\|_{L^\infty(\nu)}^{1 \slash 2}  \|\Lambda^n \widetilde \varphi\|_{L^1(\nu)}^{1 \slash 2}  \lesssim \lambda^{n \slash 2},$$ which gives  $\sum_{n \geq 0} \|\Lambda^n \widetilde \varphi\|^2_{L^2(\mathfrak m)} < + \infty$. The proof is complete.
\end{proof}

\begin{lemma} \label{lemma:not-coboundary}
The function $\widetilde \varphi = \varphi - \lp \mathfrak m,\varphi \rp$ is not a coboundary.
\end{lemma}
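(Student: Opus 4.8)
The plan is to argue by contradiction: suppose $\widetilde\varphi = \psi\circ F - \psi$ for some $\psi\in L^2(\mathfrak m)$. The strategy is to use the ergodicity of $(X,\mathfrak m,F)$ together with the structure of the Birkhoff sums computed in \eqref{eq:birkhoff}, namely $\sum_{j=0}^{n-1}\varphi\circ F^j(\mathbf g,x) = \log\frac{\|g_n^{-1}\cdots g_1^{-1}\cdot v\|}{\|v\|}$, to derive a contradiction with $\sigma>0$ — or more precisely, to show that the coboundary assumption would force the Lyapunov exponent to manifest in a degenerate way incompatible with non-elementarity. Concretely, if $\widetilde\varphi$ is a coboundary then the Birkhoff sums $S_n\widetilde\varphi = \sum_{j=0}^{n-1}\widetilde\varphi\circ F^j = \psi\circ F^n - \psi$ stay bounded in $L^2(\mathfrak m)$, in fact $\|S_n\widetilde\varphi\|_{L^2(\mathfrak m)}\le 2\|\psi\|_{L^2(\mathfrak m)}$ for all $n$.

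First I would rewrite $S_n\widetilde\varphi$ explicitly. Since $S_n\varphi(\mathbf g,x) = \log\frac{\|g_n^{-1}\cdots g_1^{-1}\cdot v\|}{\|v\|}$ and $\lp\mathfrak m,\varphi\rp = -\gamma$ by Lemma \ref{lemma:average-phi}, we get $S_n\widetilde\varphi(\mathbf g,x) = \log\frac{\|g_n^{-1}\cdots g_1^{-1}\cdot v\|}{\|v\|} + n\gamma$. The coboundary hypothesis would then say this quantity is uniformly bounded in $L^2(\mathfrak m)$. Now $\mathfrak m$ projects to $\mu^{\N^*}$ on the $G^{\N^*}$ factor with the fiber over $\mathbf g$ being the point mass at $Z(\mathbf g)$, so bounding in $L^2(\mathfrak m)$ amounts to bounding $\int_{G^{\N^*}}\big(\log\frac{\|g_n^{-1}\cdots g_1^{-1}\cdot W(\mathbf g)\|}{\|W(\mathbf g)\|} + n\gamma\big)^2\,d\mu^{\N^*}(\mathbf g)$, where $W(\mathbf g)$ is a lift of $Z(\mathbf g)$. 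The key point is that $Z(\mathbf g)$ is the \emph{attracting} direction for the sequence, so the vector $g_n^{-1}\cdots g_1^{-1}\cdot W(\mathbf g)$ — careful: one should check whether $Z$ tracks the forward or backward dynamics here — either aligns with the most contracted or most expanded direction, and in either case $\log\frac{\|g_n^{-1}\cdots g_1^{-1}\cdot W(\mathbf g)\|}{\|W(\mathbf g)\|}$ should behave like $\pm n\gamma + (\text{fluctuations})$, with genuine fluctuations of order $\sqrt n$ coming from non-elementarity (this is essentially Furstenberg's theorem that $\gamma>0$ together with the positivity of the variance, which one expects to establish independently).

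The cleanest route, avoiding circularity, is probably this: assume $\widetilde\varphi = \psi\circ F-\psi$. Apply the operator $\Lambda$ (the adjoint of $F^*$) repeatedly. Since $\Lambda F^* = \mathrm{id}$ on $L^2(\mathfrak m)$, we have $\Lambda\widetilde\varphi = \Lambda\psi\circ F$... rather, from $\widetilde\varphi = \psi\circ F - \psi$ one gets $\Lambda^n\widetilde\varphi = \Lambda^{n-1}\psi - \Lambda^n\psi$, a telescoping identity; combined with the exponential decay $\|\Lambda^n\widetilde\varphi\|_{L^2(\mathfrak m)}\lesssim\lambda^{n/2}$ proved in Proposition \ref{prop:gordin-condition}, summing gives that $\Lambda^n\psi$ converges in $L^2(\mathfrak m)$ to some limit $\psi_\infty$, with $\psi - \psi_\infty = \sum_{k\ge 1}\Lambda^{k-1}\widetilde\varphi$. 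But $\Lambda$ averages, so $\psi_\infty$ is $\Lambda$-invariant, hence constant by ergodicity; subtracting the constant we may take $\psi_\infty = \lp\mathfrak m,\psi\rp$. This pins down $\psi$ uniquely up to the additive constant as $\psi = \lp\mathfrak m,\psi\rp + \sum_{k\ge 0}\Lambda^k\widetilde\varphi$, and in particular shows $\psi$ — if it exists — must lie in the closure of functions depending only on the $\P^1$-variable and have a specific form. The contradiction should then come by evaluating: from $\psi\circ F - \psi = \widetilde\varphi$ and the explicit formula for $\widetilde\varphi$, restrict to the invariant graph $\{(\mathbf g, Z(\mathbf g))\}$ and iterate to get $\psi(T^n\mathbf g, Z(T^n\mathbf g)) - \psi(\mathbf g, Z(\mathbf g)) = \log\frac{\|\cdots\|}{\|\cdots\|} + n\gamma$; the left side is a difference of two values of an $L^2$ function along an ergodic orbit (hence $o(n)$ and with bounded second moment after averaging), while the right side has variance growing linearly unless $\gamma$ and the cocycle degenerate — and a degenerate norm cocycle forces $\mu$ to be elementary (e.g. conjugate into $\PSU(2)$ or preserving a pair of points), contradicting the hypothesis.

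The main obstacle I anticipate is making the last step rigorous without already invoking the CLT one is trying to prove: one needs an \emph{a priori} lower bound showing the norm cocycle is non-degenerate along the stationary dynamics, i.e. that $\widetilde\varphi$ genuinely fluctuates. The standard way around this is to show directly that if $\widetilde\varphi$ were a coboundary then $\log\|g_n\cdots g_1\|$ would differ from $n\gamma$ by a bounded (in $L^2$) amount, which via a Borel--Cantelli or reverse-martingale argument would force the random walk on $\P^1$ to be supported on a finite set or the matrices to have bounded norm growth fluctuation — both of which contradict non-elementarity by the appendix lemmas (existence of two loxodromic elements with disjoint fixed points). I would structure the proof to extract exactly this dichotomy, citing the non-elementarity hypothesis at the end.
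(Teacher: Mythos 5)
Your opening move is exactly the paper's: assume $\widetilde\varphi=\psi\circ F-\psi$, note that the Birkhoff sums telescope to $\psi\circ F^n-\psi$, hence are bounded by $2\|\psi\|_{L^2(\mathfrak m)}$ uniformly in $n$, and rewrite $\sum_{j=0}^{n-1}\widetilde\varphi\circ F^j$ via (\ref{eq:birkhoff}) and Lemma \ref{lemma:average-phi} as the centered cocycle $-\log\frac{\|g_n\cdots g_1\cdot v\|}{\|v\|}+n\gamma$ integrated against $\nu\otimes\mu^{\otimes n}$ (the sign/direction issue you flag is resolved in the paper by the change of variables $\mathbf g'=T^n\mathbf g$ and $T$-invariance of $\mu^{\N^*}$, using the equivariant lift $W$). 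But the proof lives or dies on the remaining step: showing that the variance of $\zeta_n=\log\frac{\|g_n\cdots g_1\cdot v\|}{\|v\|}-n\gamma$ (with $[v]$ of law $\nu$) is \emph{unbounded}, and this is precisely where your proposal has no argument. You acknowledge the danger of circularity and then offer three substitutes — ``fluctuations of order $\sqrt n$ from non-elementarity,'' a Borel--Cantelli/reverse-martingale dichotomy, and the appendix lemmas on loxodromic elements — none of which is carried out, and the appendix lemmas in particular do not by themselves rule out the cocycle being $L^2$-bounded after centering: non-degeneracy of the variance for non-elementary measures is a genuine theorem, not a formal consequence of having two loxodromic elements with disjoint fixed sets.

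The paper closes this gap with a specific external input: by \cite[V.8.5 and V.8.6]{bougerol-lacroix}, for every fixed $v\neq 0$ and every $c>0$ one has $\lim_{n\to\infty}\frac1n\sum_{k=1}^n\mathbf P_x\big(\log\frac{\|g_k\cdots g_1\cdot v\|}{\|v\|}-k\gamma<-c\big)=1$; integrating in $x$ against $\nu$ (Fubini and dominated convergence) shows the distributions of $\zeta_n$ are not tight, hence their variances cannot stay bounded, contradicting the coboundary bound. Your intermediate ``cleanest route'' via iterating $\Lambda$ and Proposition \ref{prop:gordin-condition} is a detour that again ends in the unproven assertion that ``the right side has variance growing linearly unless the cocycle degenerates.'' So the architecture of your proof is correct, but the decisive non-tightness (drift) estimate for the centered norm cocycle is missing; without citing or proving a statement of the strength of [BL85, V.8.5--V.8.6], the contradiction is not established.
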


\begin{proof}
Assume by contradiction that $\widetilde \varphi = \psi \circ F - \psi$ for some $\psi \in L^2(\mathfrak m) $. Then $\widetilde \varphi \circ F^j = \psi \circ F^{j+1} - \psi \circ F^j$ for $j \geq 0$ and 
\begin{equation} \label{eq:coboundary-sum}
\sum_{j=0}^{n-1} \widetilde \varphi \circ F^j = \psi \circ F^n - \psi.
\end{equation}

The $L^2$ norm of the right-hand side of (\ref{eq:coboundary-sum}) is bounded by $\|\psi \circ F^n\|_{L^2(\mathfrak m)} + \|\psi\|_{L^2(\mathfrak m)} = 2\|\psi\|_{L^2(\mathfrak m)}$. In particular, this quantity is bounded independently of $n$.

We will now show that the $L^2$ norm of the left-hand side of (\ref{eq:coboundary-sum}) is unbounded as $n$ goes to infinity. This contradiction will end the proof. 

From (\ref{eq:birkhoff}) we have that $$\sum_{j=0}^{n-1} \widetilde \varphi \circ F^j (\mathbf g,x) = \log \frac{\|g_n^{-1} \cdots g_1^{-1} \cdot v\|}{\|v\|} + n \gamma, \quad x = [v].$$

Let $W(\mathbf g)$ be as in the proof of Lemma \ref{lemma:average-phi}.  Then,
\begin{equation} \label{eq:sum-phi-F^j}
\begin{split}
\Big \| \sum_{j=0}^{n-1} \widetilde \varphi \circ F^j \Big \|^2_{L^2(\mathfrak m)} &= \int \left(   \log \frac{\|g_n^{-1} \cdots g_1^{-1} \cdot W(\mathbf g) \|}{\|W(\mathbf g)\|} + n \gamma \right)^2 \diff \mu^{\N^*}(\mathbf g) \\ &= \int \left(   \log \frac{\|W(T^n \mathbf g) \|}{\|g_n \cdots g_1 W(T^n \mathbf g)\|} + n \gamma \right)^2 \diff \mu^{\N^*}(\mathbf g) \\ &= \int \left( -  \log \frac{\|g_n \cdots g_1 \cdot W(\mathbf g') \|}{\|W(\mathbf g')\|} + n \gamma \right)^2 \diff \mu^{\N^*}(\mathbf g') \, \diff \mu(g_1) \cdots \diff \mu(g_n) \\ &= \int \left( -  \log \frac{\|g_n \cdots g_1 \cdot v \|}{\| v \|} + n \gamma \right)^2 \diff \nu(x)\, \diff \mu(g_1) \cdots \diff \mu(g_n).
\end{split}
\end{equation}

Let $\zeta_n$ be the random variable $\log \frac{\|g_n \cdots g_1 \cdot v\|}{\|v\|} - n\gamma$ on $G^{\N^*} \times \P^1$, where $x = [v]$ has law $\nu$ and the $g_i$ have law $\mu$. Notice that the last integral in (\ref{eq:sum-phi-F^j}) is the variance of $\zeta_n$. Hence, in order to prove that (\ref{eq:sum-phi-F^j}) is unbounded it is enough to show that the sequence of the distributions of  $\zeta_n$ is not tight (that is, not relatively compact in the space of probability measures on $\R$, see \cite{billingsley}).

It follows from \cite[V.8.5 and V.8.6]{bougerol-lacroix} that for every fixed $v \in \C^2 \setminus \{0\}$ and any $ c> 0$ we have $$\lim_{n \to \infty} \frac{1}{n} \sum_{k=1}^n \mathbf P_x \Big( \log \frac{\|g_k \cdots g_1 \cdot v\|}{\|v\|} -k \gamma < - c \Big) = 1,$$ where $\mathbf P_x$ denotes the probability with respect to $\mu^{\N^*} \otimes \delta_x$. Using Fubini's Theorem and Lebesgue's Dominated Convergence Theorem we get $$\lim_{n \to \infty} \frac{1}{n} \sum_{k=1}^n \mathbf P \Big( \log \frac{\|g_k \cdots g_1 \cdot v\|}{\|v\|} - k \gamma < - c \Big) = 1,$$ where $\mathbf P$ denotes the probability with respect to $\mu^{\N^*} \otimes \nu$. This implies that the sequence of the distributions of  $\zeta_n$ is not tight, thus finishing the proof.
\end{proof}

We will need the next proposition that shows that for most sequences $\mathbf g$ the quantities $\|g_1 \ldots g_n\|$ and $\frac{\|g_1 \cdots g_n \cdot v\|}{\|v\|}$ are comparable  for any given $v\in \C^2 \setminus \{0\}$. See  \cite[III.3.2]{bougerol-lacroix}  and  \cite[Rmk. 4.26]{benoist-quint:book}.

\begin{proposition} \label{prop:norm-comparison}
Let $\mu$ be a non-elementary probability measure on $\PSL_2(\C)$. Then for any $\varepsilon > 0$ there exists a $\delta > 0$ such that, for every non-zero $v \in \C^2$  \begin{equation}
\mu^{\N^*} \left\lbrace \mathbf g = (g_1,g_2,
\ldots) : \delta \leq \frac{\|g_n \cdots g_1 \cdot v\|}{\|g_n \cdots g_1\| \|v\|} \leq 1 \quad \text{for all } n \geq 1 \right \rbrace \geq 1 - \varepsilon.  
\end{equation}
\end{proposition}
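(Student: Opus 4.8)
The plan is to reduce the inequality to a statement about how far the orbit of $[v]$ stays from the successively most-contracted directions, and then to invoke Furstenberg's theorem for the inverse walk. The upper bound $\frac{\|g_n\cdots g_1\cdot v\|}{\|g_n\cdots g_1\|\,\|v\|}\le1$ is immediate from the definition of the operator norm, so only the lower bound is at issue. Fix $v\neq0$, set $h_n:=g_n\cdots g_1$ and $N_n(\mathbf g,v):=\frac{\|h_n\cdot v\|}{\|h_n\|\,\|v\|}\in(0,1]$; I want, given $\varepsilon>0$, a $\delta>0$ independent of $v$ with $\mu^{\N^*}\big(\inf_{n\ge1}N_n(\mathbf g,v)<\delta\big)\le\varepsilon$. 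For $h\in\SL_2(\C)$ the matrix $h^*h$ is Hermitian with eigenvalues $\|h\|^2$ and $\|h\|^{-2}$; let $e^M(h)$ be a unit eigenvector for $\|h\|^{-2}$ and $x^M(h):=[e^M(h)]\in\P^1$ the direction most contracted by $h$. Decomposing $v$ in an orthonormal eigenbasis of $h^*h$ and keeping only the $\|h\|^2$-term gives
$$\frac{\|h\cdot v\|^2}{\|h\|^2\,\|v\|^2}\ \ge\ 1-\frac{|\langle v,e^M(h)\rangle|^2}{\|v\|^2}\ =\ \dist\big([v],x^M(h)\big)^2,$$
where $\dist$ is the chordal metric on $\P^1$, comparable to the standard one. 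Hence $N_n(\mathbf g,v)\gtrsim\dist\big([v],x^M(h_n)\big)$, and it suffices to bound the probability that $x^M(h_n)$ is $\delta$-close to $[v]$ for some $n$.

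I would next establish the almost sure statement, for fixed $v$, that $\inf_nN_n(\mathbf g,v)>0$. Since $N_n>0$ for every $n$, it is enough to show $\liminf_nN_n(\mathbf g,v)>0$ a.s., and by the inequality above this follows once $x^M(h_n)$ converges a.s.\ to a point $\neq[v]$. The key is that $x^M(h_n)$ is the attracting (image) direction of $B_n:=h_n^{-1}=g_1^{-1}\cdots g_n^{-1}$, because $B_nB_n^*=(h_n^*h_n)^{-1}$, whose largest eigenspace is the smallest eigenspace of $h_n^*h_n$. Now $B_n$ is a product of the i.i.d.\ matrices $g_i^{-1}$, each of law $\check\mu:=(g\mapsto g^{-1})_*\mu$, the new factor being appended on the right as $n$ grows, which matches the setting of Proposition~\ref{prop:furstenberg}; moreover $\check\mu$ is non-elementary (it contains two loxodromic elements with disjoint fixed-point sets whenever $\mu$ does). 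That proposition gives $(B_n)_*\check\nu\to\delta_{\check Z(\mathbf g)}$ a.s., where $\check\nu$ is the $\check\mu$-stationary measure and $\check Z$ has distribution $\check\nu$; the standard contraction properties of such products (see \cite[III.3]{bougerol-lacroix}) then give $x^M(h_n)\to\check Z(\mathbf g)$ a.s.\ along the whole sequence. As $\check\nu$ is non-atomic (a non-elementary measure has a non-atomic stationary measure, see \cite{bougerol-lacroix}), $\check Z(\mathbf g)\neq[v]$ a.s., so $\liminf_nN_n(\mathbf g,v)\ge c\,\dist\big([v],\check Z(\mathbf g)\big)>0$ a.s.

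To pass from this to a bound uniform in $v$ I would use a compactness argument. First note that $v\mapsto N_n(\mathbf g,v)$ is Lipschitz on $\P^1$ with a constant independent of $n$ and $\mathbf g$: for unit $v,v'$ one has $|N_n(\mathbf g,v)-N_n(\mathbf g,v')|\le\|h_n(v-v')\|/\|h_n\|\le\|v-v'\|$, so $v\mapsto\inf_nN_n(\mathbf g,v)$ is continuous. Put $\bar g(v,\delta):=\mu^{\N^*}\big(\inf_nN_n(\cdot,v)\le\delta\big)$. For each $v$ the events $\{\inf_nN_n(\cdot,v)\le\delta\}$ decrease, as $\delta\downarrow0$, to $\{\inf_nN_n(\cdot,v)=0\}$, a null set by the previous paragraph, so $\bar g(v,\delta)\downarrow0$. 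Since $t\mapsto\mathbf 1_{\{t\le\delta\}}$ is upper semicontinuous and $v\mapsto\inf_nN_n(\mathbf g,v)$ is continuous, $v\mapsto\mathbf 1\big[\inf_nN_n(\mathbf g,v)\le\delta\big]$ is upper semicontinuous, and integrating in $\mathbf g$ (reverse Fatou) shows that $v\mapsto\bar g(v,\delta)$ is upper semicontinuous on the compact space $\P^1$. A decreasing family of upper semicontinuous functions on a compact space that tends pointwise to $0$ tends to $0$ uniformly (a Dini-type argument via nested closed superlevel sets); hence there is $\delta>0$ with $\sup_v\bar g(v,\delta)<\varepsilon$, and then $\mu^{\N^*}\big(\inf_nN_n(\cdot,v)<\delta\big)\le\bar g(v,\delta)<\varepsilon$ for every $v$. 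Combined with the first paragraph this is the assertion.

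I expect the heart of the matter to be the middle step: identifying $x^M(h_n)$ with a convergent boundary limit of the inverse walk whose law $\check\nu$ is non-atomic. This is precisely where non-elementariness is indispensable — for $\mu=\delta_A$ with $A$ loxodromic and $[v]$ its most contracted direction one has $N_n\to0$ — and it requires the convergence $x^M(h_n)\to\check Z(\mathbf g)$ to hold along the full sequence rather than merely in distribution. The two auxiliary inputs, the $n$-uniform Lipschitz bound in $v$ and the semicontinuity/Dini step, are what upgrade the almost sure statement into one holding simultaneously for all $n\ge1$ and all $v$.
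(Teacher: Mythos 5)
Your argument is essentially correct, and it is worth pointing out that the paper itself does not prove Proposition \ref{prop:norm-comparison}: it is quoted from the literature, with references to \cite[III.3.2]{bougerol-lacroix} and \cite[Rmk. 4.26]{benoist-quint:book}. What you wrote is in effect a reconstruction of the classical argument behind those citations: the singular-value estimate $\|h_n v\|/(\|h_n\|\,\|v\|)\geq \dist([v],x^M(h_n))$, the identification of the most contracted direction $x^M(h_n)$ of $h_n=g_n\cdots g_1$ with the attracting image direction of the right-product walk $h_n^{-1}=g_1^{-1}\cdots g_n^{-1}$ driven by $\check\mu=(g\mapsto g^{-1})_*\mu$ (which is non-elementary exactly when $\mu$ is), Furstenberg's boundary convergence (Proposition \ref{prop:furstenberg} applied to $\check\mu$) together with non-atomicity of $\check\nu$, and then the upgrade from an almost sure statement for fixed $v$ to a bound uniform in $v$ via the uniform Lipschitz estimate and the semicontinuity/Dini compactness argument. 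That last uniformization step is the part usually left implicit in textbook statements, and you handle it correctly.

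The only place where you lean on an external reference is the implication ``$(h_n^{-1})_*\check\nu\to\delta_{\check Z}$ a.s. implies $x^M(h_n)\to\check Z$ a.s.''; if you want the write-up self-contained, two remarks close it. First, the Dirac limit together with non-atomicity of $\check\nu$ already forces $\|h_n\|=\|h_n^{-1}\|\to\infty$ a.s.: along a bounded subsequence one could extract $h_{n_k}^{-1}\to B$ in $\PSL_2(\C)$, and $B_*\check\nu=\delta_{\check Z}$ is impossible for a homeomorphism and a non-atomic measure. Second, writing $h_n^{-1}=K_nA_nL_n$ in Cartan form, $h_n^{-1}$ maps the complement of any $\epsilon$-neighbourhood of its most contracted source direction into a ball of radius $O(\|h_n\|^{-2}\epsilon^{-1})$ around $x^M(h_n)$; since a non-atomic $\check\nu$ gives small mass to all small balls uniformly, the concentration of $(h_n^{-1})_*\check\nu$ at $\check Z$ then pins $x^M(h_n)$ to $\check Z$ along the full sequence. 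With these two observations your proof is complete, and no moment hypothesis is used anywhere, consistent with the statement. Given that the paper simply cites the result, your level of detail is more than adequate.
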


\begin{proof}[Proof of Theorem \ref{thm:CLT}]
Consider the dynamical system $F: X \to X$, the measure $\mathfrak m$ on $X$ and the function $\widetilde \varphi$ introduced above. By Proposition \ref{prop:gordin-condition}, Lemmas \ref{lemma:average-phi} and \ref{lemma:not-coboundary} we can apply Gordin-Liverani's Theorem to $\widetilde \varphi$ and $F$. This gives that the sequence of random variables $Z_n = \frac{1}{\sqrt n} \sum_{j=0}^{n-1} \widetilde \varphi \circ F^j$ converges in distribution to a Gaussian random variable of mean zero and variance $\sigma > 0$.

Let $Y_n:=\frac{1}{\sqrt n} (\log \frac{\|g_n \cdots g_1 \cdot v\|}{\|v\|} - n\gamma)$ be random variables on $G^{\N^*} \times \P^1$, where $x = [v]$ has law $\nu$ and the $g_i$ have law $\mu$. We claim that $Z_n$ and $-Y_n$ have the same distribution. Since the Gaussian law is symmetric around the origin, the convergence of $Z_n$ to the normal distribution will give the convergence of $Y_n$ to the same distribution. In order to do so, we compare the characteristic functions $\chi_{Z_n}(t)$ and $\chi_{Y_n}(t)$ of $Z_n$ and $Y_n$.

From (\ref{eq:birkhoff}) and Lemma \ref{lemma:average-phi} we have  $Z_n = \frac{1}{\sqrt n }(\log \frac{\|g_n^{-1} \cdots g_1^{-1} \cdot v\|}{\|v\|} + n \gamma)$.   Then
\begin{equation*}
\begin{split}
\chi_{Z_n}(t) &= \mathbf E(e^{it Z_n}) = \int e^{ \frac{it}{\sqrt n }(\log \frac{\|g_n^{-1} \cdots g_1^{-1} \cdot v\|}{\|v\|} + n \gamma)} \diff \mathfrak m (\mathbf g,x) \\ &=  \int e^{ \frac{it}{\sqrt n }(\log \frac{\|g_n^{-1} \cdots g_1^{-1} \cdot W(\mathbf g)\|}{\|W(\mathbf g)\|} + n \gamma)} \diff \mu^{\N^*} (\mathbf g)  \\ &= \int e^{\frac{it}{\sqrt n}(-\log \frac{\|g_n \cdots g_1 \cdot v \|}{\| v \|} + n \gamma)} \diff \nu(x)\, \diff \mu(g_1) \cdots \diff \mu(g_n),
\end{split}
\end{equation*}
where in the last step we used the same argument as in (\ref{eq:sum-phi-F^j}).

On the other hand
\begin{equation*}
\begin{split}
\chi_{-Y_n}(t) &= \mathbf E(e^{-it Y_n}) = \int e^{-it Y_n}\diff \nu(x)\, \diff \mu(g_1) \cdots \diff \mu(g_n) \\ &= \int e^{\frac{it}{\sqrt n }(-\log \frac{\|g_n \cdots g_1 \cdot v\|}{\|v\|} + n \gamma)} \diff \nu(x)\, \diff \mu(g_1) \cdots \diff \mu(g_n) = \chi_{Z_n}(t).
\end{split}
\end{equation*}

As the characteristic function of a random variable determines its distribution, we conclude that $Z_n$ and $-Y_n$ have the same distribution.

By the above remarks, the sequence of random variables $Y_n$ on $G^{\N^*} \times \P^1$ converges in law to $\mathcal N(0;\sigma^2)$. From Proposition \ref{prop:norm-comparison} we conclude that for any nonzero $v \in \C^2$ the sequence of random variables $Y_n^ v := \frac{1}{\sqrt n} \big (\log \frac{\|g_n \cdots g_1 \cdot v\|}{\|v\|} - n \gamma\big)$ on $G^{\N^*}$ converges in law to $\mathcal N(0;\sigma^2)$. The proof is now complete.
\end{proof}

\section{Regularity of the stationary measure} \label{sec:regularity}

We now study the regularity of stationary measures. Throughout this section $\mu$ will be a non-elementary probability measure on $G = \PSL_2(\C)$ such that $\int_G \log\|g\|\,\diff \mu(g) <+\infty$ and $\nu$ will denote the unique $\mu$-stationary measure. We will also replace $\mu$ by $\mu^{*N}$ for some $N\geq 1$ when necessary and assume that the norm of $f_\mu^*$ acting on $L^2_{(1,0)}$ is strictly less than one (cf. Propoistion \ref{prop:normlessthan1}). Notice that $\mu$ and $\mu^{*N}$ have the same stationary measure.

As we will see, the regularity of $\nu$ will depend on the moments of $\mu$. We'll need the following notion.

\begin{definition} \label{def:moments}
Let $\chi: [0,+\infty) \to [0,+\infty)$ be a non-negative function. The $\chi$-moment of a probability measure $\mu$ on $G = \PSL_2(\C)$ is the number
$$\int_G \chi(\log\|g\|) \, \diff \mu(g) \in \R_{\geq 0} \cup \{+\infty \}.$$
If the above integral is finite, we say that $\mu$ satisfies the $\chi$-moment condition, or equivalently, that $\mu$ has a finite $\chi$-moment.

In particular, if $\chi(s)=s^p$ (resp. $\chi(s)=e^{ps}$) for some $p>0$ we say that $\mu$ satisfies the $p^{th}$-moment condition (resp. an exponential moment condition).
\end{definition}

Recall that we are assuming that $\mu$ has a finite first moment. In particular, if $\chi(s) \lesssim s$ for $s$ large, then $\mu$ satisfies the $\chi$-moment condition. Hence, we'll often assume $\chi(s) \gtrsim s$ for $s$ large. It is also natural to consider $\chi$ convex and increasing. In that case, it follows from the sub-additivity of $\log\|g\|$ that if $\mu$ has a finite $\chi$-moment then $\mu^{*n}$ has a finite $\chi_n$-moment, where $\chi_n(s) := \chi(\frac1n s)$ for $n \geq 1$. In particular, if $\mu$ has a finite $p^{th}$ moment or a finite exponential moment then the same is true for $\mu^{*n}$. 

\vskip5pt

We now introduce a notion of regularity for probability measures following the theory of super-potentials, \cite{dinh-sibony:acta}.

Consider the unit ball in $W^{1,2}$
$$\B:=\big\{\varphi\in W^{1,2} : \|\varphi\|_{W^{1,2}}\leq 1\big\}.$$
 
Let $\| \cdot \|$ be an auxiliary norm on $W^{1,2}$ and denote by $\dist$ the distance induced by $\| \cdot \|$. We will be interested in norms that are weaker than $\| \cdot \|_{W^{1,2}}$. 

\begin{definition}
Let $m$ be a probability measure on $\P^1$. We say that $m$ has a  H\"older continuous super-potential with respect to $W^{1,2}$ and $\dist$ if the restriction of $m$ to 
$\B$ is a H\"older continuous function with respect to $\dist$. 
\end{definition}

The functional on $W^{1,2}$ defined by $m$ is a kind of superpotential of $m$ (compare with \cite{dinh-sibony:acta}). Notice that the above notion doesn't change if we replace $\B$ by any bounded open subset of $W^{1,2}$. In particular, we can replace $\B$ by the unit ball of $W^{1,2}$ with respect to any norm on $W^{1,2}$ that is equivalent to $\|\cdot\|_{W^{1,2}}$.

It will be convenient to work with the following norm and corresponding ball:
$$\|\varphi\|_\nu:=|\langle \nu,\varphi\rangle|+ \|\partial\varphi\|_{L^2} \quad \text{and} \quad 
\B_\nu:=\big\{\varphi\in W^{1,2}: \|\varphi\|_\nu\leq 1\big\}.$$

It follows from Corollary \ref{cor:nu-norm} that $\|\cdot\|_\nu$ is equivalent to $\|\cdot\|_{W^{1,2}}$. For later use, define also
$$\B^0_\nu:=\big\{\varphi\in \B_\nu : \langle \nu,\varphi\rangle =0\big\}$$
and $$\Lambda:= f_\mu^*: W^{1,2} \longrightarrow W^{1,2}.$$

Since $\nu$ is stationary, $\B_\nu$ and $\B_\nu^0$ are invariant by $\Lambda$. Moreover, $\frac12 (\Lambda-\id)$ maps $\B_\nu$ to $\B_\nu^0$ and by Proposition \ref{prop:normlessthan1} there is a constant $\delta>1$ such that $\B_\nu^0$ is invariant by
$\widetilde\Lambda:=\delta \Lambda$.

\vskip5pt

Denote by $\DD(a,r)$ the disc of center $a$ and of radius $r$ in $\P^1$.  Fix $0<\epsilon_0 < \frac12$. For $0<\epsilon< \epsilon_0$, $0<r\leq 1$ and $a\in\P^1$, set 
$$u^\epsilon_{a,r}(z):=\max\Big(-\log \frac{\dist_{\P^1}(z,a)}{2r},0\Big)^{\frac12 -\epsilon}.$$

Then $u^\epsilon_{a,r}$ belongs to $W^{1,2}$ and it is supported by $\DD(a,2r)$. One can also check that the $u^\epsilon_{a,r}$ belong to a bounded subset of $W^{1,2}$, see \cite[Ex. 2]{vigny:dirichlet}. 

Define 
$$\mathcal V_\epsilon(r):=\max_{a\in\P^1} \|u^\epsilon_{a,r}\| < + \infty.$$

\begin{proposition} \label{p:size-of-disc}
Assume that $m$ has a H\"older continuous super-potential with respect to $W^{1,2}$ and $\dist$. Then there are constants $c>0$ and $\alpha>0$ independent of $\epsilon$ such that for $0<r\leq 1$
$$m(\DD(a,r)) \leq c \, \mathcal V_\epsilon(r)^\alpha.$$
\end{proposition}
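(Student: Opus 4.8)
The plan is to exploit the characteristic function $u^\epsilon_{a,r}$ as a test function against $m$, using the fact that it dominates the indicator of the disc $\DD(a,r)$. Indeed, on $\DD(a,r)$ we have $\dist_{\P^1}(z,a) \leq r$, so $-\log \frac{\dist_{\P^1}(z,a)}{2r} \geq \log 2 > 0$, hence $u^\epsilon_{a,r}(z) \geq (\log 2)^{\frac12 - \epsilon}$ there. Since $u^\epsilon_{a,r} \geq 0$ everywhere and $m$ is a positive measure, this gives
$$m(\DD(a,r)) \leq (\log 2)^{-(\frac12 - \epsilon)} \langle m, u^\epsilon_{a,r} \rangle \leq 2^{\frac12}\, \langle m, u^\epsilon_{a,r} \rangle$$
for $\epsilon < \epsilon_0 < \frac12$, where the constant is uniform in $\epsilon$.

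Next I would use the H\"older continuity of the super-potential of $m$ together with the fact that $u^\epsilon_{a,r}$ is supported on $\DD(a,2r)$ and has a controlled $\|\cdot\|_{W^{1,2}}$-norm. The key point is that we also have $\langle m, \mathbf 1_{\DD(a,2r)} \cdot 0 \rangle = 0$; more precisely, we should compare $u^\epsilon_{a,r}$ with the zero function, which also lies in $\B$ (after rescaling). Writing $\phi := u^\epsilon_{a,r} / \|u^\epsilon_{a,r}\|_{W^{1,2}}$, both $\phi$ and $0$ lie in $\B$, and the H\"older continuity of $m|_\B$ with respect to $\dist$ gives
$$|\langle m, \phi \rangle - \langle m, 0 \rangle| \leq C \dist(\phi, 0)^{\alpha_0} = C \|\phi\|^{\alpha_0}$$
for some constants $C, \alpha_0 > 0$. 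Hence $|\langle m, u^\epsilon_{a,r} \rangle| \leq C \|u^\epsilon_{a,r}\|_{W^{1,2}} \cdot \|\phi\|^{\alpha_0}$. Since the $u^\epsilon_{a,r}$ lie in a bounded subset of $W^{1,2}$ (as recalled in the excerpt, citing Vigny), $\|u^\epsilon_{a,r}\|_{W^{1,2}}$ is bounded above by a constant independent of $\epsilon$, $a$, $r$. On the other hand $\|\phi\| = \|u^\epsilon_{a,r}\| / \|u^\epsilon_{a,r}\|_{W^{1,2}} \lesssim \|u^\epsilon_{a,r}\| \leq \mathcal V_\epsilon(r)$, again using the lower bound on $\|u^\epsilon_{a,r}\|_{W^{1,2}}$ — actually we only need the upper bound on the numerator, so $\|\phi\| \lesssim \mathcal V_\epsilon(r)$. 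Combining, $m(\DD(a,r)) \lesssim \langle m, u^\epsilon_{a,r}\rangle \lesssim \mathcal V_\epsilon(r)^{\alpha_0}$, which is the claim with $\alpha = \alpha_0$.

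One subtle point to handle carefully is the uniformity of all constants in $\epsilon$: the H\"older constant $C$ and exponent $\alpha_0$ of the super-potential are fixed by hypothesis and do not depend on $\epsilon$; the bound on $\|u^\epsilon_{a,r}\|_{W^{1,2}}$ is uniform in $\epsilon < \epsilon_0$ by the cited fact that these functions form a bounded family; and the elementary constant $(\log 2)^{-(\frac12-\epsilon)}$ is bounded by $2^{1/2}$ for $\epsilon < \frac12$. So every constant can be taken independent of $\epsilon$, which is what the statement requires. I expect the main (minor) obstacle to be the bookkeeping ensuring that the definition of "H\"older continuous super-potential with respect to $W^{1,2}$ and $\dist$" is applied correctly: the H\"older estimate is on the restriction $m|_\B$, so one must normalize $u^\epsilon_{a,r}$ to land in $\B$ and then undo the normalization, being careful that $\dist$ is the weaker distance and hence $\dist(\phi,0) = \|\phi - 0\| = \|\phi\|$ may indeed be small even when $\|\phi\|_{W^{1,2}}$ is of order one — this is precisely why $\mathcal V_\epsilon(r)$, computed in the weaker norm, controls the estimate. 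No serious difficulty is anticipated beyond this.
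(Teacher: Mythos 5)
Your overall route is the same as the paper's: dominate the indicator of $\DD(a,r)$ by a constant times $u^\epsilon_{a,r}$ (your uniform-in-$\epsilon$ constant is fine), then convert the H\"older continuity of the super-potential into the bound $|\langle m,u^\epsilon_{a,r}\rangle|\lesssim \mathcal V_\epsilon(r)^{\alpha}$. The one place where you deviate from the paper, the normalization $\phi:=u^\epsilon_{a,r}/\|u^\epsilon_{a,r}\|_{W^{1,2}}$, is where a real gap appears. You assert $\|\phi\|=\|u^\epsilon_{a,r}\|/\|u^\epsilon_{a,r}\|_{W^{1,2}}\lesssim\|u^\epsilon_{a,r}\|\leq\mathcal V_\epsilon(r)$ and then explicitly claim that "we only need the upper bound on the numerator". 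That is false as stated: this inequality is precisely a uniform \emph{lower} bound $\|u^\epsilon_{a,r}\|_{W^{1,2}}\geq c_0>0$ (in $a$, $r$, $\epsilon$), which neither your argument nor the paper provides (the paper only records that the family is bounded \emph{above} in $W^{1,2}$). If the denominator were allowed to be small, $\|\phi\|$ could remain of order $1$ while $\mathcal V_\epsilon(r)\to 0$, and your chain of inequalities would not yield the stated rate.

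The repair is easy, in two possible ways. First, keep your normalization but do the bookkeeping without discarding the denominator: $|\langle m,u^\epsilon_{a,r}\rangle|\leq C\,\|u^\epsilon_{a,r}\|_{W^{1,2}}\,\|\phi\|^{\alpha_0}=C\,\|u^\epsilon_{a,r}\|_{W^{1,2}}^{1-\alpha_0}\,\|u^\epsilon_{a,r}\|^{\alpha_0}$; since one may assume $\alpha_0\leq 1$ (the ball $\B$ has finite diameter for $\dist$ because $\|\cdot\|\lesssim\|\cdot\|_{W^{1,2}}$, so the H\"older exponent can be lowered) and $\|u^\epsilon_{a,r}\|_{W^{1,2}}$ is bounded above uniformly, this gives $|\langle m,u^\epsilon_{a,r}\rangle|\lesssim\|u^\epsilon_{a,r}\|^{\alpha_0}\leq\mathcal V_\epsilon(r)^{\alpha_0}$, using only the upper bound — but on the factor you must retain, not via the inequality $\|\phi\|\lesssim\|u^\epsilon_{a,r}\|$. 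Second, and this is what the paper does, skip the normalization entirely: by the remark following the definition, the notion of H\"older continuous super-potential is unchanged if $\B$ is replaced by any bounded subset of $W^{1,2}$, in particular by one containing $0$ and all the $u^\epsilon_{a,r}$; applying the H\"older estimate there directly gives $m(\DD(a,r))\lesssim|m(u^\epsilon_{a,r})-m(0)|\leq c\,\|u^\epsilon_{a,r}\|^{\alpha}\leq c\,\mathcal V_\epsilon(r)^{\alpha}$, with all constants independent of $\epsilon$, $a$ and $r$.
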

\begin{proof}
Notice that $u^\epsilon_{a,r} \geq d$ on $\DD(a,r)$ where $d:= \sqrt{\log 2}$. Since $m$ has a H\"older continuous super-potential, we have 
$$m(\DD(a,r)) \leq d^{-1} |\langle m, u^\epsilon_{a,r}\rangle| = d^{-1} |m(u^\epsilon_{a,r}) -m(0)| \leq c \|u^\epsilon_{a,r}-0\|^\alpha  \leq c\,  \mathcal V_\epsilon(r)^\alpha,$$ for some positive constants $c$ and $\alpha$. This ends the proof.
\end{proof} 

\begin{proposition} \label{p:sp}
Let $\mu$ be a non-elementary probability measure on $G=\PSL_2(\C)$ having a finite first moment. Assume that $\Lambda = f^*_\mu: W^{1,2}\to W^{1,2}$ is bounded with respect to the norm $\|\cdot\|$ with $\|\cdot\|_{L^1}\leq c \|\cdot\|$ and $\|\cdot\| \leq c \|\cdot\|_{W^{1,2}}$ for some constant $c>0$.
Then $\nu$ has a H\"older continuous super-potential with respect to $W^{1,2}$ and the distance $\dist$. 
\end{proposition}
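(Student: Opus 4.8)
The plan is to show that $\nu$, viewed via Theorem~\ref{thm:exponential-convergence}(2) as a continuous linear functional on $W^{1,2}$, is H\"older continuous on the unit ball $\B$ for the distance $\dist$. By linearity this amounts to finding constants $C>0$ and $0<\alpha\le1$ with $|\langle\nu,\phi\rangle|\le C\|\phi\|^{\alpha}$ for every $\phi\in W^{1,2}$ satisfying $\|\phi\|_{W^{1,2}}\le 2$ (apply this to $\phi=\varphi_1-\varphi_2$ with $\varphi_1,\varphi_2\in\B$, using that $\langle\nu,\varphi_1\rangle-\langle\nu,\varphi_2\rangle=\langle\nu,\varphi_1-\varphi_2\rangle$ and $\dist(\varphi_1,\varphi_2)=\|\varphi_1-\varphi_2\|$). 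The two ingredients I would play against each other are: the spectral gap of $\Lambda=f_\mu^*$ on $W^{1,2}$ from Theorem~\ref{thm:exponential-convergence}, which controls the iterate $\Lambda^n\phi$ in the \emph{strong} norm $\|\cdot\|_{W^{1,2}}$; and the hypotheses on $\|\cdot\|$, namely boundedness of $\Lambda$ together with $\|\cdot\|_{L^1}\le c\|\cdot\|\le c\|\cdot\|_{W^{1,2}}$, which control $\Lambda^n\phi$ in the \emph{weak} norm $\|\cdot\|$.

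Concretely, fix such a $\phi$ and an integer $n\ge1$. By Theorem~\ref{thm:exponential-convergence}(1) one has $\|\Lambda^n\phi-\langle\nu,\phi\rangle\|_{W^{1,2}}\le A\|\partial\phi\|_{L^2}\lambda^n\le 2A\lambda^n$ for fixed $A>0$ and $0<\lambda<1$. Since $\omegaFS$ has total mass $1$ on $\P^1$ and $\|\cdot\|_{L^1}\lesssim\|\cdot\|_{W^{1,2}}$, integrating this against $\omegaFS$ gives $\bigl|\langle\nu,\phi\rangle-\int_{\P^1}\Lambda^n\phi\,\omegaFS\bigr|\lesssim\lambda^n$. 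On the other hand, writing $L:=\|\Lambda\|$ for the operator norm of $\Lambda$ with respect to $\|\cdot\|$ (finite by hypothesis, and $\ge1$ since $\Lambda\mathbf 1=\mathbf 1$), we have $\bigl|\int_{\P^1}\Lambda^n\phi\,\omegaFS\bigr|\le\|\Lambda^n\phi\|_{L^1}\le c\|\Lambda^n\phi\|\le cL^n\|\phi\|$. Combining the two estimates,
\[
|\langle\nu,\phi\rangle|\ \lesssim\ \lambda^n+L^n\|\phi\|\qquad\text{for every }n\ge1 .
\]

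The last step is to optimize in $n$. If $L=1$, letting $n\to\infty$ already yields $|\langle\nu,\phi\rangle|\lesssim\|\phi\|$, so $\nu$ is even Lipschitz. If $L>1$, then for $\|\phi\|\le\lambda/L$ I would take $n$ of order $\log(1/\|\phi\|)/\log(L/\lambda)$, which balances the two terms and makes both $\lesssim\|\phi\|^{\alpha}$ with $\alpha=\log(1/\lambda)/\log(L/\lambda)\in(0,1)$; for $\lambda/L<\|\phi\|$ while $\|\phi\|_{W^{1,2}}\le2$ one uses instead the crude bound $|\langle\nu,\phi\rangle|\le 2A'$ from Theorem~\ref{thm:exponential-convergence}(2). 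After adjusting $C$ this gives the H\"older estimate on $\B$, hence the proposition. I do not expect a real obstacle: the only point to keep an eye on is that the exponent coming out of the optimization is strictly positive, which is guaranteed because $\lambda<1$ and $L<\infty$. The conceptual content sits entirely in the two preliminary inequalities — the spectral gap of $\Lambda$ on $W^{1,2}$ and the comparison $\|\cdot\|_{L^1}\le c\|\cdot\|\le c\|\cdot\|_{W^{1,2}}$ — which are precisely the hypotheses being assumed.
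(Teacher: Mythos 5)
Your proposal is correct, and it reaches the conclusion by a more direct route than the paper. The paper's proof applies the abstract interpolation result (Lemma \ref{l:Holder}) to $K=\B_\nu$, the maps $F_n=\widetilde\Lambda^n\circ\frac12(\Lambda-\id)$ (with $\widetilde\Lambda=\delta\Lambda$ preserving $\B_\nu^0$, which rests on Proposition \ref{prop:normlessthan1}) and $\vartheta=\omegaFS$; the telescoping identity $2\,\delta^{-n}\,\vartheta\circ F_n(\varphi)=\langle (f_\mu^{n+1})_*\omegaFS-(f_\mu^n)_*\omegaFS,\varphi\rangle$ exhibits $\nu-\omegaFS$ as a geometrically weighted series of functions that are Lipschitz for $\dist$ with exponentially growing constants, and the lemma converts this into H\"older continuity on $\B_\nu$. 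You instead use the linearity of $\nu$ to reduce the statement to the single bound $|\langle\nu,\phi\rangle|\lesssim\|\phi\|^\alpha$ for $\|\phi\|_{W^{1,2}}\le 2$, and prove it by playing the spectral-gap estimate of Theorem \ref{thm:exponential-convergence}(1) (decay $\lambda^n$ in $W^{1,2}$, hence in $L^1$, of $\Lambda^n\phi-\langle\nu,\phi\rangle$) against the growth $L^n\|\phi\|$ allowed by the boundedness of $\Lambda$ for $\|\cdot\|$ together with $\|\cdot\|_{L^1}\le c\|\cdot\|$, then optimizing over $n$. This is precisely the mechanism inside Lemma \ref{l:Holder} (Lemma 1.19 of \cite{dinh-sibony:cime}), carried out by hand: what it buys is that you bypass the auxiliary structure $\B_\nu$, $\B^0_\nu$, $\widetilde\Lambda$ and the telescoping series, at the price of using linearity of the functional, so your argument is specific to (linear) super-potentials of measures, which is all that is needed here, whereas the paper's route is the one that generalizes to nonlinear situations. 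The details you flag are handled correctly: $L\ge 1$ because $\Lambda\mathbf 1=\mathbf 1$, the integer rounding in the choice of $n$ only affects constants, the regime $\|\phi\|>\lambda/L$ is absorbed using Theorem \ref{thm:exponential-convergence}(2), the case $L=1$ gives Lipschitz continuity, and $\lambda<1$, $L<\infty$ guarantee $\alpha=\log(1/\lambda)/\log(L/\lambda)\in(0,1]$.
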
 

We will need the following lemma.

\begin{lemma} \label{l:Holder}
Let $K$ be a metric space. Let $A\geq 1$ be a constant and let $F_n:K\to K$ be a sequence of Lipschitz maps on $K$ such that $\|F_n\|_{\Lip}\leq A^n$ for every $n$. Then for any bounded H\"older continuous function $\vartheta:K\to \C$ and any $ \delta > 1$, the function 
$$\sum_{n\geq 0} \delta^{-n} \, (\vartheta \circ F_n)$$
is also H\"older continuous. If furthermore $K$ has finite diameter, then the assumption on the boundedness of $\vartheta$ is superfluous.  
\end{lemma}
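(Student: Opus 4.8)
The plan is to prove that $\Theta := \sum_{n \geq 0} \delta^{-n}\,(\vartheta \circ F_n)$ is H\"older continuous by splitting the sum into a ``low-frequency'' part and a ``high-frequency'' tail, with the cutoff depending on how close the two points are. First I would fix $x, y \in K$ and write $d := \dist(x,y)$, which we may assume to be small (say $d \leq 1$), since for $d$ bounded below the estimate is trivial from the boundedness of $\vartheta$ (and in the finite-diameter case, boundedness of $\vartheta$ follows automatically from its H\"older continuity). Let $\vartheta$ be $(M,\gamma)$-H\"older, i.e.\ $|\vartheta(u) - \vartheta(v)| \leq M \dist(u,v)^\gamma$, and let $\|\vartheta\|_\infty \leq M_0$. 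For each $n$, since $F_n$ is $A^n$-Lipschitz, we get
\begin{equation*}
|\vartheta(F_n(x)) - \vartheta(F_n(y))| \leq M\, (A^n d)^\gamma = M A^{\gamma n} d^\gamma,
\end{equation*}
while trivially $|\vartheta(F_n(x)) - \vartheta(F_n(y))| \leq 2 M_0$.

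Next I would choose an integer cutoff $N = N(d)$ and estimate
\begin{equation*}
|\Theta(x) - \Theta(y)| \leq \sum_{n=0}^{N} \delta^{-n} M A^{\gamma n} d^\gamma + \sum_{n > N} \delta^{-n} \cdot 2 M_0.
\end{equation*}
The second (tail) sum is $\leq 2 M_0 \,\delta^{-N}/(1 - \delta^{-1})$, which decays geometrically in $N$. For the first sum, if $A^\gamma < \delta$ it is bounded by $M d^\gamma / (1 - A^\gamma \delta^{-1})$ and we are already done with exponent $\gamma$; the interesting case is $A^\gamma \geq \delta$, where the first sum is $\lesssim M d^\gamma (A^\gamma/\delta)^{N}$. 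Now I would pick $N \approx \theta \log(1/d)$ for a suitable small constant $\theta > 0$ so that $d^\gamma (A^\gamma/\delta)^{N} \leq d^{\gamma'}$ and simultaneously $\delta^{-N} \leq d^{\gamma'}$ for some new exponent $0 < \gamma' < \gamma$. Concretely, taking $N = \lfloor \theta \log(1/d) \rfloor$ with $\theta = \gamma/(2\log(A^\gamma/\delta))$ makes the first sum $\lesssim M d^{\gamma/2}$, and the tail becomes $\lesssim M_0 \, d^{\,\theta \log \delta}$; then $\gamma' := \min(\gamma/2, \theta \log \delta) > 0$ works, giving $|\Theta(x) - \Theta(y)| \lesssim (M + M_0)\, d^{\gamma'}$.

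I do not expect any genuine obstacle here — the argument is the standard telescoping/dyadic-cutoff trick for showing that geometric series of rescaled H\"older functions remain H\"older, and all the series involved converge because $\delta > 1$. The only points requiring a little care are: handling the dichotomy $A^\gamma < \delta$ versus $A^\gamma \geq \delta$ cleanly (in the former case no cutoff is needed); making sure the final exponent $\gamma'$ is explicitly positive and independent of $x,y$; and, in the case where $K$ does not have finite diameter, explicitly invoking the hypothesis $\|\vartheta\|_\infty < \infty$ to control the tail (whereas when $\operatorname{diam} K < \infty$, H\"older continuity of $\vartheta$ gives $\|\vartheta\|_\infty \leq |\vartheta(u_0)| + M\,(\operatorname{diam} K)^\gamma$ for any base point $u_0$, so boundedness is free). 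I would also remark that the same proof shows $\Theta$ is itself well-defined and continuous, and that its H\"older constant depends only on $A$, $\delta$, $\gamma$, $M$ and $M_0$ (or $\operatorname{diam} K$), which is the form in which the lemma will be applied later.
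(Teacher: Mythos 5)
Your argument is correct and is essentially the standard proof: the paper itself does not write it out, but simply cites Lemma 1.19 of Dinh--Sibony \cite{dinh-sibony:cime} (stated there for $F_n=F^n$) and notes that the same proof extends, and that proof is exactly your cutoff argument with $N\approx\theta\log(1/d)$ balancing the Lipschitz-controlled head against the geometrically small tail. One tiny point to clean up: in the boundary case $A^\gamma=\delta$ your head estimate should read $\lesssim M(N+1)d^\gamma$ and your choice $\theta=\gamma/(2\log(A^\gamma/\delta))$ is undefined; this is trivially fixed by replacing $A$ with $2A$ (still a valid bound on $\|F_n\|_{\Lip}$) or by absorbing the extra factor $N+1\lesssim\log(1/d)$ into a slightly smaller exponent $\gamma'$.
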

\begin{proof}
In the particular case where $F_n = F^n$ for some Lipschitz map $F$ this is Lemma 1.19 in \cite{dinh-sibony:cime}. It can be easily checked that the proof given there extends to the present setting.
\end{proof}

\proof[Proof of Proposition \ref{p:sp}]
We apply Lemma \ref{l:Holder} to $K:=\B_\nu$, $F_n:=\widetilde\Lambda^n\circ\big(\frac12 (\Lambda-\id)\big)$ and $\vartheta$ the restriction of 
$\omegaFS$ to $\B_\nu$. Recall that $\widetilde\Lambda = \delta\Lambda$ for some $\delta>1$ and that both $\widetilde\Lambda$ and $\frac12 (\Lambda - \id)$ preserve $\B_\nu$.

Since $ \|\cdot \|_{L^1} \lesssim \|\cdot \|$ by hypothesis, we have $|\vartheta (\varphi)| \leq \|\varphi\|_{L^1} \lesssim \|\varphi\|$ for $\varphi \in \B_\nu$. Hence $\vartheta$ is a Lipschitz function on $K$. Moreover, since $\Lambda = f_\mu^*$ is bounded with respect to $\|\cdot\|$ by assumption, the maps $\widetilde\Lambda$ and $\frac12(\Lambda-\id)$ are also Lipschitz on $K$. So we have $\|F_n\|_{\Lip} \leq A^n$ for some constant $A\geq 1$. Notice also that the assumption $\|\cdot\| \lesssim \|\cdot\|_{W^{1,2}}$ implies that $\B_\nu$ has finite diameter with respect to $\dist$.

We now have, for $\varphi \in K$  $$2 \, \delta^{-n} \, \vartheta \circ F_n (\varphi) = \vartheta \circ \Lambda^n \circ (\Lambda - \id) (\varphi) = \lp \omegaFS, \Lambda^{n+1}(\varphi) - \Lambda^n(\varphi) \rp = \lp (f_\mu^{n+1})_* \omegaFS - (f_\mu^n)_* \omegaFS, \varphi \rp . $$

It follows from Theorem \ref{thm:exponential-convergence} that $\lim_{n\to\infty} (f_\mu^n)_*(\omegaFS) = \nu$. Therefore, 
$$2\sum_{n\geq 0} \delta^{-n} \, (\vartheta \circ F_n )= -\omegaFS+\lim_{n\to\infty} (f_\mu^n)_*(\omegaFS)=-\omegaFS+\nu$$

By Lemma \ref{l:Holder} we get that $-\omegaFS+\nu$ defines a H\"older continuous function on $\B_\nu$. It follows that $\nu$ defines a H\"older continuous function on $\B_\nu$.
The proof is complete.
\endproof

We now apply the above results for some choices of the norm $\|\cdot\|$. Consider a Young's function $\Phi:[0,\infty)\to [0,\infty)$, that is, a convex increasing function such that
$$\lim_{t\to 0} \frac{\Phi(t)}{t} =0\quad \text{and} \quad  \lim_{t\to \infty} \frac{\Phi(t)}{t} =\infty.$$
We also assume that $e^{-t^2}\Phi(t)$ is bounded.
Consider the Luxemburg norm (or gauge norm) $$\|\varphi\|_\Phi := \inf\Big\{ A\in[0,\infty) : \int_{\P^1} \Phi \Big( \frac{|\varphi|}{A} \Big) \omegaFS \leq 1 \Big\}$$ and the associated Birnbaum-Orlicz space $L_\Phi(\P^1)$ consisting of measurable functions on $\P^1$ having finite $\|\cdot\|_\Phi$ norm, see \cite{rao-ren}.

The distance associated with this norm is denoted by $\dist_\Phi$. Since we are assuming that $e^{-t^2}\Phi(t)$ is bounded we have, by Moser-Trudinger's estimate (Proposition \ref{prop:exponential-estimate}), that $\|\cdot\|_\Phi \lesssim \|\cdot\|_{W^{1,2}}$ and $W^{1,2}\subset L_\Phi(\P^1)$. 

Define also the function $\eta_\Phi:\R_{\geq 0}\to [0,+\infty]$ by 
$$\eta_\Phi(s):=\sup_{\varphi\in W^{1,2}\setminus\{0\}} \frac{\|\varphi\|_{e^s \Phi} }{\|\varphi\|_{\Phi}} \cdot$$

\begin{theorem} \label{t:general-sp}
Let $\mu$ be a non-elementary probability measure on $G=\PSL_2(\C)$ having a finite $\chi$-moment and let $\nu$ be the associated stationary measure. Assume that $\eta_\Phi(4s)\lesssim \chi(s)+1$ for $s\geq 0$. Then $\nu$ has a H\"older continuous super-potential with respect to $W^{1,2}$ and the distance $\dist_\Phi$. 
\end{theorem}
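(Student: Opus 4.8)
The plan is to deduce the statement from Proposition \ref{p:sp}, applied with the auxiliary norm $\|\cdot\| := \|\cdot\|_\Phi$ and the distance $\dist := \dist_\Phi$. Concretely, I would verify the three hypotheses of that proposition: that $\|\cdot\|_{L^1}\lesssim\|\cdot\|_\Phi$, that $\|\cdot\|_\Phi\lesssim\|\cdot\|_{W^{1,2}}$, and that $\Lambda=f_\mu^*$ is bounded on $W^{1,2}$ with respect to $\|\cdot\|_\Phi$. The inequality $\|\cdot\|_\Phi\lesssim\|\cdot\|_{W^{1,2}}$ is already recorded above, being a consequence of the Moser--Trudinger estimate (Proposition \ref{prop:exponential-estimate}) together with the boundedness of $e^{-t^2}\Phi(t)$. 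The inequality $\|\cdot\|_{L^1}\lesssim\|\cdot\|_\Phi$ is an elementary property of Orlicz norms on the probability space $(\P^1,\omegaFS)$: convexity of $\Phi$ and $\Phi(t)/t\to\infty$ furnish a $t_0$ with $\Phi(t)\geq t$ for $t\geq t_0$, and splitting $\int_{\P^1}|\varphi|\,\omegaFS$ over $\{|\varphi|\leq t_0\|\varphi\|_\Phi\}$ and its complement gives $\|\varphi\|_{L^1}\leq(t_0+1)\|\varphi\|_\Phi$.

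The main step is the boundedness of $f_\mu^*$ for $\|\cdot\|_\Phi$. I would fix $\varphi\in W^{1,2}$ and write $f_\mu^*\varphi=\int_G g^*\varphi\,\diff\mu(g)$; by convexity of $\Phi$ and Jensen's inequality this average satisfies $\|f_\mu^*\varphi\|_\Phi\leq\int_G\|g^*\varphi\|_\Phi\,\diff\mu(g)$. To estimate $\|g^*\varphi\|_\Phi$, set $A:=\|\varphi\|_{e^{4\log\|g\|}\Phi}$ and use the change of variables $z\mapsto g(z)$ together with Lemma \ref{lemma:jacobian-estimate} (which gives $g_*\omegaFS\leq\|g\|^4\omegaFS$):
\[
\int_{\P^1}\Phi\!\Big(\frac{|g^*\varphi|}{A}\Big)\omegaFS=\int_{\P^1}\Phi\!\Big(\frac{|\varphi|}{A}\Big)g_*\omegaFS\leq\int_{\P^1}\|g\|^4\,\Phi\!\Big(\frac{|\varphi|}{A}\Big)\omegaFS=\int_{\P^1}\big(e^{4\log\|g\|}\Phi\big)\!\Big(\frac{|\varphi|}{A}\Big)\omegaFS\leq1,
\]
the final inequality being the definition of the Luxemburg norm $\|\cdot\|_{e^{4\log\|g\|}\Phi}$. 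Hence $\|g^*\varphi\|_\Phi\leq\|\varphi\|_{e^{4\log\|g\|}\Phi}\leq\eta_\Phi(4\log\|g\|)\|\varphi\|_\Phi$ by the definition of $\eta_\Phi$ (recall $\|g\|\geq1$, so $\log\|g\|\geq0$). Now the hypothesis $\eta_\Phi(4s)\lesssim\chi(s)+1$, applied with $s=\log\|g\|$, yields $\|g^*\varphi\|_\Phi\lesssim(\chi(\log\|g\|)+1)\|\varphi\|_\Phi$, so that
\[
\|f_\mu^*\varphi\|_\Phi\lesssim\Big(\int_G\big(\chi(\log\|g\|)+1\big)\,\diff\mu(g)\Big)\|\varphi\|_\Phi,
\]
and the integral is finite since $\mu$ is a probability measure with finite $\chi$-moment. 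With the three hypotheses verified, Proposition \ref{p:sp} gives that $\nu$ has a H\"older continuous super-potential with respect to $W^{1,2}$ and $\dist_\Phi$.

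I expect the only delicate bookkeeping to be: justifying the Jensen/Minkowski step for the $\mu$-average defining $f_\mu^*$ inside the Orlicz space $L_\Phi(\P^1)$ (and the measurability of $g\mapsto\|g^*\varphi\|_\Phi$); using that the infimum defining the Luxemburg norm is attained, so that $\int_{\P^1}\Phi(|\varphi|/\|\varphi\|_\Phi)\omegaFS\leq1$; and noting that $g^*\varphi\in W^{1,2}\subset L_\Phi(\P^1)$ for every $g$, so that all quantities above are finite. None of these is a genuine obstacle: the only load-bearing point is the chain $\|g^*\varphi\|_\Phi\leq\eta_\Phi(4\log\|g\|)\|\varphi\|_\Phi\lesssim(\chi(\log\|g\|)+1)\|\varphi\|_\Phi$ and the $\mu$-integrability of its right-hand side, which is exactly what the moment hypothesis and the hypothesis on $\eta_\Phi$ provide.
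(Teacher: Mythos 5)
Your proposal is correct and follows essentially the same route as the paper: reduce to Proposition \ref{p:sp} with the Luxemburg norm $\|\cdot\|_\Phi$, and verify the boundedness of $f_\mu^*$ via the Jacobian bound $g_*\omegaFS\leq\|g\|^4\omegaFS$ of Lemma \ref{lemma:jacobian-estimate}, the chain $\|g^*\varphi\|_\Phi\leq\|\varphi\|_{e^{4s}\Phi}\leq\eta_\Phi(4s)\|\varphi\|_\Phi\lesssim(\chi(\log\|g\|)+1)\|\varphi\|_\Phi$, and the finite $\chi$-moment. The only cosmetic difference is that you prove $\|\cdot\|_{L^1}\lesssim\|\cdot\|_\Phi$ by hand where the paper cites the standard Orlicz-space reference.
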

\proof
It is well-known that $\|\cdot\|_{L^1}\lesssim \|\cdot\|_\Phi$ (see \cite{rao-ren}) and we have seen that $\|\cdot\|_\Phi \lesssim \|\cdot\|_{W^{1,2}}$. So by Proposition \ref{p:sp}, it is enough to check that $\Lambda: W^{1,2}\to W^{1,2}$ is bounded with respect to the norm $\|\cdot\|_\Phi$. We have for $\varphi\in W^{1,2}$
$$\|\Lambda\varphi\|_\Phi = \Big\|\int_G g^*\varphi \, \diff\mu(g)\Big\|_\Phi \leq \int_G \|g^*\varphi\|_\Phi \, \diff \mu(g).$$
Since, by assumption, $\mu$ has a finite $\chi$-moment, it is enough to show that 
$$\|g^*\varphi\|_\Phi \lesssim (\chi(\log\|g\|) + 1) \|\varphi\|_\Phi \quad \text{for every } g \in G.$$

Set $s:=\log\|g\|$. Recall from Lemma \ref{lemma:jacobian-estimate} that $g_* \omegaFS \leq \|g\|^4 \omegaFS$. Then, for any $A > 0$, we have
$$\int_{\P^1} \Phi \Big( \frac{|g^*\varphi|}{A} \Big)\omegaFS = \int_{\P^1} \Phi \Big( \frac{|\varphi|}{A} \Big) g_*\omegaFS \leq  \int_{\P^1} \|g\|^4 \Phi \Big( \frac{|\varphi|}{A} \Big) \omegaFS =  \int_{\P^1}e^{4s} \Phi \Big( \frac{|\varphi|}{A} \Big) \omegaFS.$$
Hence
$$\|g^*\varphi\|_\Phi \leq \|\varphi\|_{e^{4s}\Phi} \leq \eta_\Phi(4s) \|\varphi\|_\Phi \lesssim (\chi(s) + 1) \|\varphi\|_\Phi = (\chi(\log\|g\|) +1) \|\varphi\|_\Phi.$$
The theorem follows.
\endproof

We can now use Theorem \ref{t:general-sp} to obtain explicit regularity properties of $\nu$ in terms of the moments of $\mu$. The idea is the following: assuming that $\mu$ has a finite $\chi$-moment, find a suitable Young's function $\Phi$ so that $\eta_\Phi(4s)\lesssim \chi(s)+1$.  Theorem \ref{t:general-sp} will then give that $\nu$ has a H\"older continuous super-potential with respect to $\dist_\Phi$. Together with Proposition \ref{p:size-of-disc} this will give an estimate for the mass of $\nu$ on small discs.

\vskip5pt

The following corollaries illustrate two extremal cases where our method applies. The same idea can be extended to other moment conditions on $\mu$.

\begin{corollary} \label{cor:reg-expmoment}
Let $\mu$ be a non-elementary measure on $\PSL_2(\C)$ with finite exponential moment and let $\nu$ be the associated stationary measure. Then there is a number $q \in [1,\infty)$ such that $\nu$ has a H\"older continuous super-potential with respect to $W^{1,2}$ and the $L^q$-norm. In particular, there are constants $\theta>0, A>0, c>0$ and $\alpha>0$ such that 
$$\int_{\P^1} e^{\theta |\varphi|^2} \, \diff \nu \leq A \quad \text{and} \quad \nu(\DD(a,r))\leq c r^\alpha$$
for every $\varphi \in W^{1,2}$ with $\|\varphi\|_{W^{1,2}}\leq 1$, $a\in \P^1$ and $0<r\leq 1$.
\end{corollary}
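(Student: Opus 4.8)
The plan is to apply Theorem \ref{t:general-sp} with a Young's function $\Phi$ chosen so that its "exponential amplification" $\eta_\Phi$ is controlled by the exponential moment of $\mu$. Since $\mu$ has a finite exponential moment, there is $p>0$ with $\int_G \|g\|^p \, \diff \mu(g) < +\infty$, i.e.\ $\mu$ has a finite $\chi$-moment for $\chi(s) = e^{ps}$. I would look for $\Phi$ with $\eta_\Phi(4s) \lesssim e^{ps} + 1$. The natural candidate is a power function $\Phi(t) = t^q$ for a suitably large $q \geq 1$, whose Luxemburg norm $\|\cdot\|_\Phi$ is (equivalent to) the usual $L^q$-norm. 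For such $\Phi$ one computes directly that $\|\varphi\|_{e^s \Phi} = e^{-s/q}\|\varphi\|_\Phi$ is false in general; rather, scaling $\Phi$ by $e^s$ corresponds to dividing the gauge by a factor $e^{s/q}$, so in fact $\eta_\Phi(s) = e^{s/q}$ for $\Phi(t)=t^q$. Hence $\eta_\Phi(4s) = e^{4s/q}$, and choosing $q$ large enough that $4/q \leq p$ (e.g.\ $q := \lceil 4/p \rceil$, and $q \geq 1$) gives $\eta_\Phi(4s) = e^{4s/q} \leq e^{ps} \lesssim \chi(s)+1$, as required. One must also check the standing hypotheses on $\Phi$: convexity and monotonicity of $t^q$ are clear for $q \geq 1$, the limits $\Phi(t)/t \to 0$ as $t \to 0$ and $\to \infty$ as $t \to \infty$ hold since $q>1$ (if $4/p \leq 1$ one simply takes $q = 2$, say, which still satisfies $4/q = 2 \leq p$ only if $p \geq 2$, so more carefully one takes $q := \max(2, \lceil 4/p\rceil)$), and $e^{-t^2}\Phi(t) = e^{-t^2} t^q$ is bounded. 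With these choices Theorem \ref{t:general-sp} applies and yields that $\nu$ has a H\"older continuous super-potential with respect to $W^{1,2}$ and $\dist_{\Phi} = \dist_{L^q}$.

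Next I would extract the two concrete consequences. For the exponential integrability $\int_{\P^1} e^{\theta|\varphi|^2}\,\diff\nu \leq A$: this does not follow formally from H\"older continuity of the super-potential alone, but rather from combining the spectral estimates of Theorem \ref{thm:exponential-convergence} with the Moser-Trudinger estimate (Proposition \ref{prop:exponential-estimate}). Concretely, by Corollary \ref{cor:nu-norm} the family $\{(f_\mu^n)^*\varphi : \|\varphi\|_{W^{1,2}} \leq 1,\ n \geq 0\}$ is bounded in $W^{1,2}$ — indeed $\|(f_\mu^n)^*\varphi - \langle \nu,\varphi\rangle\|_{W^{1,2}}$ decays and $|\langle\nu,\varphi\rangle|$ is bounded — so Proposition \ref{prop:exponential-estimate} gives uniform constants $\alpha, A$ with $\int_{\P^1} e^{\alpha ((f_\mu^n)^*\varphi)^2}\omegaFS \leq A$. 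Since $(f_\mu^n)_*\omegaFS \to \nu$ weakly and, after passing to a subsequence, $(f_\mu^n)^*\varphi \to \langle\nu,\varphi\rangle$ pointwise a.e.\ (or uniformly on test functions, via the equidistribution), one pushes the inequality to the limit; but a cleaner route is: $\int e^{\alpha \varphi^2}\,\diff\nu = \lim_n \int e^{\alpha\varphi^2}(f_\mu^n)_*\omegaFS = \lim_n \int e^{\alpha (g^*\varphi)^2}\cdots$ — essentially repeating the argument in the proof of Proposition \ref{prop:f^*-bounded} but with $\omegaFS$ replaced by the stationary $\nu$ in the limit, using that $\nu$ acts continuously on $W^{1,2}$ so $e^{\alpha\varphi^2}$ can be approximated by truncations. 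I would spell this out by taking $h_N := \min(e^{\alpha\varphi^2}, N)$, noting $\langle (f_\mu^n)_*\omegaFS, h_N\rangle \leq A$ uniformly (since $h_N \leq e^{\alpha\varphi^2}$ and each $(f_\mu^n)_*\omegaFS$-integral of $e^{\alpha\varphi^2}$ is $\leq A\|g\|^{4}$-type bound averaged out to $\leq A'$), passing $n \to \infty$ to get $\langle \nu, h_N\rangle \leq A'$, then $N \to \infty$ by monotone convergence. Setting $\theta$ slightly smaller than $\alpha$ if needed absorbs constants.

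For the mass estimate $\nu(\DD(a,r)) \leq cr^\alpha$: this is immediate from Proposition \ref{p:size-of-disc} once we identify $\mathcal V_\epsilon(r)$ for the $L^q$-distance. We have $\mathcal V_\epsilon(r) = \max_a \|u^\epsilon_{a,r}\|_{L^q}$, and since $u^\epsilon_{a,r} = \max(-\log(\dist_{\P^1}(z,a)/2r), 0)^{1/2-\epsilon}$ is supported on $\DD(a,2r)$ and bounded by $(\log(1/\text{something}))^{1/2-\epsilon}$ on a disc of radius $\sim r$, a direct computation in polar coordinates gives $\|u^\epsilon_{a,r}\|_{L^q}^q = \int_{\DD(a,2r)} |u^\epsilon_{a,r}|^q \omegaFS \lesssim r^2 \cdot (\text{logarithmic factor})$, hence $\mathcal V_\epsilon(r) \lesssim r^{2/q}(\text{log factor})^{\text{const}}$, which is $\lesssim r^{2/q - \epsilon'}$ for any small $\epsilon' > 0$ and $r$ small. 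Feeding this into Proposition \ref{p:size-of-disc} gives $\nu(\DD(a,r)) \leq c\,\mathcal V_\epsilon(r)^{\alpha_0} \leq c' r^{\alpha}$ with $\alpha = \alpha_0(2/q - \epsilon')$. The main obstacle I anticipate is the exponential-integrability claim: the H\"older super-potential property gives control on $\langle\nu,\varphi\rangle$ in terms of $L^q$-distances but not directly on $\langle \nu, e^{\theta\varphi^2}\rangle$, so one genuinely needs to revisit the Moser-Trudinger mechanism for the stationary measure rather than for $\omegaFS$, and care must be taken that the uniform bound survives the weak limit $(f_\mu^n)_*\omegaFS \to \nu$; the truncation-and-monotone-convergence argument sketched above is the way around this, and it relies essentially on the boundedness in $W^{1,2}$ of the iterated pullbacks guaranteed by Theorem \ref{thm:exponential-convergence} and Corollary \ref{cor:nu-norm}.
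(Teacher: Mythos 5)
Your choice $\Phi(t)=t^q$ with $q$ large, the identification $\|\cdot\|_\Phi=\|\cdot\|_{L^q}$ with $\eta_\Phi(s)=e^{s/q}$, the application of Theorem \ref{t:general-sp}, and the disc estimate obtained from $\mathcal V_\epsilon(r)\lesssim r^{\gamma}$ for $\gamma<2/q$ together with Proposition \ref{p:size-of-disc} all coincide with the paper's proof. The genuine gap is in the exponential-integrability step. Your truncation argument rests on the uniform bound $\lp (f_\mu^n)_*\omegaFS, e^{\alpha\varphi^2}\rp\leq A'$ for all $n$, which you justify by saying the ``$A\|g\|^{4}$-type bound'' averages out to a constant. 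Unwinding it, Lemma \ref{lemma:jacobian-estimate} gives $\lp (f_\mu^n)_*\omegaFS, e^{\alpha\varphi^2}\rp\leq A\int_{G^n}\|g_n\cdots g_1\|^4\,\diff\mu^{\otimes n}$. Already for $n=1$ this requires $\int_G\|g\|^4\,\diff\mu<\infty$, which is stronger than the hypothesis ($\int_G\|g\|^p\,\diff\mu<\infty$ for some possibly small $p>0$); and for general $n$ the right-hand side grows at least like $e^{4n\gamma}$ by Jensen's inequality and the positivity of the Lyapunov exponent, so it is not uniformly bounded in $n$. Nor can you extract the needed bound from Moser--Trudinger applied to $(f_\mu^n)^*\varphi$: by Jensen, $e^{\alpha((f_\mu^n)^*\varphi)^2}\leq (f_\mu^n)^*(e^{\alpha\varphi^2})$, so uniform boundedness of $\int e^{\alpha((f_\mu^n)^*\varphi)^2}\omegaFS$ is the weaker statement and does not control $\lp (f_\mu^n)_*\omegaFS, e^{\alpha\varphi^2}\rp$. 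Thus the key uniform estimate in your truncation-and-monotone-convergence scheme is unsupported, and it is essentially equivalent to the inequality you are trying to prove.

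Moreover, contrary to your remark, the estimate $\int_{\P^1}e^{\theta|\varphi|^2}\,\diff\nu\leq A$ does follow from the H\"older continuity of the super-potential (plus Moser--Trudinger for $\omegaFS$, not for $\nu$), and this is exactly how the paper argues: set $\varphi_N:=\min(|\varphi|,N)$ and $\psi_N:=\varphi_{N+1}-\varphi_N$; the $\psi_N$ lie in a bounded subset of $W^{1,2}$, satisfy $0\leq\psi_N\leq1$, vanish on $\{|\varphi|\leq N\}$ and equal $1$ on $\{|\varphi|\geq N+1\}$. The H\"older continuity of the functional $\nu$ on bounded subsets of $W^{1,2}$ with respect to the $L^q$-distance then gives $\nu\{N\leq|\varphi|\leq N+1\}\leq\nu(\psi_{N-1})\lesssim\|\psi_{N-1}\|_{L^q}^\beta\lesssim\area\{|\varphi|\geq N-1\}^{\beta/q}$, while Proposition \ref{prop:exponential-estimate} gives $\area\{|\varphi|\geq N-1\}\lesssim e^{-\alpha' N^2}$; summing over $N$ with $\theta$ small enough yields the desired bound. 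You should replace your weak-limit argument by this layer-cake use of the super-potential you have already established in the first part of your proof.
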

\proof
Fix a number $q$ large enough and choose $\Phi(t)=t^q$. It can be easily seen that  $\|\cdot\|_\Phi$ is the $L^q$-norm and that $\eta_\Phi(s)=e^{s/q}$. By assumption, $\mu$ has a finite $\chi$-moment where $\chi(s) = e^{ps}$ for some $p > 0$. Since $q$ is large, we have $\eta_\Phi(4s)\lesssim \chi(s)$. By Theorem \ref{t:general-sp}, $\nu$ has a H\"older continuous super-potential with respect to $W^{1,2}$ and the norm $L^q$.

Let $\varphi \in W^{1,2}$ such that $\|\varphi\|_{W^{1,2}} \leq 1$. For $N\geq 1$, define $\varphi_N:=\min(|\varphi|,N)$. Then $\varphi_N$ belongs to a bounded subset of $W^{1,2}$ (cf. \cite[Prop. 4.1]{dinh-sibony:decay-correlations}). Define also $\psi_N:= \varphi_{N+1} - \varphi_N$. Notice that $0 \leq \psi_N \leq 1$, $\psi_N \equiv 0$ on $\{|\varphi| \leq N\}$, and $\psi_N \equiv 1$ on $\{|\varphi| \geq N + 1\}$. Therefore
$$\nu\{N\leq |\varphi|\leq N+1\} \leq \nu(\psi_{N-1}) \lesssim \|\psi_{N-1}\|_{L^q}^\beta\lesssim \area\{|\varphi|\geq N-1\}^{\beta/q},$$
where $\beta > 0$ is the H\"older exponent of the functional defined by $\nu$ and the area is with respect to $\omegaFS$.
 
From Proposition \ref{prop:exponential-estimate} it follows that  
$\area\{|\varphi|\geq N-1\} \lesssim e^{-\alpha' N^2}$
for some $\alpha'>0$, so
\begin{equation} \label{eq:nu-annuli}
\nu\{N\leq |\varphi|\leq N+1\}  \lesssim e^{-\alpha'' N^2} \text{ for some } \alpha'' >0.
\end{equation}

Now,  for $\theta > 0$  small enough, the first estimate in the corollary follows after cutting the integral $\int_{\P^1} e^{\theta|\varphi|^2}\,\diff \nu$ along the subsets $\{N\leq |\varphi|\leq N+1\}$ and using (\ref{eq:nu-annuli}).

It is not difficult to see that for our choice of $\Phi$ we have $\mathcal V_\epsilon(r)\lesssim r^\gamma$ for every $0< \gamma< 2 /q$. Then, the second estimate in the corollary follows by applying Proposition \ref{p:size-of-disc}.
\endproof

\begin{remark} \label{rmk:holder-regular}
A measure $m$ satisfying $m(\DD(a,r)) \leq c r^\alpha$ for some constants $c,\alpha >0$ is often called \textit{H\"older regular}. The H\"older regularity of $\nu$ under an exponential moment condition is an old result due to Guivarc'h and Raugi, see \cite[VI.4]{bougerol-lacroix}.
\end{remark}

\begin{corollary} \label{cor:reg-firstmoment}
Let $\mu$ be a non-elementary measure on $\PSL_2(\C)$ with finite first moment and let $\nu$ be the associated stationary measure. Then there are constants $c>0$ and $\alpha>0$ such that 
$$\nu(\DD(a,r))\leq c|\log r|^{-\alpha}$$
for every $a\in \P^1$ and $0<r\leq 1$.
\end{corollary}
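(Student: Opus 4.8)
The strategy is to run the argument of Corollary \ref{cor:reg-expmoment} once more, but with a Young's function $\Phi$ adapted to the moment function $\chi(s)=s$, so that the hypothesis of Theorem \ref{t:general-sp} becomes $\eta_\Phi(4s)\lesssim s+1$. Since the growth of $\eta_\Phi$ is governed by the behaviour of $\Phi$ near the origin, I would take $\Phi$ vanishing very fast at $0$. Concretely, set $\Phi(t):=e^{t^2-1/t^2}$ for $t>0$ and $\Phi(0):=0$; a direct computation of $\Phi''$ (together with $\Phi'(0^+)=0$ and $\Phi(t)/t\to\infty$) shows that $\Phi$ is a genuine Young's function, and $e^{-t^2}\Phi(t)=e^{-1/t^2}\leq 1$, so by the discussion in Section \ref{sec:regularity} we have $\|\cdot\|_\Phi\lesssim\|\cdot\|_{W^{1,2}}$ and $W^{1,2}\subset L_\Phi(\P^1)$.

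The key point is the bound $\eta_\Phi(4s)\lesssim s+1$. For $\lambda\geq 2$ and $t>0$, the arithmetic--geometric mean inequality gives
\[
(t^2-t^{-2})-(\lambda^{-2}t^2-\lambda^2 t^{-2})=(1-\lambda^{-2})t^2+(\lambda^2-1)t^{-2}\ \geq\ \frac{2(\lambda^2-1)}{\lambda}\ \geq\ \lambda ,
\]
hence the pointwise estimate $\Phi(t/\lambda)\leq e^{-\lambda}\Phi(t)$ for all $t\geq 0$. Therefore, if $\|\varphi\|_\Phi\leq 1$ then $\int_{\P^1}\Phi(|\varphi|/\lambda)\,\omegaFS\leq e^{-\lambda}\int_{\P^1}\Phi(|\varphi|)\,\omegaFS\leq e^{-\lambda}\leq e^{-4s}$ as soon as $\lambda\geq 4s$, so $\|\varphi\|_{e^{4s}\Phi}\leq\max(4s,2)\,\|\varphi\|_\Phi$ and $\eta_\Phi(4s)\leq\max(4s,2)\lesssim s+1$. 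Since a finite first moment of $\mu$ is exactly a finite $\chi$-moment for $\chi(s)=s$, Theorem \ref{t:general-sp} applies and yields that $\nu$ has a H\"older continuous super-potential with respect to $W^{1,2}$ and $\dist_\Phi$.

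It remains to bound $\mathcal V_\epsilon(r)=\max_{a\in\P^1}\|u^\epsilon_{a,r}\|_\Phi$. Fix $\epsilon\in(0,\tfrac12)$ and a constant $\beta$ with $0<\beta<\epsilon$. In a chart centred at $a$, writing $\rho=\dist_{\P^1}(z,a)$ and substituting $t=\log(2r/\rho)$, one obtains
\[
\int_{\P^1}\Phi\big(|\log r|^{\beta}\,u^\epsilon_{a,r}\big)\,\omegaFS\ \lesssim\ r^2\int_0^\infty\Phi\big(|\log r|^{\beta}t^{\frac12-\epsilon}\big)e^{-2t}\,dt\ \leq\ r^2\int_0^\infty e^{\,|\log r|^{2\beta}t^{1-2\epsilon}-2t}\,dt ,
\]
using $\Phi(x)\leq e^{x^2}$. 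A Laplace-type estimate shows that the exponent $|\log r|^{2\beta}t^{1-2\epsilon}-2t$ has maximum over $t>0$ comparable to $|\log r|^{\beta/\epsilon}$, so the inner integral is $\lesssim e^{\,C|\log r|^{\beta/\epsilon}}$; since $\beta/\epsilon<1$, the factor $r^2=e^{-2|\log r|}$ dominates it and the whole expression stays bounded for $0<r\leq 1$. By convexity of $\Phi$ this gives $\|u^\epsilon_{a,r}\|_\Phi\lesssim|\log r|^{-\beta}$ for small $r$, and for $r$ bounded away from $0$ the family $\{u^\epsilon_{a,r}\}$ is bounded in $W^{1,2}\subset L_\Phi$, so the bound holds there too. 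Hence $\mathcal V_\epsilon(r)\lesssim|\log r|^{-\beta}$, and Proposition \ref{p:size-of-disc} gives $\nu(\DD(a,r))\leq c\,\mathcal V_\epsilon(r)^\alpha\lesssim|\log r|^{-\alpha\beta}$, i.e.\ the assertion with exponent $\alpha\beta>0$.

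The main obstacle is this last step: the change of variables and the saddle-point estimate for $\|u^\epsilon_{a,r}\|_\Phi$, and in particular checking that the growth $e^{C|\log r|^{\beta/\epsilon}}$ of the inner integral is genuinely absorbed by $r^2$ — which is exactly where the choice $\beta<\epsilon$ (hence $\beta/\epsilon<1$) is used — with some care needed to keep the resulting constant uniform over all of $0<r\leq 1$ rather than only in the limit $r\to 0$.
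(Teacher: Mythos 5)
Your proof is correct and follows essentially the same route as the paper: check the hypothesis $\eta_\Phi(4s)\lesssim s+1$ of Theorem \ref{t:general-sp} for a Young's function with $e^{t^2}$-type growth at infinity and very fast decay at $0$, then bound $\mathcal V_\epsilon(r)\lesssim|\log r|^{-\beta}$ and conclude via Proposition \ref{p:size-of-disc}. The only real difference is the choice of $\Phi$: the paper glues $e^{-t^{-3}}$ near $0$ to $e^{t^2}$ at infinity and verifies Claim 1 by splitting $\{|\varphi|>2\sqrt s\}$ and $\{|\varphi|\le 2\sqrt s\}$, whereas your global $\Phi(t)=e^{t^2-t^{-2}}$ gives the pointwise inequality $\Phi(t/\lambda)\le e^{-\lambda}\Phi(t)$ for $\lambda\ge 2$, a slightly cleaner verification of the same estimate.
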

\proof
As above, we will apply Theorem  \ref{t:general-sp} for a suitable function $\Phi$. Observe that the function 
$t\mapsto e^{-t^{-3}}$ is convex and increasing on some interval $[0,t_0]$ in $\R_{\geq 0}$. We extend it to a convex increasing function $\Phi$ on $\R_{\geq 0}$ such that $\Phi(t)=e^{t^2}$ for $t$ large enough.

\medskip\noindent
{\bf Claim 1.} We have $\eta_\Phi(4s)\lesssim s +1 = \chi(s) + 1$ for $s\geq 0$. 

\medskip

It is enough to prove that $\eta_\Phi(4s) \leq ks$ for some constant $k >0$ and $s$ large enough. Let $\varphi \in W^{1,2}$ be such that  $\int_{\P^1} \Phi(|\varphi|)\omegaFS = 1$.  We need to show that  
\begin{equation} \label{eq:eta-estimate}
\int_{\P^1} \Phi \Big( \frac{|\varphi|}{k' s} \Big)\omegaFS \leq  e^{-4s}
\end{equation}
for some constant $k'> 0 $ and $s$ large enough.

Observe that $\Phi(t)\lesssim e^{t^2}$ on $\R_{\geq 0}$. We have 
$$\int_{|\varphi|>2\sqrt{s}} \Phi \Big( \frac{|\varphi|}{s} \Big) \omegaFS \lesssim \int_{|\varphi|>2\sqrt{s}} e^{|\varphi|^2/s^2}\omegaFS \le e^{-s} \int_{|\varphi|>2\sqrt{s}} e^{|\varphi|^2}\omegaFS \leq e^{-s}\int_{\P^1} \Phi(|\varphi|)\omegaFS = e^{-s}.$$
On the other hand, we have
$$\int_{|\varphi|\leq 2\sqrt{s}} \Phi\Big(\frac{|\varphi|}{s} \Big)\omegaFS = \int_{|\varphi|\le 2\sqrt{s}} e^{-(|\varphi|/s)^{-3}}\omegaFS\le \int_{\P^1} e^{-s}\omegaFS =e^{-s}.$$ 
This gives (\ref{eq:eta-estimate}) for $k'=5$,  ending the proof of the claim.

\medskip

By Theorem  \ref{t:general-sp} and the claim, $\nu$ has a H\"older continuous super-potential with respect to $W^{1,2}$ and the distance $\dist_\Phi$.

To finish the proof, we now need to estimate the function $\mathcal{V}_\epsilon(r)$ appearing in Proposition \ref{p:size-of-disc}. It is enough to consider a fixed value of $\epsilon$. Take $\epsilon:=1/4$ and set $u:=u_{a,\epsilon}$.

\medskip\noindent
{\bf Claim 2.} We have $\|u\|_\Phi \leq |\log r|^{-1/8}$ for $r$ small enough.

\medskip
Set $A:=|\log r|^{-1/8}$. By the definition of $\|\cdot\|_\Phi$, we need to check that 
$$\int_{\P^1} \Phi \Big(\frac{|u|}{A} \Big)\omegaFS <1.$$

In order to simplify the notation, assume that $a=0$ and denote by $|z|$ the distance between $z$ and $0$. Then $u = |\log (|z| / 2r)|^{1/4}$ on $|z|< 2r$ and zero elsewhere. Observe that $|u|>A$ if and only if $|z|<2r e^{-|\log r|^{-1/2}}$. Moreover, we have  $|u|\leq |\log|z||^{1/4}$. Thus, using that $\omegaFS$ is comparable with $i \diff z \wedge \diff \overline z$ near $0$,
we have for $s:=-\log|z|$ and $r$ small
\begin{align*}
\int_{|u|>A} \Phi \Big( \frac{|u|}{A} \Big) \omegaFS &= \int_{|z|<2r e^{-|\log r|^{-1/2}}} \Phi \Big( \frac{|u|}{A} \Big) \omegaFS \lesssim \int_{|z|<3r} e^{|u|^2/A^2}\omegaFS \\ & \lesssim \int_{|\log r|-3}^\infty e^{A^{-2}s^{1/2}} e^{-2s} ds  \lesssim \int_{|\log r|-3}^\infty e^{-2s +2s^{3/4}} ds  \lesssim \int_{|\log r|-3}^\infty e^{-s} ds = O(r).
\end{align*}

Recall that $\Phi(0) = 0$ and that $u$ is supported by $\DD(a,2r)$. Then
$$\int_{|u|\leq A} \Phi \Big(\frac{|u|}{A} \Big) \omegaFS \lesssim \area (\DD(a,2r))=O(r^2).$$
The claim follows.

\medskip
The last claim gives that $\mathcal{V}_\epsilon(r) \lesssim |\log r|^{-\gamma}$ for $\epsilon = 1 / 4$, $r$ small and a suitable constant $\gamma >0$. The corollary then follows from Proposition \ref{p:size-of-disc}.
\endproof

\begin{remark} \label{rmk:BQregularity}
A similar type of regularity under a finite $p^{th}$ moment condition was obtained by Benoist-Quint in \cite{benoist-quint:CLT}. This is a crucial ingredient in their proof of the Central Limit Theorem. We note that the H\"older exponents appearing in this section can be made explicit. We chose not do so in order to keep the paper less technical. 
\end{remark}

\appendix

\section{Elementary sets and auxiliary lemmas} \label{sec:appendix}

We present in this appendix some results used in the text. A number of them are probably known to experts.

\vskip5pt

Let us first recall the classification of elements of $\Aut(\P^1)$. In this appendix we shall denote by the same symbol $g$ an element of $\Aut(\P^1)$, its corresponding matrix in $\SL_2(\C)$ and its class in $\PSL_2(\C)$. This should not cause any confusion.

 Recall that an element $g$ of $\Aut(\P^1)$ different from the identity is conjugated to either $z \mapsto z + 1$ or $z \mapsto \lambda z$ for some $\lambda \in \C \setminus \{0,1\}$. In  the former case, $g$ is called \textit{parabolic} and in the latter, $g$ is called \textit{elliptic} if $|\lambda| = 1$ or \textit{loxodromic} if $|\lambda| \neq 1$. A parabolic automorphism has a single fixed point that attracts every point of $\P^1$. An elliptic automorphism has two different neutral fixed points and a loxodromic automorphism $g$ admits two fixed points $a$ and $b$ such that $g^n(z) \to a$ and $g^{-n}(z) \to b$ as $n$ tends to infinity, for any $z \in \P^1 \setminus \{a,b\} $. In terms of the trace of the corresponding matrices, $g \neq \text{Id}$ is parabolic if $\Tr^2 g = 4$, elliptic if $\Tr^2 g \in [0,4)$ and loxodromic if $\Tr^2 g \notin [0,4]$.

\vskip5pt

Now let $R$ be a subset of $\PSL_2(\C)$. For $n\geq 1$ denote $$ R^n : = \{g_n\cdots g_1 : g_i \in R\} \quad \text{ and } \quad S^n:=\{gh^{-1} : g,h \in R^n\}.$$

Recall that $R$ is non-elementary if its support does not preserve a finite subset of $\P^1$ and if the semi-group generated by $R$ is not relatively compact, see Definition \ref{def:elementary} and Remark \ref{rmk:elementary}.

\begin{lemma}\label{Rn lox}
	Let $R$ be a non-elementary subset of $\PSL_2(\C)$. Then there exist integers $N_1 \geq 1$ and $N_2\geq1$ such that   $R^{N_1}$ contains  a loxodromic element and $S^{N_2}$ contains a non-elliptic element.
\end{lemma}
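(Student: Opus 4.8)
The plan is to work entirely with traces and to exploit the fact, recorded in the classification above, that $g$ is loxodromic iff $\Tr^2 g \notin [0,4]$ and non-elliptic iff $\Tr^2 g \notin [0,4)$. First I would dispose of the easy reductions. Since $R$ is non-elementary, the semigroup it generates is not relatively compact, hence (Remark \ref{rmk:elementary}(ii)) $R$ is not conjugate into $\PSU(2)$; in particular not every element of $\bigcup_n R^n$ is elliptic or the identity, for otherwise the group generated by $R$ would be bounded. Actually a cleaner route: if every element of every $R^n$ were elliptic or $=\id$, then in particular every product of elements of $R$ would have trace in $[-2,2]$, which bounds the entries after diagonalising and forces relative compactness — contradiction. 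So there is some word $w = g_k\cdots g_1$ with $\Tr^2 w \notin [0,4]$, i.e.\ $R^{N_1}$ contains a loxodromic element for $N_1 = k$. (One must be a little careful that the length is fixed: if the shortest such word has length $k$ we simply take $N_1=k$; if one wants a uniform length one can pad with the observation that $R$ is non-empty, but the statement only asks for the existence of \emph{some} $N_1$, so this is fine.)

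The substantive part is the claim about $S^{N_2}$. Here the obstacle is that $S^n$ consists of commutator-like elements $g h^{-1}$ with $g,h\in R^n$, and a priori these could all be elliptic even when $R^n$ is rich. The key idea is the standard trace identity: for $A,B\in \SL_2(\C)$,
\begin{equation*}
\Tr(AB) + \Tr(AB^{-1}) = \Tr(A)\Tr(B).
\end{equation*}
I would apply this with $A = g_0$ a loxodromic element of $R^{N_1}$ (from the first part) and $B = h$ ranging over $R^{N_1}$, noting $g_0 h^{-1}\in S^{N_1}$. If \emph{every} element of $S^{N_1}$ were elliptic, then $\Tr^2(g_0 h^{-1}) \in [0,4)$ for all $h\in R^{N_1}$, so $\Tr(g_0 h^{-1})$ stays in the bounded real interval $[-2,2]$; combined with $\Tr(g_0 h)$ being bounded the same way (since $g_0 h \in R^{2N_1}\cdot(\text{stuff})$ — more precisely one uses that $g_0 h$ is also of the form $g h^{-1}$ with $g,h'\in R^{2N_1}$, or one feeds $h^{-1}$ into the identity), the identity forces $\Tr(g_0)\Tr(h)$ to be bounded, hence $\Tr(h)$ bounded for all $h\in R^{N_1}$ since $\Tr(g_0)\neq 0$ (a loxodromic element with trace $0$ would be elliptic). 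Running this for all powers $R^{nN_1}$ one gets that every element of $R^n$ has bounded trace, and then a Cartan-decomposition argument as in Lemma \ref{lemma:jacobian-estimate} shows the semigroup generated by $R$ is relatively compact, contradicting non-elementariness. Hence for some power $N_2$ (a multiple of $N_1$) the set $S^{N_2}$ contains an element with $\Tr^2\notin[0,4)$, i.e.\ a non-elliptic element different from the identity.

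The main obstacle I anticipate is making the boundedness-to-compactness step precise: one needs that a sub-semigroup $H$ of $\SL_2(\C)$ all of whose elements have trace in a fixed bounded set is relatively compact. This is where I would invoke Cartan's decomposition $g = k a k'$ with $a = \diag(\lambda,\lambda^{-1})$, $\lambda = \|g\|\geq 1$: if $\|g\|$ is unbounded along $H$ then, after extracting, $g/\|g\|$ converges to a rank-one matrix $P$, and by considering $g^2$ or products $g g'$ one produces elements whose trace grows like $\|g\|\,\|g'\|$ unless strong algebraic coincidences occur — coincidences which, when they persist for \emph{all} of $H$, again force $H$ to fix a point or a pair of points in $\P^1$, i.e.\ $R$ elementary. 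The bookkeeping of "which cases can occur" is the fiddly part, but each branch ends in the same contradiction with the hypothesis that $R$ is non-elementary, so the structure of the argument is robust.
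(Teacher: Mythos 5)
Your argument for the second assertion has a concrete gap at the trace-identity step. The reductio hypothesis (``every element of $S^{n}$ is elliptic'' --- it should in any case read ``elliptic or the identity'', since $\id=gg^{-1}\in S^n$ always) only controls traces of \emph{quotients} $gh^{-1}$ with $g,h$ words of the \emph{same} length $n$. In Fricke's identity $\Tr(g_0h)+\Tr(g_0h^{-1})=\Tr(g_0)\Tr(h)$ the term $\Tr(g_0h)$ is the trace of a product lying in $R^{2N_1}$, and nothing in your hypothesis bounds it: your two attempted justifications both fail. Writing $g_0h=g(h')^{-1}$ with $g,h'\in R^{2N_1}$ would force $g=g_0hh'$, a word of length $4N_1$, not $2N_1$; and substituting $h^{-1}$ into the identity returns the same equation, because $\Tr(h^{-1})=\Tr(h)$ in $\SL_2$. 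Indeed the conclusion you want to extract --- a uniform bound on $\Tr$ over all of $R^{nN_1}$ --- is incompatible with the loxodromic $g_0$ you have already produced, since $\Tr(g_0^k)\to\infty$ and $g_0^k\in R^{kN_1}$; this is a symptom that the unproved step is exactly where the argument breaks, not a shortcut to a contradiction. The paper avoids this entirely by producing its non-elliptic witness directly as a quotient of two words of \emph{equal} length: with $h_0\in R^{N_1}$ having a fixed-point set different from $\Fix(g_0)$, either $g_0h_0g_0^{-1}h_0^{-1}=(g_0h_0)(h_0g_0)^{-1}\in S^{2N_1}$ is parabolic (one common fixed point, Lemma \ref{1 common fixed point}), or $g_0^N(h_0^N)^{-1}\in S^{NN_1}$ is loxodromic for some $N$ (disjoint fixed sets, Lemmas \ref{elli-lox}, \ref{para-lox}, \ref{lox-lox}); that length bookkeeping is what your construction is missing.

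The first assertion and your final ``boundedness-to-compactness'' step are also not established. Negating ``some word is loxodromic'' leaves elliptic, parabolic \emph{and} identity words, and parabolics have trace $\pm2$, inside your interval, so they are not excluded; moreover, bounded traces alone do \emph{not} force relative compactness of a sub-semigroup of $\SL_2(\C)$ --- the unipotent family $\left(\begin{smallmatrix}1 & t\\ 0 & 1\end{smallmatrix}\right)$ has all traces equal to $2$ and is unbounded --- so the non-elementarity hypothesis must enter in an essential way (ruling out a common fixed point or invariant pair). That is precisely the nontrivial content here, which the paper does not reprove but imports from \cite{das-tushar-urbanski} to get a loxodromic element in the semigroup generated by $R$. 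Your closing paragraph acknowledges this and sketches a rank-one-limit argument, but as written both halves of your proposal rest on this unproven (and, as stated, false without the non-elementarity input) compactness criterion, so the proof is not complete.
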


\begin{proof}
The first assertion is well known. First, we extend the action of $\PSL_2(\C)$ to the $3$-dimensional hyperbolic space $\mathbb H^3$ (see Remark \ref{rmk:elementary}-(iii)). Then, the results from \cite[Chapter 6]{das-tushar-urbanski} imply that the semi-group generated by $R$ contains a loxodromic element. This gives the first assertion.

We now prove the second assertion. Since $R^{N_1}$ is non-elementary, we can find another element $h_0$ in $R^{N_1}$ whose fix point set is different from that of $g_0$. If  $|\Fix(g_0)\cap\Fix(h_0)|=1$, Lemma \ref{1 common fixed point} below implies that $g_0h_0g_0^{-1}h_0^{-1}\in S^{2N_1}$ is parabolic. If $\Fix(g_0)\cap\Fix(h_0)=\varnothing$,  Lemmas \ref{elli-lox}, \ref{para-lox} and \ref{lox-lox} below show that there is an $N_3 \geq 1$ such that $g_0^{N_3} (h_0^{-1})^{N_3} \in S^{N_1 N_3}$ is loxodromic. This proves the second assertion and concludes the proof of the lemma.	
\end{proof}

\begin{lemma}\label{1 common fixed point}
If $g,h$ are two non-trivial elements in $\PSL_2(\C)$, $g$ has $2$ fixed points on $\P^1 $ and $|\Fix(g)\cap\Fix(h)|=1$, then $ghg^{-1}h^{-1}$ is parabolic.
\end{lemma}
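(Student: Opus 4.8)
The plan is to trivialise the problem by choosing good coordinates and then carrying out a one-line matrix multiplication.

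First I would use that $\PSL_2(\C)$ acts doubly transitively on $\P^1$ to conjugate the pair $(g,h)$ so that the common fixed point $\Fix(g)\cap\Fix(h)$ becomes $\infty$ and the second fixed point of $g$ becomes $0$. Since conjugation carries $ghg^{-1}h^{-1}$ to the commutator of the conjugated elements and preserves the parabolic/elliptic/loxodromic classification, no generality is lost. After this normalisation $g$ is represented by a diagonal matrix $\left(\begin{smallmatrix}\mu & 0\\ 0 & \mu^{-1}\end{smallmatrix}\right)$ with $\mu^{2}\neq 1$ (the hypothesis that $g$ is non-trivial with two fixed points forces $\mu^{2}\notin\{0,1\}$), and $h$, fixing $\infty$, is upper triangular, $h=\left(\begin{smallmatrix}\alpha & \beta\\ 0 & \alpha^{-1}\end{smallmatrix}\right)$. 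The hypothesis $|\Fix(g)\cap\Fix(h)|=1$ says precisely that $h$ does not fix $0$; since the M\"obius map of $h$ sends $0$ to $\alpha\beta$, this means $\beta\neq 0$.

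Next I would simply multiply out the four matrices. A direct computation gives
\[
ghg^{-1}h^{-1}=\begin{pmatrix}1 & \alpha\beta(\mu^{2}-1)\\ 0 & 1\end{pmatrix},
\]
a matrix of trace $2$, i.e.\ with $\Tr^{2}=4$. Because $\alpha\neq 0$, $\beta\neq 0$ and $\mu^{2}\neq 1$, the upper-right entry is non-zero, so the commutator is a non-trivial translation, hence parabolic by the trace classification recalled at the beginning of this appendix.

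There is no real obstacle here; the only point that needs a word of care is the non-triviality of the commutator — otherwise it would be the identity rather than a parabolic element — and this is exactly where all three hypotheses ($g$ non-trivial, $g$ with two fixed points, and $h$ not fixing the second fixed point of $g$) enter, through the non-vanishing conditions $\mu^{2}\neq 0$, $\mu^{2}\neq 1$ and $\beta\neq 0$.
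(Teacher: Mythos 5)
Your proof is correct. Note, though, that the paper does not give an argument at all here: its proof of this lemma is a citation to Maskit's \emph{Kleinian groups} (p.~12). Your write-up is essentially the standard computation behind that reference, carried out in full: conjugating so that the common fixed point is $\infty$ and the second fixed point of $g$ is $0$ puts $g=\left(\begin{smallmatrix}\mu&0\\0&\mu^{-1}\end{smallmatrix}\right)$ with $\mu^{2}\neq 1$ and $h=\left(\begin{smallmatrix}\alpha&\beta\\0&\alpha^{-1}\end{smallmatrix}\right)$ with $\beta\neq0$, and the commutator indeed comes out as $\left(\begin{smallmatrix}1&\alpha\beta(\mu^{2}-1)\\0&1\end{smallmatrix}\right)$, which is a non-trivial unipotent, hence parabolic. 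You correctly identify the one delicate point — ruling out the identity — and you use precisely the hypotheses needed for the upper-right entry $\alpha\beta(\mu^{2}-1)$ to be non-zero (the sign ambiguity of lifts to $\SL_2(\C)$ is harmless since it cancels in the commutator). Your version has the advantage of being self-contained, which fits the spirit of the appendix, where the neighbouring Lemmas \ref{elli-lox}--\ref{lox-lox} are proved by exactly this kind of explicit trace computation; the paper's citation simply outsources the same calculation to the literature.
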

\begin{proof}
See \cite[p.12]{maskit}.
\end{proof}

\begin{lemma}\label{elli-lox}
	Let $g,h \in \PSL_2(\C)$. If $g$ is loxodromic and $h$ is elliptic then there is an $N \geq 1$ such that $g^Nh^N$ is loxodromic.
\end{lemma}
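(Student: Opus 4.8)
The proof will run entirely through traces, using the classification recalled above: a non-identity element $\gamma\in\PSL_2(\C)$ is loxodromic exactly when $\Tr^2\gamma\notin[0,4]$, equivalently (for any lift to $\SL_2(\C)$) $|\Tr\gamma|>2$ or $\Tr\gamma\notin\R$. In particular it is enough to produce an $N\geq1$ with $|\Tr(g^Nh^N)|>2$, since such an element automatically has $\Tr^2\notin[0,4]$ and is $\neq\pm\id$. Note also that the statement does \emph{not} assume $\Fix(g)\cap\Fix(h)=\varnothing$, so the argument must cover that possibility as well.

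First I would dispose of the case where $h$ has finite order $m$ in $\PSL_2(\C)$: then $h^m=\id$, so $g^mh^m=g^m$, which is loxodromic because every positive power of a loxodromic element is loxodromic (its eigenvalue $\lambda$ has $|\lambda|\neq1$, hence so does $\lambda^m$); take $N=m$. So we may assume $h$ has infinite order. After conjugating, take $g=\left(\begin{smallmatrix}\mu&0\\0&\mu^{-1}\end{smallmatrix}\right)$ with $|\mu|>1$, and diagonalize $h=k\left(\begin{smallmatrix}\zeta&0\\0&\zeta^{-1}\end{smallmatrix}\right)k^{-1}$ with $|\zeta|=1$ and $\zeta$ not a root of unity (this is where infinite order is used), $k=\left(\begin{smallmatrix}a&b\\c&d\end{smallmatrix}\right)\in\SL_2(\C)$, so $ad-bc=1$. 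Then $h^N=k\left(\begin{smallmatrix}\zeta^N&0\\0&\zeta^{-N}\end{smallmatrix}\right)k^{-1}$, and a direct multiplication gives the exact formula
$$\Tr(g^Nh^N)=ad\,(u^N+u^{-N})-bc\,(v^N+v^{-N}),\qquad u:=\mu\zeta,\quad v:=\mu\zeta^{-1},$$
where $|u|=|v|=|\mu|>1$ and $u/v=\zeta^2$.

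It remains to show $\Tr(g^Nh^N)$ is unbounded in $N$, which yields the desired $N$. Suppose not. Since $|ad\,u^{-N}-bc\,v^{-N}|\leq(|ad|+|bc|)\,|\mu|^{-N}\to0$, the quantity $ad\,u^N-bc\,v^N$ would also be bounded. If $ad=0$ then $bc=-1$ and $|ad\,u^N-bc\,v^N|=|\mu|^N\to\infty$, a contradiction. If $ad\neq0$, dividing by $v^N$ forces $\zeta^{2N}=(u/v)^N\to bc/ad$; but $\zeta^2\neq1$ (as $\zeta$ is not a root of unity), so $\zeta^{2N}$ does not converge, a contradiction. In either case we conclude that some $N$ satisfies $|\Tr(g^Nh^N)|>2$, hence $g^Nh^N$ is loxodromic.

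The only delicate point is the last step in the case $ad\neq0\neq bc$: there the two dominant terms $ad\,u^N$ and $bc\,v^N$ have equal modulus $|\mu|^N$, so a priori they could cancel for every $N$; ruling this out is precisely where the hypothesis that $h$ is elliptic of infinite order enters, through the fact that $\zeta^{2N}$ fails to converge (indeed it equidistributes on the unit circle). The finite-order case and the cases $ad=0$ or $bc=0$ are immediate from the size of the exponential terms.
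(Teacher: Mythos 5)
Your proof is correct. It follows the same broad strategy as the paper -- show that $\Tr(g^Nh^N)$ cannot stay bounded, then invoke the trace criterion for loxodromicity -- but the mechanism is genuinely different. The paper never diagonalizes $h$: it uses that $\{h^n\}_{n\geq 1}$ is relatively compact (since $h$ is elliptic) to extract a subsequence $h^{n_k}\to r$, arranges (by passing to squares) that the top-left entry of $r$ is nonzero, and concludes that $|\Tr(g^{n_k}h^{n_k})|=|t^{n_k}a_{n_k}+t^{-n_k}d_{n_k}|\to\infty$ because the coefficient of the dominant term converges to a nonzero limit. You instead write $h=k\,\mathrm{diag}(\zeta,\zeta^{-1})\,k^{-1}$, derive the exact formula $\Tr(g^Nh^N)=ad\,(u^N+u^{-N})-bc\,(v^N+v^{-N})$, and rule out persistent cancellation between the two dominant terms via the non-convergence of $\zeta^{2N}$. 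Your route is self-contained and makes the possible cancellation completely explicit; the paper's compactness trick is shorter and treats all elliptic elements uniformly, whereas you split off the finite-order case. In fact your splitting is unnecessary: your final argument only uses $\zeta^{2}\neq 1$ (if $\zeta^{2N}\to L$ then $|L|=1$ and $\zeta^2L=L$, forcing $\zeta^2=1$), and $\zeta^2\neq 1$ holds for every elliptic $h\neq\mathrm{id}$, roots of unity included, so the whole proof could be run in one case. Both arguments are valid; yours is slightly longer but arguably more elementary, avoiding the compactness/subsequence step.
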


\begin{proof}
	We can assume that the fixed points of $g$ are $0$ and $\infty$ and $g= \left(\begin{array}{cc} t&0\\0&t^{-1}\end{array} \right)$, where $|t|>1$. Since  $h$ is elliptic the set $\{h^n : n \geq 1 \}$ is relatively compact in $\PSL_2(\C)$. Hence, there exists a subsequence $h^{n_k}$, converging to some elliptic $r=\left(\begin{array}{cc}a&b\\c&d\end{array}\right) \in \PSL_2(\C)$. After replacing $h^{n_k}$ by $h^{2n_k}$ and $r$ by $r^2$ if necessary we may assume that $a\neq 0$. Denoting by $a_n,b_n,c_n,d_n$ the entries of $h^n$ we have that $a_{n_k} \to a$ and $d_{n_k} \to d$.  Then  $|\Tr^2(g^{n_k}h^{n_k})| = |t^{n_k} a_{n_k} + t^{-n_k} d_{n_k}|^2\to\infty$. If we choose $N$ so that $|\Tr^2(g^Nh^N)| > 4$ then $g^Nh^N$ is loxodromic.
\end{proof}

\begin{lemma}\label{para-lox}
	Let $g,h \in \PSL_2(\C)$. If $g$ is loxodromic and $h$ is parabolic then there exists an $N \geq 1$ such that  $g^Nh^N$ is loxodromic.
\end{lemma}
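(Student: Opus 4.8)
The plan is to follow the strategy of Lemma \ref{elli-lox}, replacing the relative compactness of $\{h^n\}_{n\ge 1}$ — which fails when $h$ is parabolic — by the exact description of the iterates of a parabolic element. Since being loxodromic is invariant under conjugation, I would first conjugate so that the fixed points of $g$ are $0$ and $\infty$, i.e.\ $g=\left(\begin{smallmatrix} t&0\\0&t^{-1}\end{smallmatrix}\right)$ with $|t|>1$, and hence $g^N=\left(\begin{smallmatrix} t^N&0\\0&t^{-N}\end{smallmatrix}\right)$. Then I would fix the lift of $h$ to $\SL_2(\C)$ with $\Tr h=2$ and set $n:=h-\text{Id}$. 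Since $h$ is parabolic, $\Tr n=0$ and $\det n=\det(h-\text{Id})=1-\Tr h+\det h=1-2+1=0$, so by the Cayley--Hamilton identity $n^2=(\Tr n)\,n-(\det n)\,\text{Id}=0$; moreover $n\neq 0$ because $h\neq\text{Id}$. Consequently $h^N=(\text{Id}+n)^N=\text{Id}+Nn$ for every $N\ge 1$.

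Next I would compute the trace of $g^Nh^N=g^N+N\,g^Nn$. Writing $n=\left(\begin{smallmatrix}\alpha&b\\c&-\alpha\end{smallmatrix}\right)$ (with $\alpha^2+bc=0$), a one-line computation gives
$$\Tr(g^Nh^N)=\Tr(g^N)+N\,\Tr(g^Nn)=t^N(1+N\alpha)+t^{-N}(1-N\alpha).$$
Because $|t|>1$, the term $t^{-N}(1-N\alpha)$ tends to $0$ while $\big|t^N(1+N\alpha)\big|\to\infty$; this is clear when $\alpha\neq 0$, and when $\alpha=0$ the trace reduces to $t^N+t^{-N}$, whose modulus also tends to infinity. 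In every case $\big|\Tr(g^Nh^N)\big|\to\infty$, so $\Tr^2(g^Nh^N)\notin[0,4]$ once $N$ is large enough, and by the trace criterion recalled at the beginning of the appendix this means that $g^Nh^N$ is loxodromic, as desired.

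I do not expect any serious obstacle: the computation is short, and the only subtlety is the degenerate value $\alpha=0$, which corresponds to the fixed point of $h$ lying in $\{0,\infty\}=\Fix(g)$ and is disposed of in the same breath. In particular the conclusion holds without any hypothesis on the relative position of $\Fix(g)$ and $\Fix(h)$, which is all that is needed for the application in Lemma \ref{Rn lox}.
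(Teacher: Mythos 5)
Your proof is correct and follows essentially the same route as the paper's: both arguments compute $\Tr(g^N h^N)$ explicitly in a normal form and conclude from the trace criterion once its modulus blows up. The only cosmetic difference is that the paper conjugates both $g$ and $h$ to standard form and tracks the change-of-basis matrices, whereas you keep $h=\text{Id}+n$ with $n$ nilpotent ($n^2=0$, so $h^N=\text{Id}+Nn$), which gives the slightly cleaner formula $t^N(1+N\alpha)+t^{-N}(1-N\alpha)$; the degenerate case $\alpha=0$ is handled correctly.
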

\begin{proof}
	We can write $g=A\left(\begin{array}{cc}t&0\\0&1/t\end{array}\right)A^{-1},h=B\left(\begin{array}{cc}1&1\\0&1\end{array}\right)B^{-1}$, where $|t|>1$ and $A,B\in\PSL_2(\C)$. Define $a_i,b_i,c_i,d_i$ by $A^{-1}B=\left(\begin{array}{cc}a_1&b_1\\c_1&d_1\end{array}\right)$ and  $B^{-1}A=\left(\begin{array}{cc}a_2&b_2\\c_2&d_2\end{array}\right)$. Then
	\begin{equation*} 
	\begin{split}
	\Tr(g^nh^n)&=\Tr\left(A\left(\begin{array}{cc}t^n&0\\0&1/t^n\end{array}\right)A^{-1}B\left(\begin{array}{cc}1&n\\0&1\end{array}\right)B^{-1}\right) \\
	&=\Tr\left(\left(\begin{array}{cc}t^n&0\\0&1/t^n\end{array}\right)A^{-1}B\left(\begin{array}{cc}1&n\\0&1\end{array}\right)B^{-1}A\right) \\
		&=\Tr\left(\left(\begin{array}{cc}t^n&0\\0&1/t^n\end{array}\right)\left(\begin{array}{cc}a_1&b_1\\c_1&d_1\end{array}\right)\left(\begin{array}{cc}1&n\\0&1\end{array}\right)\left(\begin{array}{cc}a_2&b_2\\c_2&d_2\end{array}\right)\right) \\
		&=a_1c_2nt^n+a_1a_2t^n+b_1c_2t^n+c_1b_2/t^n+c_1d_2n/t^n+d_1d_2/t^n\\
		&=a_1c_2nt^n+t^n+c_1d_2n/t^n+1/t^n,
	\end{split}
	\end{equation*}
which shows that  $|\Tr^2(g^nh^n)|$ is unbounded as $n\to \infty$.  Hence $g^n h^n$ is loxodromic for $n$ large enough.
\end{proof}

\begin{lemma}\label{lox-lox}
	If $g,h$ are both loxodromic and $\Fix(g)\cap\Fix(h)=\varnothing$, then $g^Nh^N$ is loxodromic for some $N \geq 1$.
\end{lemma}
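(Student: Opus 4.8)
The plan is to argue exactly as in the proof of Lemma \ref{para-lox}: conjugate $g$ into diagonal form, write $h$ in terms of an auxiliary conjugating matrix, and show that the square of the trace of $g^n h^n$ is unbounded as $n \to \infty$. First I would normalize so that $\Fix(g) = \{0,\infty\}$ and $g = \left(\begin{smallmatrix} t & 0 \\ 0 & t^{-1}\end{smallmatrix}\right)$ with $|t| > 1$. Since $h$ is loxodromic it is diagonalizable, so write $h = B \left(\begin{smallmatrix} s & 0 \\ 0 & s^{-1}\end{smallmatrix}\right) B^{-1}$ with $|s| \neq 1$ and $B \in \PSL_2(\C)$, and set $\left(\begin{smallmatrix} a & b \\ c & d\end{smallmatrix}\right) := B$, $\left(\begin{smallmatrix} a' & b' \\ c' & d'\end{smallmatrix}\right) := B^{-1}$.

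Next I would compute $\Tr(g^n h^n)$ by the same manipulation: move the diagonal factor of $g^n$ past the trace, so that
\begin{equation*}
\Tr(g^n h^n) = \Tr\left( \left(\begin{smallmatrix} t^n & 0 \\ 0 & t^{-n}\end{smallmatrix}\right) B \left(\begin{smallmatrix} s^n & 0 \\ 0 & s^{-n}\end{smallmatrix}\right) B^{-1} \right) = t^n (a a' s^n + b c' s^{-n}) + t^{-n}(c b' s^n + d d' s^{-n}).
\end{equation*}
The dominant term as $n \to \infty$ is $t^n s^n a a'$ (or, depending on whether $|s|>1$ or $|s|<1$, one of the four monomials $t^{\pm n} s^{\pm n}$ times a coefficient). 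The key point is that the relevant coefficient cannot vanish: the hypothesis $\Fix(g) \cap \Fix(h) = \varnothing$ forces the attracting and repelling fixed points of $h$ — which are $B \cdot 0$ and $B \cdot \infty$, i.e.\ the points $[a:c]$ and $[b:d]$ — to be different from $0$ and $\infty$. Unwinding this, $a = (B^{-1})_{22}$-type entries and the entries $a, a', b, c, \ldots$ one needs are nonzero. Concretely, $B \cdot \infty = [a:c] \neq \infty$ means $c \neq 0$; $B \cdot 0 = [b:d] \neq 0$ means $b \neq 0$; and $\infty \notin \{B\cdot 0, B \cdot \infty\}$ gives $a \neq 0$, $d \neq 0$. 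Since $B^{-1}$ is the inverse, $a' = d$, $d' = a$ up to the determinant, so the coefficient $a a'$ of the leading monomial $t^n s^n$ is a nonzero multiple of $ad \neq 0$. Hence one monomial genuinely dominates and $|\Tr(g^n h^n)|^2 \to \infty$, so $g^N h^N$ is loxodromic for $N$ large by the trace criterion recalled at the start of the appendix.

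The main obstacle is the bookkeeping showing the leading coefficient is nonzero: one must be careful that after collecting the four terms $t^{\pm n} s^{\pm n}$ no cancellation occurs among terms of the same growth rate, and that the single genuinely largest term has nonzero coefficient. This is where the disjointness of the fixed point sets is used in an essential way — if $g$ and $h$ shared a fixed point one could have $a = 0$ or $d = 0$ and the argument would collapse (indeed the conclusion can fail, e.g.\ if $g$ and $h$ are commuting loxodromics with a common axis $g^N h^N$ could be elliptic or the identity). Once the nonvanishing is established the rest is immediate from the trace classification. A cleaner alternative, which I would mention as a remark, is the ping-pong / North-South dynamics argument: for $N$ large, $g^N$ and $h^N$ have attracting and repelling fixed points in ``general position'', and a standard ping-pong estimate on disjoint neighbourhoods of the four fixed points shows $g^N h^N$ has an attracting fixed point near that of $g^N$ and a repelling one near that of $h^{-N}$, hence is loxodromic; but the trace computation above is the most self-contained and matches the style of the preceding lemmas.
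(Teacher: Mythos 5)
Your proposal is correct and is essentially the paper's argument: the same expansion of $\Tr(g^n h^n)$ into the four monomials $t^{\pm n}s^{\pm n}$ and the same trace criterion, the only difference being that you deduce the nonvanishing of the leading coefficient directly from $\Fix(g)\cap\Fix(h)=\varnothing$ (all entries of $B$ nonzero), whereas the paper argues contrapositively, showing that boundedness of the trace forces the leading coefficient to vanish and hence a common fixed point. Your small misattribution of which disjointness condition gives $a\neq 0$ (it comes from $B\cdot\infty\neq 0$, not from $\infty\notin\{B\cdot 0,B\cdot\infty\}$) is harmless, since the full hypothesis yields all four nonvanishings you need.
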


\begin{proof}
		Write $g=A\left(\begin{array}{cc}t&0\\0&1/t\end{array}\right)A^{-1},h=B\left(\begin{array}{cc}s&0\\0&1/s\end{array}\right)B^{-1}$, where $|t|>1,|s|>1$ and $A,B\in\PSL_2(\C)$. We may assume that $|t| \geq |s|$. Define $a_i,b_i,c_i,d_i$ by $A^{-1}B=\left(\begin{array}{cc}a_1&b_1\\c_1&d_1\end{array}\right)$ and $B^{-1}A=\left(\begin{array}{cc}a_2&b_2\\c_2&d_2\end{array}\right)$. Then
		
		\begin{align*}
		\Tr(g^nh^n)&=\Tr\left(A\left(\begin{array}{cc}t^n&0\\0&1/t^n\end{array}\right)A^{-1}B\left(\begin{array}{cc}s^n&0\\0&1/s^n\end{array}\right)B^{-1}\right) \\
		&=\Tr\left(\left(\begin{array}{cc}t^n&0\\0&1/t^n\end{array}\right)A^{-1}B\left(\begin{array}{cc}s^n&0\\0&1/s^n\end{array}\right)B^{-1}A\right) \\
		&=\Tr\left(\left(\begin{array}{cc}t^n&0\\0&1/t^n\end{array}\right)\left(\begin{array}{cc}a_1&b_1\\c_1&d_1\end{array}\right)\left(\begin{array}{cc}s^n&0\\0&1/s^n\end{array}\right)\left(\begin{array}{cc}a_2&b_2\\c_2&d_2\end{array}\right)\right) \\
		&=a_1a_2t^ns^n+b_1c_2t^n/s^n+c_1b_2s^n/t^n+d_1d_2/t^ns^n. 
		\end{align*}
		
Suppose for contradiction that for every $n \geq1$, $g^n h^n$ is not loxodromic.  Then $|\Tr(g^nh^n)|$ is a bounded sequence. It follows that $a_1a_2=0$. Without loss of generality, assume $a_1=0$. We get $A^{-1}B= \left(\begin{array}{cc}0&b_1\\-1/b_1&d_1\end{array}\right)$ and $B^{-1}A = (A^{-1}B)^{-1}=\left(\begin{array}{cc}d_1&-b_1\\1/b_1&0\end{array}\right)$, so \begin{align*}
	h &= B\left(\begin{array}{cc}s&0\\0&1/s\end{array}\right)B^{-1} = A\left(\begin{array}{cc}0&b_1\\-1/b_1&d_1\end{array}\right)\left(\begin{array}{cc}s&0\\0&1/s\end{array}\right)\left(\begin{array}{cc}d_1&-b_1\\1/b_1&0\end{array}\right)A^{-1} \\ &= A\left(\begin{array}{cc} \ast &0\\ \ast & \ast\end{array}\right)A^{-1}.    
   \end{align*}	     
This implies that $h$ and $g$ have the same fixed point $A([0:1])$, contradicting the hypothesis  $\Fix(g)\cap\Fix(h)=\varnothing$. This proves the lemma.
	 	\end{proof}

\vskip5pt

For $g \in \PSL_2(\C)$ let $\theta_g(x) : = \log \frac{\| g \cdot v \|}{\| v\|}$, $x=[v]$. Then $\theta_g$ is a smooth function on $\P^1$ and $\|\theta_g\|_\infty = \log \|g\|$. The following estimate was used in Section \ref{sec:CLT}. 

\begin{lemma} \label{lemma:theta_g-estimate}
We have $\|\theta_g\|_{W^{1,2}} \lesssim 1 + \log \|g\|$.
\end{lemma}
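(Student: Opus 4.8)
The plan is to estimate the two pieces of the norm $\|\theta_g\|_{W^{1,2}} = \big|\int_{\P^1}\theta_g\,\omegaFS\big| + \|\partial\theta_g\|_{L^2}$ separately. Since $\theta_g$ is smooth and $\|\theta_g\|_\infty = \log\|g\|$ (as recalled just before the statement), and since $\omegaFS$ is a probability measure, the zeroth-order term satisfies $\big|\int_{\P^1}\theta_g\,\omegaFS\big| \le \log\|g\| \le 1+\log\|g\|$ at once. So the whole problem reduces to proving $\|\partial\theta_g\|_{L^2} \lesssim 1+\log\|g\|$.

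For this I would exploit that $\theta_g$ is, up to normalization, a difference of local potentials. In an affine chart with coordinate $z$ and local holomorphic lift $\hat z = (z,1)$, one has $\theta_g = \log\|g\cdot\hat z\| - \log\|\hat z\|$, and $\log\|\hat z\|$ (resp. $\log\|g\cdot\hat z\|$) is a local potential of $\omegaFS$ (resp. of $g^*\omegaFS$, since $g\cdot\hat z$ is a holomorphic lift of $g\cdot[z]$). This yields the global identity $\ddc\theta_g = g^*\omegaFS - \omegaFS$. Now $\theta_g$ is smooth on the compact surface $\P^1$, so integration by parts gives $\|\partial\theta_g\|_{L^2}^2 = c_0\int_{\P^1}\theta_g\,(\omegaFS - g^*\omegaFS)$ for a universal constant $c_0>0$; since both $\omegaFS$ and $g^*\omegaFS$ are probability measures, this is bounded by $c_0\int_{\P^1}|\theta_g|\,(\omegaFS + g^*\omegaFS) \le 2c_0\,\|\theta_g\|_\infty = 2c_0\log\|g\|$. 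Hence $\|\partial\theta_g\|_{L^2} \lesssim (\log\|g\|)^{1/2} \lesssim 1+\log\|g\|$, and adding the two bounds finishes the proof.

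An alternative, more computational route would be to use Cartan's decomposition $g = k a k'$ with $k,k'\in\SU(2)$ and $a = \diag(\lambda,\lambda^{-1})$, $\lambda = \|g\|$: since $\SU(2)$ acts by holomorphic isometries preserving $\omegaFS$, one checks $\theta_g = \theta_a\circ k'$ and hence $\|\theta_g\|_{W^{1,2}} = \|\theta_a\|_{W^{1,2}}$, reducing everything to the explicit one-variable integral $\int_0^\infty t\,(\lambda^2 t+\lambda^{-2})^{-2}(t+1)^{-2}\,dt$, which is readily estimated by $\lesssim \lambda^{-4}(1+\log\lambda)$, giving again $\|\partial\theta_a\|_{L^2}^2\lesssim 1+\log\|g\|$. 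I expect the main (and essentially only) obstacle to be the harmless bookkeeping of normalization constants in the potential identity $\ddc\theta_g = g^*\omegaFS - \omegaFS$ and in the integration by parts; the rest is routine.
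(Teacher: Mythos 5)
Your proposal is correct, and your primary argument is genuinely different from the paper's. The paper proceeds computationally: it reduces to a diagonal matrix via Cartan's decomposition (using that $\SU(2)$ preserves $\omegaFS$, exactly as in your alternative sketch), computes $i\del\theta_g\wedge\overline{\del\theta_g}$ explicitly in the affine coordinate, and then invokes a separate calculus lemma (Lemma \ref{lemma:omega-g-estimate}) to bound the resulting one-variable integral by a constant times $\log\|g\|$. Your main route instead uses the potential identity $\ddc\theta_g=g^*\omegaFS-\omegaFS$ (valid globally since $\theta_g$ is smooth and $g\cdot\hat z$ is a holomorphic lift of $g(z)$), integrates by parts on the compact surface to get $\|\del\theta_g\|_{L^2}^2=c_0\langle\theta_g,\omegaFS-g^*\omegaFS\rangle$, and concludes from $\|\theta_g\|_\infty=\log\|g\|$ together with the fact that $\omegaFS$ and $g^*\omegaFS$ are both probability measures; this yields the same $(\log\|g\|)^{1/2}$ bound with no explicit computation and no auxiliary integral estimate, and is arguably more robust (it only uses mass preservation under pullback and the sup bound, so it adapts to more general settings), at the cost of the minor normalization bookkeeping in $\ddc$ that you flag. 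Your secondary, computational route is essentially the paper's proof; just note that the displayed one-variable integral should carry the prefactor $(\lambda^2-\lambda^{-2})^2/4\sim\lambda^4$ coming from $|\del\theta_a|^2$, which is precisely what turns your $\lambda^{-4}(1+\log\lambda)$ estimate into the claimed bound $\|\del\theta_a\|_{L^2}^2\lesssim 1+\log\|g\|$, as your conclusion implicitly assumes. Finally, bounding the zeroth-order term by $\|\theta_g\|_\infty$ directly is fine and matches the paper's use of $\|\theta_g\|_{L^1}\leq\log\|g\|$ via Proposition \ref{prop:equinorm}.
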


\begin{proof}
Since $\|\theta_g\|_\infty = \log \|g\|$ it follows that $\| \theta_g \|_{L^1} \leq \log \|g\|$. So, from Proposition \ref{prop:equinorm} we only need to estimate $\|\del \theta_g\|_{L^2}$.

Set $\omega_g := i \del \theta_g \wedge \overline{\del \theta_g}$ so that $\|\del \theta_g\|^2_{L^2} = \int_{\P^1} \omega_g$. By Cartan's decomposition we can write $g = k' a k$ where $k,k' \in \text{SU}(2)$ and $a \in \SL_2(\C)$ is diagonal with positive eigenvalues. Since $k'$ and $k$ preserve the euclidean norm we have $$\theta_g(x) =  \log \frac{\| k'a k \cdot v \|}{\| v\|} =  \log \frac{\| a k \cdot v \|}{\| v\|} = \log \frac{\| a k \cdot v \|}{\|k \cdot v\|} = \theta_a(k \cdot x), $$
that is $\theta_g = k^* \theta_a$. Hence $\omega_g = k^* \omega_a$ and since $\text{SU}(2)$ is compact we have that $\omega_g \sim \omega_a$. This allows us to assume that $g$ is of the form $\left(\begin{smallmatrix} \lambda & 0 \\ 0 & \lambda^{-1} \end{smallmatrix}\right)$, for some $\lambda \geq 1$.

Let $z = [z:1]$ be the standard affine coordinate in $\P^1 \setminus \{\infty\}$. In this coordinate we have $g(z) = \lambda^2 z$, so   $$\theta_g(z) = \frac{1}{2}\log \frac{\lambda^4 |z|^2 + 1}{|z|^2 + 1} = \frac{1}{2}\log (\lambda^4 |z|^2 + 1) - \frac{1}{2}\log( |z|^2 + 1).$$

Hence $$\omega_g=i\del \theta_g \wedge\overline{\partial \theta_g}=\frac{(\lambda^4-1)^2|z|^2}{4(\lambda^4|z|^2+1)^2(|z|^2+1)^2}idz\wedge d\overline z.$$

Then, by Lemma \ref{lemma:omega-g-estimate} below we get $ \int_{\P^1} \omega_g \lesssim \log \lambda^4 = 4 \log \|g\|$. Hence $\|\del \theta_g\|_{L^2} \lesssim (\log\|g\|)^{1/2}$. This, together with the above estimate for $\|\theta_g\|_{L^1}$, implies the lemma.
\end{proof}

\begin{lemma}\label{lemma:omega-g-estimate}
Let $\beta > 1$ and denote by $z$ the standard affine coordinate in $\C \subset \P^1$. Then
$$\int_\C\frac{(\beta-1)^2|z|^2}{(\beta|z|^2+1)^2(|z|^2+1)^2}idz\wedge d\overline z \leq 2 \pi\frac{\beta-1}{\beta+1}\log\beta.$$
\end{lemma}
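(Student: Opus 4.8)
The plan is to pass to an elementary one-variable integral, evaluate it exactly by partial fractions, and then reduce the claimed bound to a standard convexity inequality for $\log$.

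First I would use polar coordinates $z = re^{i\theta}$, together with $i\,dz\wedge d\overline z = 2r\,dr\,d\theta$ and $|z|^2 = r^2$, to rewrite the integral as
\[
4\pi(\beta-1)^2\int_0^\infty\frac{r^3\,dr}{(\beta r^2+1)^2(r^2+1)^2}.
\]
The change of variables $t = r^2$ then reduces this to $2\pi(\beta-1)^2\,I$, where $I := \int_0^\infty\frac{t\,dt}{(\beta t+1)^2(t+1)^2}$.

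Next I would compute $I$ via the partial fraction decomposition
\[
\frac{t}{(\beta t+1)^2(t+1)^2} = \frac{A}{t+1} + \frac{B}{(t+1)^2} + \frac{C}{\beta t+1} + \frac{D}{(\beta t+1)^2},
\]
where $A = -\tfrac{\beta+1}{(\beta-1)^3}$, $B = -\tfrac{1}{(\beta-1)^2}$, $C = \tfrac{\beta(\beta+1)}{(\beta-1)^3}$ and $D = -\tfrac{\beta}{(\beta-1)^2}$ (the coefficients of the double poles are found by the cover-up method, those of the simple poles by differentiation). Since $A + C/\beta = 0$, the two logarithmically divergent integrals combine into a convergent one equal to $-A\log\beta$; together with $\int_0^\infty \tfrac{B}{(t+1)^2}\,dt = B$ and $\int_0^\infty \tfrac{D}{(\beta t+1)^2}\,dt = D/\beta$ this gives
\[
I = \frac{(\beta+1)\log\beta}{(\beta-1)^3} - \frac{2}{(\beta-1)^2}, \qquad \text{hence} \qquad 2\pi(\beta-1)^2\,I = 2\pi\Big(\frac{(\beta+1)\log\beta}{\beta-1} - 2\Big).
\]

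Finally, the desired inequality $2\pi\big(\tfrac{(\beta+1)\log\beta}{\beta-1} - 2\big) \le 2\pi\,\tfrac{\beta-1}{\beta+1}\log\beta$ becomes, after clearing the (positive) denominators and using $(\beta+1)^2 - (\beta-1)^2 = 4\beta$, simply $2\beta\log\beta \le \beta^2 - 1$, that is, $\log\beta \le \tfrac12(\beta - \beta^{-1})$. This holds because $h(\beta) := \tfrac12(\beta - \beta^{-1}) - \log\beta$ satisfies $h(1) = 0$ and $h'(\beta) = \tfrac{(\beta-1)^2}{2\beta^2} \ge 0$ for $\beta \ge 1$. The only point requiring a little care in this otherwise routine computation is the bookkeeping of the cancellation $A + C/\beta = 0$, which is what makes the two individually divergent logarithmic contributions add up to a finite quantity.
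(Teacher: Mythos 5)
Your proof is correct, and it follows a different route from the paper's. The paper does not evaluate the integral exactly: after multiplying by $\tfrac{\beta+1}{\beta-1}$ it enlarges the integrand using $(\beta r^2+1)^2(r^2+1)^2 \geq (\beta^2 r^4+1)(r^4+1)$, and the resulting integral reduces (via $t=r^4$) to the two-term partial fraction $\tfrac{\beta^2-1}{(\beta^2 t+1)(t+1)} = \tfrac{\beta^2}{\beta^2 t+1}-\tfrac{1}{t+1}$, which integrates in one line to $2\pi\log\beta$; this yields the stated bound immediately. You instead compute the original integral exactly, obtaining the value $2\pi\bigl(\tfrac{(\beta+1)\log\beta}{\beta-1}-2\bigr)$ (your partial fraction coefficients, the cancellation $A+C/\beta=0$, and the evaluation all check out), and then reduce the claimed inequality to $\log\beta \leq \tfrac12(\beta-\beta^{-1})$, which you verify by a standard monotonicity argument. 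Your approach costs a full four-term partial fraction decomposition and an extra elementary inequality, but it buys the exact value of the integral, which is strictly sharper than the paper's bound; the paper's trick of degrading the integrand first is shorter and entirely sufficient for the application in Lemma \ref{lemma:theta_g-estimate}, where only the order of growth $O(\log\beta)$ matters.
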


\begin{proof}
Multiplying the integral on  left hand side by $\frac{\beta+1}{\beta-1}$ gives
	\begin{align*}
	&\int\frac{(\beta^2-1)|z|^2}{(\beta|z|^2+1)^2(|z|^2+1)^2}idz\wedge d\overline z\leq \int\frac{(\beta^2-1)|z|^2}{(\beta^2|z|^4+1)(|z|^4+1)}idz\wedge d\overline z \\
	&=\iint \frac{(\beta^2-1)r^2}{(\beta^2 r^4+1)(r^4+1)} 2 r\,d r\, d \theta  =\iint \frac{\beta^2-1}{2(\beta^2 t+1)(t+1)}\,d t \,d\theta \\ &=  \pi \int \Big( \frac{\beta^2}{\beta^2t+1}-\frac{1}{t+1} \Big) \,d t = \pi \left[\log\frac{\beta^2t+1}{t+1}\right]_{0}^{\infty} = 2\pi \log\beta ,
	\end{align*}
giving the desired inequality.	
\end{proof}




\end{document}